\numberwithin{equation}{section}
\def\equationautorefname#1#2\null{%
	(#2\null)%
}
\newcommand{\ZZ}{\mathbb Z}
\newcommand{\NN}{\mathbb N}
\newcommand{\QQ}{\mathbb Q}
\newcommand{\floor}[1]{\left\lfloor{#1}\right\rfloor}
\newcommand{\op}[1]{\operatorname{#1}}
\newcommand{\id}{\operatorname{id}}
\newcommand{\into}{\hookrightarrow}
\newcommand{\onto}{\twoheadrightarrow}
\newtheorem{theorem}{Theorem}[section]
\newtheorem{proposition}[theorem]{Proposition}
\newtheorem{lemma}[theorem]{Lemma}
\newtheorem{corollary}[theorem]{Corollary}
\newtheorem{example}[theorem]{Example}
\newtheorem{remark}[theorem]{Remark}
\newtheorem{definition}[theorem]{Definition}
\numberwithin{equation}{section}
\title{Generalized Periodicity in Group Cohomology}
\author{Nir Elber}
\address{University of California, Berkeley}
\email{nire@berkeley.edu}
\date{\today}
\begin{document}

\maketitle

\begin{abstract}
	\noindent Given a finite group $G$, we introduce ``encoding pairs,'' which are a pair of $G$-modules $M$ and $M'$ equipped with a shifted natural isomorphism between the cohomological functors $H^\bullet(G,\op{Hom}_\ZZ(M,-))$ and $H^\bullet(G,\op{Hom}_\ZZ(M',-))$. Studying these encoding pairs generalizes the theory of periodic cohomology for finite groups, allowing us to generalize the cohomological input of a theorem due to Swan that roughly says that a finite group with periodic cohomology acts feely on some sphere.
\end{abstract}

\setcounter{tocdepth}{4}
\tableofcontents

\section{Introduction}

Throughout this paper, $G$ will be a finite group, and $\widehat H^\bullet(G,-)$ denotes Tate cohomology. When $G=\langle\sigma\rangle$ is a cyclic group of order $n$, it is an amazing feature that there is some $\chi\in\widehat H^2(G,\ZZ)$ granting isomorphisms
\begin{equation}
	(-\cup\chi)\colon\widehat H^0(G,M)\to\widehat H^2(G,M) \label{eq:motivatingcycliccohom}
\end{equation}
for any $G$-module $M$. In fact, it is not too hard to write down $\chi$ as being represented by the ``carrying'' $2$-cocycle
\[\left(\sigma^i,\sigma^j\right)\mapsto\floor{\frac{i+j}n},\]
so \eqref{eq:motivatingcycliccohom} is telling us that we can represent each cohomology class of $\widehat H^2(G,M)$ by a $2$-cocycle of the form
\[\left(\sigma^i,\sigma^j\right)\mapsto\floor{\frac{i+j}n}\alpha\]
for some $\alpha\in M^G$. This ``classification'' of $2$-cocycles in $\widehat H^2(G,M)$
makes the cohomology of cyclic groups easy to work with computationally. For example, it becomes relatively straightforward to classify group extensions of cyclic groups up to isomorphism.

From one perspective, this classification of classes of $2$-cocycles 
for cyclic groups says that we can retrieve all $2$-cocycles by keeping track of the single element $\alpha\in M^G=H^0(G,M)$, modulo some equivalence relation coming from Tate cohomology. The algebraic way to choose a single element of $M^G$ is by elements in
\[\op{Hom}_\ZZ\left(\ZZ,M^G\right)=\op{Hom}_{\ZZ[G]}(\ZZ,M)=\op{Hom}_\ZZ(\ZZ,M)^G.\]
As such, one can phrase \eqref{eq:motivatingcycliccohom} as providing a natural isomorphism of cohomology functors
\[(-\cup\chi)\colon\widehat H^0(G,\op{Hom}_\ZZ(\ZZ,-))\Rightarrow\widehat H^2(G,-).\]
Roughly speaking, we see that the $G$-module $\ZZ$ is doing some ``encoding'': the data of a $2$-cocycle class in $\widehat H^2(G,M)$ is directly translated into an element of $\widehat H^0(G,\op{Hom}_\ZZ(\ZZ,M))$. This Tate cohomology group is just made of morphisms $\ZZ\to M$ (fixed by the $G$-action) modulo an equivalence relation coming from the definition of Tate cohomology.

More generally, permit $G$ to be non-cyclic, and fix some $G$-module $\ZZ[G]^m/I$ for some $m\ge0$ and $G$-submodule $I\subseteq\ZZ[G]^m$. Now, an isomorphism
\[\widehat H^0\left(G,\op{Hom}_\ZZ(\ZZ[G]^m/I,M)\right)\to\widehat H^2(G,M),\]
for some $G$-module $M$ is still encoding $2$-cocycles into morphisms, where these morphisms look like
\[\widehat H^0\left(G,\op{Hom}_\ZZ(\ZZ[G]^m/I,M)\right)=\frac{\op{Hom}_{\ZZ[G]}\left(\ZZ[G]^m/I,M\right)}{N_G\op{Hom}_\ZZ(\ZZ[G]^m/I,M)}.\]
To see the encoding here, view the numerator as picking 
an $m$-tuple of elements of $M$ in the same way that we would choose morphisms $\op{Hom}_{\ZZ[G]}\left(\ZZ[G]^m,M\right)$, but we cannot 
just choose any $m$ elements because they must satisfy some relations dictated by $I$. Then the denominator provides an equivalence relation of the $m$-tuples which determines if two tuples live in the same ``class.''

The above discussion is intended to motivate the following definition.
\begin{definition}
	Let $G$ be a finite group and $r\in\ZZ$. 
	Then a pair of $G$-modules $(X,X')$ is an \textit{$r$-encoding pair} if and only if there is a natural isomorphism of functors 
	\[\widehat H^i(G,\op{Hom}_\ZZ(X,-))\simeq\widehat H^{i+r}(G,\op{Hom}_\ZZ(X',-))\]
	for some $i\in\ZZ$. If $X'=\ZZ$, we call $X$ an \textit{$r$-encoding module}. 
\end{definition}
\begin{remark}
    In the case where $X$ is $\ZZ$-free and $i>0$, we have $\widehat H^i(G,\op{Hom}_\ZZ(X,-))\simeq\op{Ext}^i_{\ZZ[G]}(X,-)$ by \cite[Proposition~III.2.2]{brown-cohomology}. For this reason, we will occasionally specialize to the case where $X$ and $X'$ are $\ZZ$-free, but we will avoid doing so when possible.
\end{remark}
The index $i\in\ZZ$ is more or less irrelevant in the above definitions. Indeed, our main theorem shows the following equivalences.
\begin{theorem} \label{thm:main}
	Let $G$ be a finite group. Given $G$-modules $X$ and $X'$ and $r\in\ZZ$, the following are equivalent.
	\begin{enumerate}
		\item[(a)] $(X,X')$ is an $r$-encoding pair.
		\item[(b)] If $r\ge0$, then $X$ is cohomologically equivalent to $X'\otimes_\ZZ I_G^{\otimes r}$. If $r<0$, then $X$ is cohomologically equivalent to $X'\otimes_\ZZ\op{Hom}_\ZZ(I_G^{\otimes r},\ZZ)$. (``Cohomological equivalence'' will be defined in \Cref{subsec:cohom-equiv}.)
		\item[(c)] There is $x\in\widehat H^r(G,\op{Hom}_\ZZ(X',X))$ granting a natural isomorphism
		\[(-\cup x)\colon\widehat H^i(G,\op{Hom}_\ZZ(X,-))\Rightarrow\widehat H^{i+r}(G,\op{Hom}_\ZZ(X',-))\]
		for all $i\in\ZZ$.
		\item[(d)] There are $x\in\widehat H^r(G,\op{Hom}_\ZZ(X',X))$ and $x'\in\widehat H^{-r}(G,\op{Hom}_\ZZ(X,X'))$ such that
		\[x'\cup x=[{\id_{X'}}]\in\widehat H^0(G,\op{Hom}_\ZZ(X',X'))\qquad\text{and}\qquad x\cup x'=[{\id_X}]\in\widehat H^0(G,\op{Hom}_\ZZ(X,X)).\]
	\end{enumerate}
	If $X$ and $X'$ are finitely generated $G$-modules, (a) is equivalent to each of
	the following.
	\begin{enumerate}
	    \item[(e)] For some $i\in\ZZ$, there is a natural isomorphism
		\[\widehat H^i(G,X'\otimes_\ZZ-)\Rightarrow\widehat H^{i+r}(G,X\otimes_\ZZ-).\]
		\item[(f)] There is $x\in\widehat H^r(G,\op{Hom}_\ZZ(X',X))$ granting natural isomorphisms
		\[(x\cup-)\colon\widehat H^i(G,X'\otimes_\ZZ-)\Rightarrow\widehat H^{i+r}(G,X\otimes_\ZZ-)\]
		for all $i\in\ZZ$.
	\end{enumerate}
	If $X$ is $\ZZ$-free and $X'=\ZZ$, (a) is equivalent to
	\begin{enumerate}
		\item[(g)] $\widehat H^r(G,X)\cong\ZZ/|G|\ZZ$ 
		and $\widehat H^0(G,\op{Hom}_\ZZ(X,X))$ is cyclic.
	\end{enumerate}
\end{theorem}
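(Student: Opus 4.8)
The plan is to route everything through the \emph{cohomology category} $\underline{\mathcal C}$: its objects are $G$-modules, $\hom_{\underline{\mathcal C}}(Y,Z):=\widehat H^0(G,\op{Hom}_\ZZ(Y,Z))$, composition is the cup product attached to the composition pairing $\op{Hom}_\ZZ(Y,Z)\otimes_\ZZ\op{Hom}_\ZZ(Z,W)\to\op{Hom}_\ZZ(Y,W)$, and the identity of $Y$ is $[\id_Y]$; associativity and the unit law are part of the standard cup-product formalism. The technical heart I would isolate is a \textbf{Yoneda-type lemma}: for all $G$-modules $X,X'$ and all $i,r\in\ZZ$, the map $x\mapsto(-\cup x)$ is a bijection from $\widehat H^r(G,\op{Hom}_\ZZ(X',X))$ onto the group of natural transformations $\widehat H^i(G,\op{Hom}_\ZZ(X,-))\Rightarrow\widehat H^{i+r}(G,\op{Hom}_\ZZ(X',-))$. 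Granting this, (c) $\Rightarrow$ (a) is immediate; from (a) the isomorphism there is $(-\cup x)$ for a unique $x$ and its inverse is $(-\cup x')$ for a unique $x'$, and feeding the relations $(-\cup x')\circ(-\cup x)=\id$ and $(-\cup x)\circ(-\cup x')=\id$ back through associativity and the injectivity half of the lemma gives $x'\cup x=[\id_{X'}]$ and $x\cup x'=[\id_X]$, i.e.\ (d); and (d) $\Rightarrow$ (c) is the formal remark that $(-\cup x)$ and $(-\cup x')$ are then mutually inverse in every degree. For (b) $\Leftrightarrow$ (d), the sequence $0\to I_G\otimes_\ZZ(-)\to\ZZ[G]\otimes_\ZZ(-)\to(-)\to0$, which stays exact on all $G$-modules since $\ZZ$ is flat over $\ZZ$, gives a dimension shift identifying $\widehat H^r(G,\op{Hom}_\ZZ(X',X))$ with $\hom_{\underline{\mathcal C}}(X'\otimes_\ZZ I_G^{\otimes r},X)$ for $r\ge0$ (and the evident dual group for $r<0$), under which (d) becomes the statement that $X$ and $X'\otimes_\ZZ I_G^{\otimes r}$ are isomorphic in $\underline{\mathcal C}$ — which unwinds the notion of cohomological equivalence of \Cref{subsec:cohom-equiv}.

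To prove the Yoneda lemma, first suppose $X$ and $X'$ are $\ZZ$-free. Since $\op{Hom}_\ZZ(X,-)$ is then exact and always carries (co)induced modules to cohomologically trivial ones (using $\op{Ind}_1^G\cong\op{Coind}_1^G$ for finite $G$), the functor $\widehat H^\bullet(G,\op{Hom}_\ZZ(X,-))$ and its $X'$-analogue are Tate cohomological functors in the argument variable. Hence a natural transformation at degree $i$ extends uniquely to a morphism of such $\delta$-functors by dimension-shifting $M$, and such a morphism is pinned down by its degree-$0$ component, which is a natural transformation out of $M\mapsto\widehat H^0(G,\op{Hom}_\ZZ(X,M))$, a quotient of the representable $\hom_{\ZZ[G]}(X,-)$. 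Ordinary Yoneda shows this component is determined by the image of $[\id_X]$, and since every class is $[f]=f_*[\id_X]$ it equals $(-\cup x)$ with $x$ that image; because the cup product is itself a morphism of $\delta$-functors, $(-\cup x)$ reproduces the transformation in every degree.

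The \textbf{main obstacle} is the general case: when $X$ is not $\ZZ$-free, $\widehat H^\bullet(G,\op{Hom}_\ZZ(X,-))$ fails to be a cohomological functor (its defect is governed by $\op{Ext}^1_\ZZ(X,-)$), so the reduction to degree $0$ breaks, and I would instead exhibit $\widehat H^i(G,\op{Hom}_\ZZ(X,-))$ as a quotient of a \emph{representable} functor in every degree. Fix a complete resolution $T_\bullet\to\ZZ$ by finitely generated free $\ZZ[G]$-modules and put $Z_{i-1}=\im(T_i\to T_{i-1})$, a finitely generated $\ZZ$-free $G$-module. The adjunction $\hom_{\ZZ[G]}(T_\bullet,\op{Hom}_\ZZ(X,M))=\hom_{\ZZ[G]}(T_\bullet\otimes_\ZZ X,M)$ together with $\op{coker}(T_{i+1}\otimes_\ZZ X\to T_i\otimes_\ZZ X)\cong Z_{i-1}\otimes_\ZZ X$ — valid since $\ZZ$ is hereditary, so the relevant $\op{Tor}$'s vanish — presents the group of $i$-cocycles as $\hom_{\ZZ[G]}(Z_{i-1}\otimes_\ZZ X,M)$, whence $\widehat H^i(G,\op{Hom}_\ZZ(X,-))$ is a quotient of $\hom_{\ZZ[G]}(Z_{i-1}\otimes_\ZZ X,-)$. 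A natural transformation then pulls back, by ordinary Yoneda, to a class in $\widehat H^{i+r}(G,\op{Hom}_\ZZ(X',Z_{i-1}\otimes_\ZZ X))$, and since $Z_{i-1}$ is finitely generated $\ZZ$-free this group is canonically $\widehat H^{i+r}(G,Z_{i-1}\otimes_\ZZ\op{Hom}_\ZZ(X',X))\cong\widehat H^r(G,\op{Hom}_\ZZ(X',X))$ by the dimension-shift property of the syzygy $Z_{i-1}$. The one delicate point I expect to have to be careful about is tracking the tautological class through these identifications — it amounts to the compatibility of cup products with the connecting maps of the shift — after which the transformation is seen to be exactly $(-\cup x)$, completing the Yoneda lemma and hence (a)--(d).

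For (e)--(f) (with $X,X'$ finitely generated) I would first note that all functors in sight commute with filtered colimits (a complete resolution by finitely generated modules exists because $G$ is finite), so it suffices to test the transformations on finitely generated $M$; then Pontryagin duality gives a natural isomorphism $\op{Hom}_\ZZ(\op{Hom}_\ZZ(X,M),\QQ/\ZZ)\cong X\otimes_\ZZ\op{Hom}_\ZZ(M,\QQ/\ZZ)$ for finitely generated $X$, and combining it with Tate duality $\widehat H^i(G,A)^{\vee}\cong\widehat H^{-i-1}(G,\op{Hom}_\ZZ(A,\QQ/\ZZ))$ turns the $\op{Hom}$-statement (c) into the $\otimes$-statement (f) carried by the \emph{same} class $x$, and conversely. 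For (g) (so $X$ is $\ZZ$-free and $X'=\ZZ$): evaluating (c) at $M=\ZZ$, $i=-r$ gives $\widehat H^{-r}(G,\op{Hom}_\ZZ(X,\ZZ))\cong\widehat H^0(G,\ZZ)\cong\ZZ/|G|\ZZ$, and at $M=X$, $i=0$ gives $\widehat H^0(G,\op{Hom}_\ZZ(X,X))\cong\widehat H^r(G,X)$; the perfect cup-product pairing $\widehat H^{-r}(G,\op{Hom}_\ZZ(X,\ZZ))\times\widehat H^r(G,X)\to\widehat H^0(G,\ZZ)$ then forces $\widehat H^r(G,X)\cong\ZZ/|G|\ZZ$, so $\widehat H^0(G,\op{Hom}_\ZZ(X,X))$ is cyclic, proving (a) $\Rightarrow$ (g). Conversely, since every Tate cohomology group of $G$ is killed by $|G|$, a cyclic group admitting a surjection onto $\ZZ/|G|\ZZ$ is isomorphic to it; choosing a generator $x$ of $\widehat H^r(G,X)\cong\ZZ/|G|\ZZ$, the map $-\cup x\colon\widehat H^0(G,\op{Hom}_\ZZ(X,X))\to\widehat H^r(G,X)$ sends $[\id_X]\mapsto x$, hence is surjective, hence (by the previous remark applied to the cyclic group $\widehat H^0(G,\op{Hom}_\ZZ(X,X))$) an isomorphism; a short computation with this isomorphism and the perfect pairing above produces $x'\in\widehat H^{-r}(G,\op{Hom}_\ZZ(X,\ZZ))$ with $x\cup x'=[\id_X]$, and then $x'\cup x=[\id_\ZZ]$ follows by comparing $(x\cup x')\cup x=x\cup(x'\cup x)$ with $[\id_X]\cup x=x$, which gives (d) and hence (a).
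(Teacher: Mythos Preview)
Your overall architecture matches the paper's: the Yoneda-type lemma (the paper's \Cref{prop:allnaturaltransarecups}) is indeed the engine, and (a)--(d) follow formally from it together with the encoding properties of $I_G^{\otimes r}$ and its dual; your handling of (e)--(f) via Tate/Pontryagin duality and of (g) via the integral cup-product pairing are essentially the paper's \Cref{prop:encode-to-tensor}, \Cref{prop:tensor-to-encode}, and \Cref{prop:finitecohomcheck}.

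The one place you are working harder than necessary is your ``main obstacle.'' You worry that $\widehat H^\bullet(G,\op{Hom}_\ZZ(X,-))$ fails to be a $\delta$-functor when $X$ has $\ZZ$-torsion, and you compensate with a complete-resolution argument presenting $\widehat H^i(G,\op{Hom}_\ZZ(X,-))$ as a quotient of the representable $\hom_{\ZZ[G]}(Z_{i-1}\otimes_\ZZ X,-)$. This works (the key point, which you leave implicit, is that $T_{i-1}\otimes_\ZZ X$ is induced, so \emph{every} element of $\widehat H^{i+r}(G,\op{Hom}_\ZZ(X',Z_{i-1}\otimes_\ZZ X))$ descends to a natural transformation, not just a subgroup). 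But the paper sidesteps the obstacle entirely: the dimension-shifting sequences
\[0\to I_G\otimes_\ZZ A\to\ZZ[G]\otimes_\ZZ A\to A\to0\qquad\text{and}\qquad 0\to A\to\op{Hom}_\ZZ(\ZZ[G],A)\to\op{Hom}_\ZZ(I_G,A)\to0\]
are \emph{$\ZZ$-split}, and $\op{Hom}_\ZZ(X,-)$, being additive, takes $\ZZ$-split short exact sequences to short exact sequences regardless of whether $X$ is $\ZZ$-free. Since $\op{Hom}_\ZZ(X,-)$ also carries induced modules to induced modules, these two sequences alone suffice to shift any natural transformation between $\widehat H^i(G,\op{Hom}_\ZZ(X,-))$ and $\widehat H^{i+r}(G,\op{Hom}_\ZZ(X',-))$ to degree $0$ and back, with no hypothesis on $X$ or $X'$. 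The paper packages this as the notion of a ``shiftable functor'' and proves the Yoneda lemma uniformly: \Cref{lem:naturaltransiscupping} handles degree $0$ by ordinary Yoneda, and \Cref{cor:cupup,cor:cupdown} together with \Cref{prop:dimshiftcupisos} do the shifting. Your route reaches the same conclusion but through an extra layer of machinery; the insight you are missing is that one never needs the full $\delta$-functor property, only exactness on the two canonical $\ZZ$-split sequences.
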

\begin{proof}
	The equivalence of (a), (c), and (d) are from \Cref{prop:better-encoding-pair}. The equivalence of (a) and (b) follows from combining \Cref{lem:encoding-pair-operations,ex:ig-encodes,ex:allencoderesexist}. The equivalence of (a), (e), and (f) follows from \Cref{prop:encode-to-tensor,prop:tensor-to-encode}.
	Lastly, the equivalence of (a) and (g) follows from \Cref{prop:finitecohomcheck}.
\end{proof}
\begin{remark}
    The ``twisting'' in part (e) of \Cref{thm:main} is akin to the twisting introduced in \cite{twisted-periodic-cohomology}.
\end{remark}

When we may take $X=X'=\ZZ$ (for example, when $G$ is cyclic), we are essentially studying groups with periodic cohomology, so many results and arguments in this paper will mimic those of periodic cohomology. However, periodic cohomology requires somewhat stringent conditions on the group itself, and allowing this ``free parameter'' $X$ will permit general groups at the cost of a perhaps more complex $X$. For example, when $r\ge0$, we can take $X=I_G^{\otimes r}$ and $X'=\ZZ$ for any finite group $G$ (as seen from \Cref{thm:main}), though this $G$-module is quite rough to handle.

We note that encoding pairs always arise ``for a good reason.'' Rigorously, in the $\ZZ$-free case, encoding pairs always arise from projective resolutions, in the following sense.
\begin{theorem} \label{thm:reason-for-encoding}
    Let $G$ be a group, and let $X$ and $X'$ be $\ZZ$-free $G$-modules. Given a positive integer $r$, the pair $(X,X')$ is an $r$-encoding pair if and only if there is an exact sequence
    \begin{equation}
        0\to X\to P_r\to P_{r-1}\to\cdots\to P_2\to P_1\to X'\to0, \label{eq:desired-es}
    \end{equation}
    where each $P_i$ is $\ZZ[G]$-projective.
\end{theorem}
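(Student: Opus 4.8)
The plan is to prove the two implications separately. For the ``if'' direction I would argue by dimension shifting: given the exact sequence \eqref{eq:desired-es}, every $\ZZ[G]$-projective module is $\ZZ$-free, so the sequence is $\ZZ$-split in each degree and applying $\op{Hom}_\ZZ(-,M)$ to it preserves exactness for an arbitrary $G$-module $M$. Each $\op{Hom}_\ZZ(P_i,M)$ is a direct summand of a module of the form $\op{Hom}_\ZZ(\ZZ[G],N)$ and hence is cohomologically trivial, so splitting the resulting long exact sequence into short exact sequences of $G$-modules and iterating the connecting homomorphism $r$ times yields natural isomorphisms $\widehat H^i(G,\op{Hom}_\ZZ(X,M))\cong\widehat H^{i+r}(G,\op{Hom}_\ZZ(X',M))$ in $i$ and $M$; hence $(X,X')$ is an $r$-encoding pair.

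For the ``only if'' direction, suppose $(X,X')$ is an $r$-encoding pair, fix a projective resolution $\cdots\to P_1\to P_0\to X'\to0$, and let $\Omega$ be its $r$-th syzygy, so there is an exact sequence $0\to\Omega\xrightarrow{\iota}P_{r-1}\to\cdots\to P_0\to X'\to0$; as a submodule of the $\ZZ$-free module $P_{r-1}$, $\Omega$ is $\ZZ$-free. The first real step is to replace the abstract encoding isomorphism by an honest module map $\phi\colon\Omega\to X$. Dimension shifting gives a natural isomorphism $\delta\colon\widehat H^i(G,\op{Hom}_\ZZ(\Omega,-))\xrightarrow{\sim}\widehat H^{i+r}(G,\op{Hom}_\ZZ(X',-))$; by \Cref{thm:main}(c) the encoding isomorphism is cup product with some $x\in\widehat H^r(G,\op{Hom}_\ZZ(X',X))$, and I would let $\phi$ be a $\ZZ[G]$-linear representative of the class $\delta^{-1}(x)\in\widehat H^0(G,\op{Hom}_\ZZ(\Omega,X))$. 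Matching the Yoneda (connecting-homomorphism) description of $\delta$ with associativity of the cup product should identify the encoding isomorphism $\xi\mapsto\xi\cup x$ with $\xi\mapsto\delta(\phi^*\xi)$, and since $\delta$ is an isomorphism this forces $\phi^*\colon\widehat H^i(G,\op{Hom}_\ZZ(X,M))\to\widehat H^i(G,\op{Hom}_\ZZ(\Omega,M))$ to be an isomorphism for every $i$ and every $M$.

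The second step is to turn $\phi$ into the required resolution via a pushout. Set $Y\coloneqq P_{r-1}\sqcup_\Omega X$, the pushout of $\iota$ along $\phi$; this produces short exact sequences $0\to\Omega\to P_{r-1}\oplus X\to Y\to0$ and $0\to X\to Y\to\Omega'\to0$, where $\Omega'\coloneqq\op{coker}(\iota)$ is the $(r-1)$-st syzygy (to be read as $X'$ when $r=1$). The second sequence is $\ZZ$-split, so $Y$ is $\ZZ$-free; feeding the first sequence into a long exact sequence and using that $\phi^*$ is a cohomology isomorphism and $\op{Hom}_\ZZ(P_{r-1},M)$ is cohomologically trivial gives $\widehat H^i(G,\op{Hom}_\ZZ(Y,M))=0$ for all $i$ and all $M$. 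Taking $M=Y$ shows $[\id_Y]=0$ in $\widehat H^0(G,\op{Hom}_\ZZ(Y,Y))=\op{Hom}_{\ZZ[G]}(Y,Y)/N_G\op{Hom}_\ZZ(Y,Y)$, so $\id_Y=N_G(s)$ for some $\ZZ$-linear $s\colon Y\to Y$; this factors $\id_Y$ through the free $\ZZ[G]$-module $\ZZ[G]\otimes_\ZZ Y$, so $Y$ is $\ZZ[G]$-projective. Splicing $0\to X\to Y\to\Omega'\to0$ onto the tail $0\to\Omega'\to P_{r-2}\to\cdots\to P_0\to X'\to0$ of the resolution then gives the exact sequence \eqref{eq:desired-es}, with projective terms $Y,P_{r-2},\dots,P_0$.

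I expect the main obstacle to be the bookkeeping in the first step of the ``only if'' direction: producing the genuine module map $\phi$ whose precomposition realizes the encoding isomorphism requires care about identifying $\delta$ with a cup product and about the (contravariant) variance of $\op{Hom}_\ZZ(-,X)$. The passage from ``$\op{Hom}_\ZZ(Y,-)$ cohomologically trivial'' to ``$Y$ is $\ZZ[G]$-projective'' is the other point to watch, since no finite-generation hypothesis is available; the factorization $\id_Y=N_G(s)$ is exactly what circumvents that.
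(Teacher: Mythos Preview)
Your ``if'' direction matches the paper's: the paper phrases it as inductively applying \Cref{prop:encodingses}, which is exactly your dimension-shifting argument.

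For the ``only if'' direction, however, you take a genuinely different route. The paper's proof is a two-line reduction: since $X$ and $X'$ are $\ZZ$-free, \cite[Proposition~III.2.2]{brown-cohomology} identifies $\widehat H^i(G,\op{Hom}_\ZZ(X,-))$ with $\op{Ext}^i_{\ZZ[G]}(X,-)$ for $i\ge0$, so the encoding cup product $(-\cup x)$ becomes a natural transformation $\op{Ext}^i_{\ZZ[G]}(X,-)\Rightarrow\op{Ext}^{i+r}_{\ZZ[G]}(X',-)$ that is an isomorphism for $i>0$ and epic for $i=0$; the paper then simply invokes \cite[Theorem~1.2]{wall-almost-conjecture} to produce \eqref{eq:desired-es}. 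Your argument instead unpacks what is essentially a proof of Wall's result in this setting: you build the $r$th syzygy $\Omega$ of $X'$, realize the encoding element as a genuine map $\phi\colon\Omega\to X$ via $\delta^{-1}$, use associativity of cup products to see that $\phi^*=(-\cup[\phi])$ is a natural isomorphism on Tate cohomology (so \Cref{prop:dimshiftcupisos} propagates this from degree $0$ to all degrees), and then push out along $\phi$ to get a $\ZZ$-free $Y$ with $\widehat H^0(G,\op{Hom}_\ZZ(Y,Y))=0$, which you correctly promote to $\ZZ[G]$-projectivity via the norm-factorization trick (this is the content of \Cref{cor:cohom-zero-to-zg-proj}). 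The trade-off is clear: the paper's version is short but outsources the geometric content to Wall, while yours is self-contained and makes the construction of \eqref{eq:desired-es} explicit. Your flagged ``bookkeeping'' worry is real but manageable: once you recognize that both $\delta$ and $\phi^*$ are cup products (by \Cref{prop:allnaturaltransarecups} and by inspection, respectively), the identity $\delta\circ\phi^*=(-\cup x)$ at degree $0$ is just $[\phi]\cup c=x$, and \Cref{prop:dimshiftcupisos} handles the rest. One small point: you cite \Cref{thm:main}(c) for the encoding element, but that theorem's proof routes through \Cref{prop:better-encoding-pair}, which is logically prior to \Cref{thm:reason-for-encoding}; citing \Cref{prop:better-encoding-pair} directly avoids any appearance of circularity.
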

We prove \Cref{thm:reason-for-encoding} in \Cref{subsec:free-res}.
\begin{remark}
    \Cref{thm:reason-for-encoding} is shown in the case of $X=X'=\ZZ$ in \cite[Theorem~4.1]{swan-periodic-resolutions}. This result is the main cohomological input to of Swan's \cite[Theorem~A]{swan-periodic-resolutions} that says any finite group with periodic cohomology acts freely on a finite simplicial complex with the same homotopy type as a sphere. Note that groups with free actions on spheres automatically have periodic cohomology; see, for example, \cite[p.~154]{brown-cohomology}.
\end{remark}

\section{General Shifting}

In this section, we establish some abstract results about shifting various functors and then apply them to begin the theory of encoding pairs.

\subsection{Shiftable Functors}
The main point of this subsection is to set up theory around ``shiftable functors,'' whose main application will be in the proofs of \Cref{cor:cupup,cor:cupdown}. To introduce this definition, we require the notion of an induced module.
\begin{definition}
    Fix a finite group $G$. Then a $G$-module $M$ is \textit{induced} if and only if $M\cong\op{Hom}_{\ZZ}(\ZZ[G],A)$ or $M\cong\ZZ[G]\otimes_\ZZ A$ for some abelian group $A$. Note that $\op{Hom}_{\ZZ}(\ZZ[G],A)\cong\ZZ[G]\otimes_\ZZ A$ by \cite[Proposition~III.5.9]{brown-cohomology}.
\end{definition}
And now here is the definition of a shiftable functor.
\begin{definition}
    Let $G$ be a finite group. Then a covariant functor $F\colon\op{Mod}_G\to\op{Mod}_G$ is a \textit{shiftable functor} if and only if $F$ is satisfies the following properties.
    \begin{itemize}
        \item The functor $F$ sends $\ZZ$-split short exact sequences of $G$-modules to short exact sequences of $G$-modules.
        \item The functor $F$ sends induced modules to induced modules.
    \end{itemize}
\end{definition}
The main point to shiftable functors $F$ is that the dimension-shifting $\ZZ$-split short exact sequences
\[\arraycolsep=1.4pt\begin{array}{ccccccccc}
	0 &\to& I_G\otimes_\ZZ A &\to& \ZZ[G]\otimes_\ZZ A &\to& A &\to& 0 \\
	0 &\to& A &\to& \op{Hom}_\ZZ(\ZZ[G],A) &\to& \op{Hom}_\ZZ(I_G,A) &\to& 0
\end{array}\]
will remain exact upon applying $F$, and the middle term will remain induced.
\begin{example}
    Let $G$ be a finite group and $X$ a $G$-module. Then the functors $\op{Hom}_\ZZ(X,-)$ and $X\otimes_\ZZ-$ are shiftable. We will check that $\op{Hom}_\ZZ(X,-)$ is shiftable and omit the check for $X\otimes_\ZZ-$.
    \begin{itemize}
        \item This functor is additive over $\ZZ$-modules, so it will take a $\ZZ$-split short exact sequence of $G$-modules to another $\ZZ$-split short exact sequence of $G$-modules.
        \item Given an induced module $M=\op{Hom}_\ZZ(\ZZ[G],A)$, then
    	\[\op{Hom}_\ZZ(X,M)\cong\op{Hom}_\ZZ(X,\op{Hom}_\ZZ(\ZZ[G],A))\cong\op{Hom}_\ZZ(X\otimes_\ZZ\ZZ[G],A)\cong\op{Hom}_\ZZ(\ZZ[G],\op{Hom}_\ZZ(X,A)),\]
    	so $\op{Hom}_\ZZ(X,M)$ is induced.
    \end{itemize}
\end{example}
A key property of shiftable functors is how we will be able to relate them to each other via cup products. With this in mind, we have the following definition.
\begin{definition}
	Let $G$ be a finite group. Then we define a \textit{shifting pair} $(F,F',X,\eta)$
	to be a pair of shiftable functors $F$ and $F'$ equipped with a $G$-module $X$ and a natural transformation
	\[\eta\colon F\otimes_\ZZ X\Rightarrow F'.\]
\end{definition}
\begin{example} \label{ex:shiftingpair}
	Given $G$-modules $X$ and $X'$, there is a canonical composition map
	\[\arraycolsep=1.4pt\begin{array}{crllcc}
		\eta\colon&\op{Hom}_\ZZ(X,-)&\otimes_\ZZ&\op{Hom}_\ZZ(X',X)&\Rightarrow&\op{Hom}_\ZZ(X',-) \\
		& f &\otimes& \varphi &\mapsto& f\circ\varphi
	\end{array}\]
	so $(\op{Hom}_\ZZ(X,-),\op{Hom}_\ZZ(X',-),\op{Hom}_\ZZ(X',X),\eta)$ is a shifting pair.
\end{example}
Having a natural transformation lets us define natural cup-product maps. In particular, given a shifting pair $(F,F',X,\eta)$, a $G$-module $A$, and integers $r,s\in\ZZ$, we can write down the composite map
\begin{equation}
    \widehat H^r(G,X)\otimes_\ZZ\widehat H^s(G,FA)\stackrel\cup\to\widehat H^{r+s}(G,X\otimes FA)\stackrel\eta\to\widehat H^{r+s}(G,F'A). \label{eq:cup-prod-induced-by-eta}
\end{equation}
We call this composite the ``cup-product map induced by $\eta$.''
\begin{lemma} \label{lem:cuppingisnatural}
	Let $G$ be a finite group, and let $(F,F',X,\eta)$ be a shifting pair. Then, given $r,s\in\ZZ$ and $c\in\widehat H^r(G,X)$, the cup-product maps
	\[(-\cup c)\colon\widehat H^s(G,F-)\Rightarrow\widehat H^{r+s}(G,F'-)\]
	induced by $\eta$ make a natural transformation of cohomology functors.
\end{lemma}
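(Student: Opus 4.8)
The plan is to unpack the phrase ``natural transformation of cohomology functors'' into the two conditions it entails and verify each in turn. Recall from \eqref{eq:cup-prod-induced-by-eta} that, for a $G$-module $A$, the map $(-\cup c)$ is the composite of the Tate cup product
\[
	(-\cup c)\colon\widehat H^s(G,FA)\to\widehat H^{r+s}(G,FA\otimes_\ZZ X)
\]
with the map $\widehat H^{r+s}(G,FA\otimes_\ZZ X)\to\widehat H^{r+s}(G,F'A)$ induced by $\eta_A$. So the content is that these maps (i) are natural in $A$ and (ii) commute, up to sign, with the connecting homomorphisms attached to $\ZZ$-split short exact sequences of $G$-modules; here we use that $F$ and $F'$, being shiftable, send such a sequence to a short exact sequence of $G$-modules, which then carries a long exact sequence in Tate cohomology because a complete resolution of $\ZZ$ is built from projective $\ZZ[G]$-modules.

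For (i), I would fix a map $g\colon A\to B$ of $G$-modules. Since the Tate cup product is natural in its module argument, the square comparing $(-\cup c)$ over $A$ with $(-\cup c)$ over $B$ — with left vertical map induced by $Fg$ and right vertical map induced by $Fg\otimes\id_X$ — commutes; and $\eta$ being a natural transformation gives $\eta_B\circ(Fg\otimes\id_X)=F'g\circ\eta_A$, so pasting the two commuting squares yields the naturality of $(-\cup c)$. This step is purely formal.

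Step (ii) is the crux. Fixing a $\ZZ$-split short exact sequence $0\to A\to B\to C\to0$ and applying the shiftable functors $F$ and $F'$ gives short exact sequences $0\to FA\to FB\to FC\to0$ and $0\to F'A\to F'B\to F'C\to0$, with connecting homomorphisms $\delta_F$ and $\delta_{F'}$ in Tate cohomology, and the goal is the identity $\delta_{F'}\circ(-\cup c)=(-1)^r\,(-\cup c)\circ\delta_F$. I would obtain this by combining two inputs. First, the standard fact that cupping on the right with a fixed class of degree $r$ commutes up to the sign $(-1)^r$ with connecting homomorphisms, applied to $0\to FA\otimes_\ZZ X\to FB\otimes_\ZZ X\to FC\otimes_\ZZ X\to0$: this sequence is exact because in the cases that occur (e.g.\ $F=\op{Hom}_\ZZ(Y,-)$, and in particular in the applications \Cref{cor:cupup,cor:cupdown}, where only the dimension-shifting sequences appear) the functor $F$ in fact carries a $\ZZ$-split sequence to a $\ZZ$-split sequence, which stays exact after $-\otimes_\ZZ X$; for a fully general shiftable $F$ one argues instead at the cochain level, representing cup products via a $G$-equivariant diagonal approximation on a complete resolution, where the signed identity is a direct computation. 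Second, the naturality of $\eta\colon F\otimes_\ZZ X\Rightarrow F'$, which makes it a morphism from $0\to FA\otimes_\ZZ X\to FB\otimes_\ZZ X\to FC\otimes_\ZZ X\to0$ to $0\to F'A\to F'B\to F'C\to0$ and hence intertwines the corresponding connecting homomorphisms.

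I expect the genuine obstacle to be bookkeeping rather than ideas: keeping track of the sign $(-1)^r$, and — if one wants the statement for an arbitrary shiftable $F$ rather than the $\op{Hom}_\ZZ(Y,-)$ and $Y\otimes_\ZZ-$ that actually arise — handling the fact that $-\otimes_\ZZ X$ need not be exact, which forces the cochain-level argument with a complete resolution and a diagonal approximation in place of a citation. Beyond that, everything reduces to the naturality of the Tate cup product and of $\eta$.
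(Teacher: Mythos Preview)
Your step (i) is exactly the paper's proof, and it is the \emph{entire} proof: the paper reads ``natural transformation of cohomology functors'' as ordinary naturality in the module argument $A$, nothing more. The one-sentence argument is that each factor in the composite \eqref{eq:cup-prod-induced-by-eta} is natural in $A$, so the composite is as well.

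Your step (ii), compatibility with connecting homomorphisms, is not part of what this lemma asserts. The paper does use precisely this compatibility in the very next result, \Cref{prop:dimshiftcupisos}, but there it is invoked directly as the standard fact that cup products commute (up to sign) with boundary maps \cite[p.~138]{brown-cohomology}, rather than being packaged into the present lemma. So your reading of ``cohomology functors'' as ``$\delta$-functors'' is reasonable but stronger than intended. The technical obstacle you flag---that $-\otimes_\ZZ X$ need not preserve exactness of the output of an arbitrary shiftable $F$---is genuine, and is one reason to keep the two facts separate: the paper only ever appeals to $\delta$-compatibility in the specific dimension-shifting situations where the needed exactness is clear. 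Your extra work is correct and anticipates what comes next, but for the lemma as stated you have over-proved.
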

\begin{proof}
	Given a $G$-module $A$, we note that each morphism in the cup-product map defined in \eqref{eq:cup-prod-induced-by-eta} is natural, so we are done.
\end{proof}
Now, let's begin with a key result on shiftable functors. The proof gives a taste for why our hypotheses are chosen.
\begin{proposition} \label{prop:dimshiftcupisos}
	Let $G$ be a finite group, and let $(F,F',X,\eta)$ be a shifting pair. If we have $r,s\in\ZZ$ and $c\in H^r(G,X)$ such that the cup-product map
	\[(-\cup c)\colon\widehat H^s(G,F-)\Rightarrow\widehat H^{r+s}(G,F'-)\]
	is a natural isomorphism, then the cup-product map
	\[(-\cup c)\colon\widehat H^j(G,F-)\Rightarrow\widehat H^{r+j}(G,F'-)\]
	is a natural isomorphism for all $j\in\ZZ$.
\end{proposition}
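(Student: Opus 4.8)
The plan is to run a dimension-shifting argument, propagating the isomorphism one cohomological degree at a time and using both axioms in the definition of a shiftable functor. It is enough to show that if $(-\cup c)$ is a natural isomorphism in degree $s$, then it is a natural isomorphism in degrees $s-1$ and $s+1$; iterating upward and downward then covers all $j\in\ZZ$. The two one-step claims are symmetric, so I will describe the passage from $s$ to $s-1$.

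Fix a $G$-module $B$. I would apply $F$ to the $\ZZ$-split short exact sequence $0\to I_G\otimes_\ZZ B\to\ZZ[G]\otimes_\ZZ B\to B\to0$: since $F$ is shiftable, the result is a $\ZZ$-split short exact sequence of $G$-modules whose middle term $F(\ZZ[G]\otimes_\ZZ B)$ is induced, hence has vanishing Tate cohomology. Doing the same with $F'$ and comparing the two Tate long exact sequences through the maps induced by $\eta$ should produce a commuting square
\[\begin{array}{ccc}
\widehat H^{s-1}(G,FB) & \xrightarrow{\ \delta\ } & \widehat H^{s}(G,F(I_G\otimes_\ZZ B))\\
\downarrow & & \downarrow\\
\widehat H^{r+s-1}(G,F'B) & \xrightarrow{\ \delta\ } & \widehat H^{r+s}(G,F'(I_G\otimes_\ZZ B))
\end{array}\]
in which the vertical maps are the cup-product maps $(-\cup c)$ induced by $\eta$ (in degrees $s-1$ and $s$), and the horizontal connecting maps $\delta$ are isomorphisms precisely because the induced middle terms contribute nothing. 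The right-hand vertical map is an instance of $(-\cup c)\colon\widehat H^s(G,F-)\Rightarrow\widehat H^{r+s}(G,F'-)$ evaluated at the $G$-module $I_G\otimes_\ZZ B$, so it is an isomorphism by hypothesis; since the horizontal maps are isomorphisms, the left-hand vertical map is too. As $B$ was arbitrary and the cup-product maps are natural in $B$ by \Cref{lem:cuppingisnatural}, this gives that $(-\cup c)$ is a natural isomorphism in degree $s-1$. The step from $s$ to $s+1$ is the same computation run with the other dimension-shifting sequence $0\to B\to\op{Hom}_\ZZ(\ZZ[G],B)\to\op{Hom}_\ZZ(I_G,B)\to0$, which now expresses $\widehat H^{s+1}(G,FB)$ through $\widehat H^{s}$ of the $G$-module $\op{Hom}_\ZZ(I_G,B)$; then I would induct in both directions.

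The step I expect to take the most care is the commutativity of that square, i.e.\ that the $\eta$-twisted cup product $(-\cup c)$ commutes with the connecting homomorphisms, with no intervening sign. Unwinding the definition in \eqref{eq:cup-prod-induced-by-eta}, I would split this into two pieces: first, the standard compatibility of the plain cup product with $\delta$ (where the usual sign vanishes because $c$ is a fixed class, so $\delta c=0$); and second, the fact that naturality of $\eta$ makes it a morphism of short exact sequences from the $F$-sequence tensored with $X$ to the $F'$-sequence, so that the $\eta$-part of the twisted cup product commutes with $\delta$ by naturality of connecting homomorphisms. This is exactly where the two shiftable-functor axioms earn their keep: $\ZZ$-split exactness is preserved, so all the rows in sight are exact (and tensoring the $F$-row with $X$ does no harm), while induced modules are preserved, which is what kills the middle-term Tate cohomology and makes each $\delta$ invertible.
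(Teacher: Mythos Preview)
Your argument is correct and matches the paper's almost exactly: both dimension-shift via the $\ZZ$-split sequence $0\to I_G\otimes_\ZZ A\to\ZZ[G]\otimes_\ZZ A\to A\to0$, apply $F$ and $F'$, use that the middle terms stay induced to make the connecting maps $\delta$ invertible, and conclude by a three-out-of-four isomorphism argument in the resulting square. The only discrepancy is the sign: the paper records the square as commuting with $(-1)^r\delta$ on one side (citing the cup-product/boundary compatibility from \cite[p.~138]{brown-cohomology}), whereas you claim no sign, and your justification ``$\delta c=0$'' momentarily conflates the cochain differential with the long-exact-sequence connecting map---but this is harmless, since whichever convention is in force the sign is a unit and the conclusion is unaffected.
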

\begin{proof}
	These cup-products are natural already by \Cref{lem:cuppingisnatural}, so we focus on showing that the component morphisms are isomorphisms. This is by dimension-shifting on $j$. We will show how to shift downwards. Shifting upwards is analogous.
	
	Suppose that the cup-product map is always an isomorphism for $j$, and we show that it is always an isomorphism for $j-1$. Namely, fix a $G$-module $A$, and we claim
	\[(-\cup c)\colon\widehat H^{j-1}(G,FA)\to\widehat H^{r+j-1}(G,F'A)\]
	is an isomorphism. To do so, we note we have $\ZZ$-split short exact sequence
	\[0\to I_G\otimes_\ZZ A\to\ZZ[G]\otimes_\ZZ A\to A\to0.\]
	Applying $F$ and $F'$, the definition of a shiftable functor yields the short exact sequences
	\[\arraycolsep=1.4pt\begin{array}{ccccccccc}
	    0 &\to& F(I_G\otimes_\ZZ A) &\to& F(\ZZ[G]\otimes_\ZZ A) &\to& FA &\to& 0, \\
	    0 &\to& F'(I_G\otimes_\ZZ A) &\to& F'(\ZZ[G]\otimes_\ZZ A) &\to& F'A &\to& 0.
	\end{array}\]
	The middle terms are still induced, so the boundary morphisms are isomorphisms, allowing us to see that
	\[\begin{tikzcd}
		{\widehat H^{j-1}(G,FA)} & {\widehat H^{r+j-1}(G,F'A)} \\
		{\widehat H^{j}(G,F(I_G\otimes_\ZZ A))} & {\widehat H^{r+j}(G,F'(I_G\otimes_\ZZ A))}
		\arrow["{(-\cup c)}", from=1-1, to=1-2]
		\arrow["{(-\cup c)}", from=2-1, to=2-2]
		\arrow["{\delta}"', from=1-1, to=2-1]
		\arrow["(-1)^r\delta"', from=1-2, to=2-2]
	\end{tikzcd}\]
	commutes because cup products commute with boundary homomorphisms \cite[p.~138]{brown-cohomology}. This finishes because the bottom row is an isomorphism by the inductive hypothesis, and the left and right morphisms are isomorphisms as discussed above.
\end{proof}
Here are some applications.
\begin{corollary} \label{cor:cupup}
	Let $G$ be a finite group. There exists $c\in\widehat H^1(G,I_G)$ such that, for any $G$-module $X$,
	\[(-\cup c)\colon\widehat H^i(G,\op{Hom}_\ZZ(X,-))\Rightarrow\widehat H^{i+1}(G,\op{Hom}_\ZZ(X,-\otimes_\ZZ I_G))\]
	is a natural isomorphism for any $i\in\ZZ$. In the future, we may label this cup-product map as $(-\cup c)_d$ because the simpler functor has smaller index.
\end{corollary}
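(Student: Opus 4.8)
The plan is to exhibit the cup-product map of the statement as one induced by a shifting pair and then invoke \Cref{prop:dimshiftcupisos} to cut the verification down to a single degree. Fix a $G$-module $X$ and put $F=\op{Hom}_\ZZ(X,-)$ and $F'=\op{Hom}_\ZZ(X,-\otimes_\ZZ I_G)$. The functor $F$ is shiftable by the example above, and $F'$ is shiftable because it is the composite of the shiftable functors $-\otimes_\ZZ I_G$ and $\op{Hom}_\ZZ(X,-)$: tensoring a $\ZZ$-split sequence with $I_G$ keeps it $\ZZ$-split, and $\ZZ[G]\otimes_\ZZ B\otimes_\ZZ I_G\cong\ZZ[G]\otimes_\ZZ(B\otimes_\ZZ I_G)$ is induced, so $\op{Hom}_\ZZ(X,\ZZ[G]\otimes_\ZZ B\otimes_\ZZ I_G)$ is induced by the computation in the example. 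The natural transformation
\[\eta_A\colon\op{Hom}_\ZZ(X,A)\otimes_\ZZ I_G\to\op{Hom}_\ZZ(X,A\otimes_\ZZ I_G),\qquad f\otimes t\mapsto\bigl(x\mapsto f(x)\otimes t\bigr),\]
then makes $(F,F',I_G,\eta)$ a shifting pair, and for any $c\in\widehat H^1(G,I_G)$ the map $(-\cup c)$ of the statement is (up to the canonical identification $I_G\otimes_\ZZ(-)\cong(-)\otimes_\ZZ I_G$) exactly the cup-product map induced by $\eta$ in the sense of \eqref{eq:cup-prod-induced-by-eta}. By \Cref{lem:cuppingisnatural} this is a natural transformation for every $c$, so by \Cref{prop:dimshiftcupisos} it suffices to produce one class $c$ — visibly independent of $X$ — for which $(-\cup c)$ is an isomorphism on every $G$-module in some fixed degree $i$.

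For $c$ I would take the class of the $\ZZ$-split extension $\mathcal E\colon 0\to I_G\to\ZZ[G]\to\ZZ\to0$ in $\widehat H^1(G,I_G)$, equivalently the image of $\overline{1}$ under the boundary isomorphism $\widehat H^0(G,\ZZ)\xrightarrow{\ \sim\ }\widehat H^1(G,I_G)$ attached to $\mathcal E$ (a boundary isomorphism because $\ZZ[G]$ is induced). Fix a $G$-module $A$ and tensor $\mathcal E$ with $A$ over $\ZZ$ to get the $\ZZ$-split sequence $0\to I_G\otimes_\ZZ A\to\ZZ[G]\otimes_\ZZ A\to A\to0$; applying $\op{Hom}_\ZZ(X,-)$ — which is exact on $\ZZ$-split sequences — yields a $\ZZ$-split short exact sequence whose middle term $\op{Hom}_\ZZ(X,\ZZ[G]\otimes_\ZZ A)$ is induced. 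Hence its connecting homomorphism
\[\delta\colon\widehat H^i(G,\op{Hom}_\ZZ(X,A))\xrightarrow{\ \sim\ }\widehat H^{i+1}\bigl(G,\op{Hom}_\ZZ(X,I_G\otimes_\ZZ A)\bigr)\cong\widehat H^{i+1}(G,F'A)\]
is an isomorphism, natural in $A$.

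The crux — and the step I expect to cost the most — is identifying $\delta$ with the $\eta$-induced cup-product map against $c$, up to sign and the identifications above. I would handle the case $X=\ZZ$ first, where $F=\id$, $F'=-\otimes_\ZZ I_G$, and $\eta$ is the identity: there the compatibility of cup products with connecting homomorphisms \cite[p.~138]{brown-cohomology} together with the unit axiom gives $\delta(\alpha)=\delta(\overline{1}\cup\alpha)=\delta_{\mathcal E}(\overline{1})\cup\alpha=c\cup\alpha$, so $\delta$ is, up to sign, cupping with $c$. For general $X$ one compares two $\ZZ$-split short exact sequences with the same right-hand term $\op{Hom}_\ZZ(X,A)$: the one obtained from the $X=\ZZ$ construction applied to the module $\op{Hom}_\ZZ(X,A)$, namely $0\to I_G\otimes_\ZZ\op{Hom}_\ZZ(X,A)\to\ZZ[G]\otimes_\ZZ\op{Hom}_\ZZ(X,A)\to\op{Hom}_\ZZ(X,A)\to0$, and the one above obtained by applying $\op{Hom}_\ZZ(X,-)$ to $\mathcal E\otimes_\ZZ A$; the canonical maps $N\otimes_\ZZ\op{Hom}_\ZZ(X,A)\to\op{Hom}_\ZZ(X,N\otimes_\ZZ A)$ for $N\in\{I_G,\ZZ[G],\ZZ\}$ assemble into a morphism between these two sequences, and naturality of the connecting homomorphism carries the $X=\ZZ$ computation across $\eta_*$, identifying $\delta$ with the $\eta$-induced cup product against $c$. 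Since $\delta$ is an isomorphism for every $A$, \Cref{prop:dimshiftcupisos} promotes this to an isomorphism in every degree $i$, completing the proof. The only fiddly points are the sign conventions and the commutativity of the comparison squares between the two short exact sequences, neither of which is conceptually serious.
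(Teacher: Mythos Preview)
Your proposal is correct and follows the same architecture as the paper: set up the shifting pair $(\op{Hom}_\ZZ(X,-),\op{Hom}_\ZZ(X,-\otimes_\ZZ I_G),I_G,\eta)$, reduce via \Cref{prop:dimshiftcupisos} to a single degree, and identify the cup product with the connecting homomorphism of the sequence obtained from $0\to I_G\to\ZZ[G]\to\ZZ\to0$. Your class $c=\delta_{\mathcal E}(\overline 1)$ is the same one the paper uses, represented by the $1$-cocycle $g\mapsto g-1$.

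The only genuine difference is in how the identification $\delta=(-\cup c)$ is carried out. The paper works at $i=0$ and does a three-line explicit cocycle computation: lift $[f]\in\widehat H^0(G,\op{Hom}_\ZZ(X,A))$ to $\widetilde f(x)=1\otimes f(x)$ and compute $(d\widetilde f)(g)(x)=(g-1)\otimes f(x)$, which is visibly $\eta_A(f\otimes c(g))$. You instead argue abstractly: first settle $X=\ZZ$ via the compatibility of $\cup$ with connecting maps, then transport to general $X$ by the morphism of short exact sequences furnished by the canonical maps $N\otimes_\ZZ\op{Hom}_\ZZ(X,A)\to\op{Hom}_\ZZ(X,N\otimes_\ZZ A)$. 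Your route is cleaner conceptually and avoids touching cocycles, at the cost of tracking the swap $I_G\otimes(-)\cong(-)\otimes I_G$ and the attendant sign $(-1)^i$; the paper's route is shorter and pins down the sign for free. Both are fine.
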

\begin{proof}
	Here, we are using the shifting pair $(\op{Hom}_\ZZ(X,-),\op{Hom}_\ZZ(X,I_G\otimes_\ZZ-),I_G,\eta)$, where
	\[\eta_A\colon\op{Hom}_\ZZ(X,A)\otimes_\ZZ I_G\to\op{Hom}_\ZZ(X,A\otimes_\ZZ I_G)\]
	is the canonical map sending $f\otimes z$ to $x\mapsto (f(x)\otimes z)$.

	Now, in light of \Cref{prop:dimshiftcupisos}, we merely have to find $c\in\widehat H^1(G,I_G)$ such that $(-\cup c)$ is a natural isomorphism at $i=0$.
	Well, we note that we have the $\ZZ$-split short exact sequence
	\begin{equation}
	    0\to\op{Hom}_\ZZ(X,A\otimes_\ZZ I_G)\to\op{Hom}_\ZZ(X,A\otimes_\ZZ\ZZ[G])\to\op{Hom}_\ZZ(X,A)\to0 \label{eq:cup-up-ses}
	\end{equation}
	which will induce a boundary morphism
	\[\delta\colon\widehat H^0(G,\op{Hom}_\ZZ(X,A))\to\widehat H^1(G,\op{Hom}_\ZZ(X,A\otimes_\ZZ I_G)),\]
	which is an isomorphism because the middle term of \eqref{eq:cup-up-ses} is induced.
	
	To finish, we claim that $\delta$ equals the cup-product map $(-\cup c)$, where $c\in\widehat H^1(G,I_G)$ is represented by the $1$-cocycle $g\mapsto(g-1)$.
	Indeed, given $[f]\in\widehat H^0(G,\op{Hom}_\ZZ(X,A))$ where $f\colon X\to A$ is a $G$-module homomorphism, we can pull this back to the $0$-cochain $\widetilde f\colon X\to\ZZ[G]\otimes_\ZZ A$ defined by $\widetilde f(x)\coloneqq1\otimes f(x)$. Applying the differential, we can compute the $1$-cocycle $d\widetilde f\in B^1(G,\op{Hom}_\ZZ(X,\ZZ[G]\otimes_\ZZ A))$ is
	\begin{align*}
		(d\widetilde f)(g)(x) = (g-1)\otimes f(x),
	\end{align*}
	finishing.
\end{proof}
\begin{corollary} \label{cor:cupdown}
	Let $G$ be a finite group. There exists $c\in\widehat H^1(G,I_G)$ such that, for any $G$-module $X$,
	\[(-\cup c)\colon\widehat H^i(G,\op{Hom}_\ZZ(X,\op{Hom}_\ZZ(I_G,-)))\Rightarrow\widehat H^{i+1}(G,\op{Hom}_\ZZ(X,-))\]
	is a natural isomorphism for any $i\in\ZZ$. In the future, we may label this cup-product map as $(-\cup c)_u$ because the simpler functor has larger index.
\end{corollary}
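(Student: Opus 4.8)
The plan is to run the argument of \Cref{cor:cupup} in the other direction, using an ``evaluation'' natural transformation in place of the ``extension along $\ZZ[G]$'' transformation used there. First I would assemble the shifting pair
\[\bigl(\op{Hom}_\ZZ(X,\op{Hom}_\ZZ(I_G,-)),\ \op{Hom}_\ZZ(X,-),\ I_G,\ \eta\bigr),\]
where $\eta_A$ sends $f\otimes z$, with $f\colon X\to\op{Hom}_\ZZ(I_G,A)$ and $z\in I_G$, to the homomorphism $x\mapsto f(x)(z)$. Both functors are shiftable: $\op{Hom}_\ZZ(X,-)$ is shiftable by the example above, while $\op{Hom}_\ZZ(X,\op{Hom}_\ZZ(I_G,-))$ is $\ZZ$-additive, so it preserves $\ZZ$-split short exact sequences, and it carries induced modules to induced modules by applying that example's computation twice. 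One also checks that $\eta$ is $G$-equivariant and natural in $A$, which is routine; then \Cref{lem:cuppingisnatural} provides the naturality of all resulting cup-product maps for free.

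Next, by \Cref{prop:dimshiftcupisos} it suffices to exhibit one $c\in\widehat H^1(G,I_G)$ for which the cup-product map induced by $\eta$ is a natural isomorphism in a single degree, and I would work at $i=0$. Applying the $\ZZ$-exact functor $\op{Hom}_\ZZ(X,-)$ to the $\ZZ$-split short exact sequence $0\to A\to\op{Hom}_\ZZ(\ZZ[G],A)\to\op{Hom}_\ZZ(I_G,A)\to0$ yields the $\ZZ$-split short exact sequence
\[0\to\op{Hom}_\ZZ(X,A)\to\op{Hom}_\ZZ(X,\op{Hom}_\ZZ(\ZZ[G],A))\to\op{Hom}_\ZZ(X,\op{Hom}_\ZZ(I_G,A))\to0,\]
whose middle term is induced. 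Hence the connecting homomorphism
\[\delta\colon\widehat H^0(G,\op{Hom}_\ZZ(X,\op{Hom}_\ZZ(I_G,A)))\to\widehat H^1(G,\op{Hom}_\ZZ(X,A))\]
is an isomorphism, naturally in $A$.

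It remains to identify $\delta$ with $(-\cup c)$ for a suitable class $c\in\widehat H^1(G,I_G)$, represented (up to sign) by the $1$-cocycle $g\mapsto g-1$. Concretely: given a $G$-homomorphism $f\colon X\to\op{Hom}_\ZZ(I_G,A)$ representing a class in $\widehat H^0$, lift it to the $0$-cochain $\widetilde f\colon X\to\op{Hom}_\ZZ(\ZZ[G],A)$ with $\widetilde f(x)(w)=f(x)\bigl(w-\epsilon(w)\bigr)$, where $\epsilon\colon\ZZ[G]\to\ZZ$ is the augmentation and we use the $\ZZ$-splitting $\ZZ[G]=\ZZ\cdot 1\oplus I_G$. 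Then $\delta[f]$ is represented by $d\widetilde f$, and a direct computation using the $G$-action $(g\psi)(w)=g\psi(g^{-1}w)$ on $\op{Hom}_\ZZ(\ZZ[G],A)$ together with the $G$-equivariance of $f$ gives $(d\widetilde f)(g)(x)(w)=\epsilon(w)\cdot f(x)(1-g)$; in other words $d\widetilde f$ is precisely the image under $\eta$ of the cocycle $g\mapsto\bigl(x\mapsto f(x)\otimes(1-g)\bigr)$, which represents $[f]\cup c$. Invoking \Cref{prop:dimshiftcupisos} then promotes this to a natural isomorphism in every degree $i$.

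I expect this last identification to be the only real obstacle: matching the connecting homomorphism with the $\eta$-induced cup product at the cochain level forces careful tracking of the Tate cup-product conventions, the attendant signs, and the twisted $G$-action on $\op{Hom}_\ZZ(\ZZ[G],A)$. Everything else is formal once \Cref{prop:dimshiftcupisos} and the vanishing of Tate cohomology on induced modules are available.
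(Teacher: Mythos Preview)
Your proposal is correct and follows essentially the same approach as the paper: the same shifting pair with the evaluation map $\eta$, the same reduction to $i=0$ via \Cref{prop:dimshiftcupisos}, the same $\ZZ$-split short exact sequence with induced middle term, and the same cochain-level identification of $\delta$ with $(-\cup c)$ using the lift $\widetilde f(x)(w)=f(x)(w-\varepsilon(w))$. The paper pins down the cocycle as $c(g)=1-g$ directly (rather than ``up to sign''), but your computation lands on exactly this representative anyway.
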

\begin{proof}
	Similar to before, we are using the shifting pair $(\op{Hom}_\ZZ(X,\op{Hom}_\ZZ(I_G,-)),\op{Hom}_\ZZ(X,-),I_G,\eta)$, where
	\[\eta_A\colon\op{Hom}_\ZZ(X,\op{Hom}_\ZZ(I_G,A))\otimes_\ZZ I_G\Rightarrow\op{Hom}_\ZZ(X,A)\]
	is the canonical map sending $f\otimes z$ to $x\mapsto f(x)(z)$. The proof now proceeds in the same way as \Cref{cor:cupup} using the $\ZZ$-split short exact sequence
	\[0\to\op{Hom}_\ZZ(X,A)\to\op{Hom}_\ZZ(X,\op{Hom}_\ZZ(\ZZ[G],A))\to\op{Hom}_\ZZ(X,\op{Hom}_\ZZ(I_G,A))\to0,\]
    which will induce a boundary morphism
	\[\delta\colon\widehat H^0(G,\op{Hom}_\ZZ(X,\op{Hom}_\ZZ(I_G,A)))\to\widehat H^1(G,\op{Hom}_\ZZ(X,A)),\]
	which is an isomorphism because our middle term $\op{Hom}_\ZZ(X,\op{Hom}_\ZZ(\ZZ[G],A))$ is induced.

	Now, we claim that $\delta$ equals the cup-product map $(-\cup c)$ where $c\in\widehat H^1(G,I_G)$ is represented by the $1$-cocycle $g\mapsto(1-g)$. Indeed, fix some $[f]\in\widehat H^0(G,\op{Hom}_\ZZ(X,\op{Hom}_\ZZ(I_G,A)))$ where $f\colon X\to\op{Hom}_\ZZ(I_G,A)$ is a $G$-module homomorphism. This pulls back to the $0$-cochain $\widetilde f\in\widehat H^0(G,\op{Hom}_\ZZ(X,\op{Hom}_\ZZ(\ZZ[G],A)))$ defined by $\widetilde f(x)(z)\coloneqq f(x)(z-\varepsilon(z))$. Applying the differential, a short computation finds
	\begin{align*}
		(d\widetilde f)(g)(x)(z) = \varepsilon(z)f(x)\left(1-g\right).
	\end{align*}
	Thus, this pulls back to the $1$-cocycle $g\mapsto(x\mapsto f(x)(1-g))$ in $\widehat H^1(G,\op{Hom}_\ZZ(X,A))$, as needed.
\end{proof}
\begin{remark}
	The same proofs for \Cref{cor:cupup,cor:cupdown} will work when $\op{Hom}_\ZZ(X,-)$ is replaced by $X\otimes_\ZZ-$ or any composite of these. There is not an analogue for arbitrary shiftable functors because, for example, there is no obvious way to construct $\eta$ in general. Regardless, we will not need to work in these levels of generality.
\end{remark}
The point of \Cref{cor:cupup,cor:cupdown} is that we can now dimension-shift via cup products. In fact, the proofs show that we can use the same $c\in\widehat H^1(G,I_G)$ represented by $g\mapsto(g-1)$ for both shifting isomorphisms.

\subsection{Shifting Natural Transformations}
Observe that a natural transformation $F\Rightarrow F'$ of shiftable functors will induce natural transformations in cohomology
\[\widehat H^i(G,F-)\Rightarrow\widehat H^i(G,F'-)\]
It will turn out that, when $F=\op{Hom}_\ZZ(X,-)$ and $F'=\op{Hom}_\ZZ(X',-)$, all natural transformations in cohomology will come from cup products.

To begin, we show this result for $i=0$.
\begin{lemma} \label{lem:naturaltransiscupping}
	Let $G$ be a finite group, and let $X$ and $X'$ be $G$-modules. Suppose that, for some $r\in\ZZ$, there is a natural transformation
	\[\Phi_\bullet\colon\widehat H^0(G,\op{Hom}_\ZZ(X,-))\Rightarrow\widehat H^r(G,\op{Hom}_\ZZ(X',-)).\]
	Then there exists a unique $x\in\widehat H^r(G,\op{Hom}_\ZZ(X',X))$ such that $\Phi_\bullet=(-\cup x)$, where the cup product is induced by the shifting pair of \Cref{ex:shiftingpair}.
\end{lemma}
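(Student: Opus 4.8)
The plan is to run a Yoneda-style argument: the functor $\widehat H^0(G,\op{Hom}_\ZZ(X,-))$ should be thought of as ``represented'' by the universal class $[\id_X]\in\widehat H^0(G,\op{Hom}_\ZZ(X,X))$, so that a natural transformation out of it is pinned down by where it sends $[\id_X]$. Concretely, I would \textbf{define} $x:=\Phi_X([\id_X])\in\widehat H^r(G,\op{Hom}_\ZZ(X',X))$ and then show $\Phi_\bullet=(-\cup x)$, with the cup product taken via the shifting pair of \Cref{ex:shiftingpair}.

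For existence of the equality, fix a $G$-module $M$ and a class in $\widehat H^0(G,\op{Hom}_\ZZ(X,M))$. Since $\widehat H^0(G,N)=N^G/N_GN$ and $\op{Hom}_\ZZ(X,M)^G=\op{Hom}_{\ZZ[G]}(X,M)$, this class is $[f]$ for an honest $G$-module homomorphism $f\colon X\to M$. Viewing $f$ as a morphism in $\op{Mod}_G$ and applying naturality of $\Phi_\bullet$ to it produces a commuting square whose vertical arrows are induced by post-composition with $f$, namely $\op{Hom}_\ZZ(X,f)$ on the $\widehat H^0$ side and $\op{Hom}_\ZZ(X',f)$ on the $\widehat H^r$ side. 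Chasing $[\id_X]$ around this square and using $\op{Hom}_\ZZ(X,f)([\id_X])=[f]$ gives
\[\Phi_M([f])=\op{Hom}_\ZZ(X',f)_*(x).\]
It then remains to identify the right-hand side with the cup product $[f]\cup x$ induced by $\eta$. This is the standard fact that cupping with a degree-$0$ class represented by a $G$-invariant element and then applying a pairing $\eta$ agrees with the map on Tate cohomology induced by the $G$-module homomorphism $\varphi\mapsto\eta(f\otimes\varphi)$; here that homomorphism is $\varphi\mapsto f\circ\varphi=\op{Hom}_\ZZ(X',f)(\varphi)$, which closes the loop. Since every class of $\widehat H^0(G,\op{Hom}_\ZZ(X,M))$ is of the form $[f]$, this yields $\Phi_M=(-\cup x)$ for all $M$.

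Uniqueness falls out of the same computation specialized to $f=\id_X$: if $(-\cup x')=\Phi_\bullet$ for some $x'$, then evaluating at $M=X$ on $[\id_X]$ gives $x'=[\id_X]\cup x'=\Phi_X([\id_X])=x$, where $[\id_X]\cup x'=x'$ because the $G$-module map $\varphi\mapsto\eta(\id_X\otimes\varphi)=\varphi$ is the identity. The only step I expect to need genuine care is the compatibility between $\op{Hom}_\ZZ(X',f)_*$ and $-\cup[f]$: it is ``well known,'' but it is not purely formal and should be verified against the explicit cochain or complete-resolution description of the cup product on Tate cohomology (see \cite{brown-cohomology}), since naturality of $\Phi_\bullet$ alone does not supply it. Everything else is a diagram chase together with the identification $\op{Hom}_\ZZ(X,M)^G=\op{Hom}_{\ZZ[G]}(X,M)$.
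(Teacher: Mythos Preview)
Your proposal is correct and follows essentially the same Yoneda-style argument as the paper: define $x\coloneqq\Phi_X([\id_X])$, apply naturality of $\Phi_\bullet$ with respect to a $G$-map $f\colon X\to M$ to the class $[\id_X]$, and identify the induced map $\op{Hom}_\ZZ(X',f)_*$ on $\widehat H^r$ with the cup product $[f]\cup-$. The paper handles this last identification in a single line (``one can compute that $f([\id_X])=[f]$ and $f(x)=[f]\cup x$''), whereas you rightly flag it as the one step requiring a cochain-level check; this is a helpful clarification rather than a divergence in strategy.
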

\begin{proof}
	This is essentially the Yoneda lemma. For uniqueness, we note having $x\in\widehat H^r(G,\op{Hom}_\ZZ(X,',X))$ such that $\Phi_\bullet=(-\cup x)$ lets us compute
	\[\Phi_X([{\id_X}])=[{\id_X}]\cup x=x,\]
	so we see the $x$ is forced. Now, for existence, set $x\coloneqq\Phi_X([\id_X])$. The point is to fix some $G$-module $A$ and any $[f]\in\widehat H^0(G,\op{Hom}_\ZZ(X,A))$ in order to track through the commutativity of the following diagram.
	\begin{equation*}
		\begin{tikzcd}
			{\widehat H^0(G,\op{Hom}_\ZZ(X,X))} & {\widehat H^r(G,\op{Hom}_\ZZ(X',X))} \\
			{\widehat H^0(G,\op{Hom}_\ZZ(X,A))} & {\widehat H^r(G,\op{Hom}_\ZZ(X',A))}
			\arrow["{\Phi_X}", from=1-1, to=1-2]
			\arrow["f"', from=1-1, to=2-1]
			\arrow["f"', from=1-2, to=2-2]
			\arrow["{\Phi_A}", from=2-1, to=2-2]
		\end{tikzcd}
	\end{equation*}
	Plugging in $[{\id_X}]$ in the above diagram, one can compute that $f([{\id_X}])=[f]$ and $f(x)=[f]\cup x$, so $\Phi_A([f])=[f]\cup x$ follows.
\end{proof}
\begin{example} \label{ex:ig-shifting-up}
    Let $G$ be a finite group. The short exact sequence
    \[0\to\op{Hom}_\ZZ(\ZZ,A)\to\op{Hom}_\ZZ(\ZZ[G],A)\to\op{Hom}_\ZZ(I_G,A)\to0\]
    induces a boundary isomorphism $\widehat H^0(G,\op{Hom}_\ZZ(I_G,A))\to\widehat H^{1}(G,\op{Hom}_\ZZ(\ZZ,A))$, and these assemble into a natural isomorphism. We expect this natural isomorphism to arise from a cup product by \Cref{lem:naturaltransiscupping}. Indeed, a computation similar to \Cref{cor:cupdown} reveals that the natural isomorphism $\widehat H^0(G,\op{Hom}_\ZZ(I_G,-))\Rightarrow\widehat H^{1}(G,\op{Hom}_\ZZ(\ZZ,-))$ is given by the cup product with the class $[c]\in\widehat H^1(G,I_G)$ defined by $c(g)\coloneqq 1-g$.
\end{example}
We now get the main result by dimension-shifting.
\begin{proposition} \label{prop:allnaturaltransarecups}
	Let $G$ be a finite group, and let $X$ and $X'$ be $G$-modules. Then, given $r,s\in\ZZ$, any natural transformation
	\[\Phi_\bullet^{(s)}\colon\widehat H^s(G,\op{Hom}_\ZZ(X,-))\Rightarrow\widehat H^{r+s}(G,\op{Hom}_\ZZ(X',-)),\]
	is $\Phi_\bullet^{(s)}=(-\cup x)$ for a unique $x\in\widehat H^r(G,\op{Hom}_\ZZ(X',X))$.
\end{proposition}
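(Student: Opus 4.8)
The plan is to bootstrap from \Cref{lem:naturaltransiscupping} (the case $s=0$) by dimension-shifting one degree at a time, with \Cref{cor:cupdown} used to climb up in degree and \Cref{cor:cupup} to descend. Precisely, I would prove by induction the statement $P(n)$: for all $G$-modules $X,X'$ and all $r\in\ZZ$, every natural transformation $\widehat H^n(G,\op{Hom}_\ZZ(X,-))\Rightarrow\widehat H^{n+r}(G,\op{Hom}_\ZZ(X',-))$ equals $(-\cup x)$ for a unique $x\in\widehat H^r(G,\op{Hom}_\ZZ(X',X))$, the base case $P(0)$ being \Cref{lem:naturaltransiscupping}. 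Here $\mathrm{(i)}$ the cup products are those induced by the composition pairing of \Cref{ex:shiftingpair}; $\mathrm{(ii)}$ by \Cref{lem:cuppingisnatural} these maps are automatically natural, so the content is just the bijection ``natural transformation $\leftrightarrow$ class''; and $\mathrm{(iii)}$ as in \Cref{prop:dimshiftcupisos} I will repeatedly use that cup products commute with connecting homomorphisms up to sign.

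For $P(n)\Rightarrow P(n+1)$: given $\Phi$ at degree $n+1$, \Cref{cor:cupdown} (after rewriting $\op{Hom}_\ZZ(Y,\op{Hom}_\ZZ(I_G,-))=\op{Hom}_\ZZ(Y\otimes_\ZZ I_G,-)$) supplies natural isomorphisms $\Theta\colon\widehat H^n(G,\op{Hom}_\ZZ(X\otimes_\ZZ I_G,-))\xrightarrow{\ \sim\ }\widehat H^{n+1}(G,\op{Hom}_\ZZ(X,-))$ and likewise $\Theta'$ for $X'$, both equal to cupping with the fixed class $c\in\widehat H^1(G,I_G)$ represented by $g\mapsto 1-g$. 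Then $\Theta'^{-1}\circ\Phi\circ\Theta$ is a degree-$n$ natural transformation, so by $P(n)$ it equals $(-\cup y)$ for a unique $y\in\widehat H^r(G,\op{Hom}_\ZZ(X'\otimes_\ZZ I_G,X\otimes_\ZZ I_G))$. Writing out $\Phi(\Theta\zeta)=(\zeta\cup y)\cup c$ and, supposing $\Phi=(-\cup x)$, also $\Phi(\Theta\zeta)=(\zeta\cup c)\cup x$, one reorganizes both sides by associativity and graded-commutativity of the cup product (using that the various composition and evaluation pairings are mutually compatible) into the form $\zeta\cup(\text{class in }\widehat H^{r+1}(G,\op{Hom}_\ZZ(X',X\otimes_\ZZ I_G)))$. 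Since $\Theta$ is surjective, $P(n)$'s uniqueness forces the two classes to agree, i.e.\ $\pm(-\cup c)_d(x)$ must equal an explicit class built from $y$ and $c$; as $(-\cup c)_d\colon\widehat H^r(G,\op{Hom}_\ZZ(X',X))\xrightarrow{\ \sim\ }\widehat H^{r+1}(G,\op{Hom}_\ZZ(X',X\otimes_\ZZ I_G))$ is an isomorphism by \Cref{cor:cupup}, this both produces the unique $x$ and proves it works. The step $P(n)\Rightarrow P(n-1)$ is dual: one shifts $\Phi$ \emph{up} by one using \Cref{cor:cupup}, rewrites $\op{Hom}_\ZZ(Y,-\otimes_\ZZ I_G)=\op{Hom}_\ZZ(Y\otimes_\ZZ I_G^{\vee},-)$ with $I_G^{\vee}=\op{Hom}_\ZZ(I_G,\ZZ)$ (legitimate since $I_G$ is finitely generated and $\ZZ$-free), applies $P(n)$ to get $(-\cup y)$ with $y\in\widehat H^r(G,\op{Hom}_\ZZ(X'\otimes I_G^{\vee},X\otimes I_G^{\vee}))$, and runs the same bookkeeping, which now identifies $\Phi=(-\cup x)$ with the condition $x\otimes\id_{I_G^{\vee}}=\pm y$ under the map $\phi\mapsto\phi\otimes\id$.

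The steps needing the most care are the cup-product bookkeeping — keeping straight which composition pairing of \Cref{ex:shiftingpair} versus which evaluation pairing of \Cref{cor:cupup,cor:cupdown} sits in each slot, and checking the associativity squares so that $x$ can actually be extracted (both existence and uniqueness come out of a single identity of the shape ``$\Phi=(-\cup x)\iff x=(\text{explicit isomorphism})^{-1}(\text{explicit class})$'') — and, as the genuinely new ingredient in the descending step, the fact that $\phi\mapsto\phi\otimes\id_{I_G^{\vee}}$ induces an isomorphism $\widehat H^\bullet(G,\op{Hom}_\ZZ(X',X))\xrightarrow{\ \sim\ }\widehat H^\bullet(G,\op{Hom}_\ZZ(X'\otimes I_G^{\vee},X\otimes I_G^{\vee}))$. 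After identifying the target with $\widehat H^\bullet(G,\op{Hom}_\ZZ(X',X)\otimes_\ZZ\op{End}_\ZZ(I_G))$ this is $\otimes$-ing with $1\mapsto\id_{I_G}$, and it is an isomorphism because $\op{End}_\ZZ(I_G)/\ZZ\cdot\id_{I_G}$ is cohomologically trivial: its Tate cohomology over every subgroup $H\le G$ vanishes by the trace splitting, since the $\ZZ$-rank $|G|-1$ of $I_G$ is coprime to $|H|$, and being $\ZZ$-free it is therefore $\ZZ[G]$-projective, hence stays cohomologically trivial after $\otimes_\ZZ(\text{anything})$. (Conceptually: since induced modules are acyclic in every degree, the cohomological $\delta$-functors $\widehat H^\bullet(G,\op{Hom}_\ZZ(X,-))$ and $\widehat H^\bullet(G,\op{Hom}_\ZZ(X',-))$ on $\ZZ$-split short exact sequences are determined by any single degree together with their connecting maps, so a natural transformation in one degree extends uniquely to a morphism of $\delta$-functors; comparing with the extension of the degree-$0$ cup product of \Cref{lem:naturaltransiscupping}, which is such a morphism by the sign compatibility, closes the argument.)
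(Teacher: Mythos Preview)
Your overall dimension-shifting strategy from the base case $s=0$ is the same as the paper's, but the implementation differs: you replace the pair $(X,X')$ by $(X\otimes_\ZZ I_G,\,X'\otimes_\ZZ I_G)$ (ascending) or $(X\otimes_\ZZ I_G^\vee,\,X'\otimes_\ZZ I_G^\vee)$ (descending) and apply the inductive hypothesis to the \emph{new} pair, whereas the paper keeps $X,X'$ fixed and instead feeds $A\mapsto I_G\otimes_\ZZ A$ or $A\mapsto\op{Hom}_\ZZ(I_G,A)$ into the argument slot. The paper's choice means the class produced by the inductive hypothesis already lies in $\widehat H^r(G,\op{Hom}_\ZZ(X',X))$, at the price of a diagram chase (the large diagram in the paper's proof) to verify $\Phi^{(i+1)}=(-\cup(-1)^ix)$. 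Your choice produces a class $y$ in a shifted group and then has to extract $x$ from it; in the ascending step this extraction goes through the isomorphism $(-\cup c)_d$ of \Cref{cor:cupup} and works cleanly.

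The descending step, however, has a gap. Your extraction requires that $\phi\mapsto\phi\otimes\id_{I_G^\vee}$ induce an isomorphism $\widehat H^r(G,\op{Hom}_\ZZ(X',X))\to\widehat H^r(G,\op{Hom}_\ZZ(X'\otimes_\ZZ I_G^\vee,X\otimes_\ZZ I_G^\vee))$, which you reduce to the cohomological triviality of $Q\coloneqq\op{End}_\ZZ(I_G)/\ZZ\cdot\id_{I_G}$. But your justification---``by the trace splitting, since $|G|-1$ is coprime to $|H|$''---only shows that $\widehat H^*(H,\ZZ)\to\widehat H^*(H,\op{End}_\ZZ(I_G))$ is \emph{split injective} (the composite with $\op{tr}$ is multiplication by the unit $|G|-1$ on $|H|$-torsion groups); it does not give surjectivity, so $\widehat H^*(H,Q)=0$ does not follow from the trace alone. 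The claim is nonetheless true: two dimension shifts through $0\to I_G\to\ZZ[G]\to\ZZ\to0$ and its $\ZZ$-dual give $\widehat H^i(H,\op{End}_\ZZ(I_G))\cong\widehat H^i(H,\ZZ)$ as finite abelian groups for every $H\le G$, and an injection between finite groups of equal order is a bijection. With that patch your argument goes through, but note that the paper's route---varying $A$ rather than $X,X'$---sidesteps this lemma entirely.
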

\begin{proof}
	This argument is by dimension-shifting the $s$ upwards and downwards. Namely, we show the conclusion of the statement by induction on $s$. For $s=0$, this is \Cref{lem:naturaltransiscupping}. We will explain how to construct $x$ in an induction upwards and why $x$ is unique in an induction downwards. The other inductions are similar. For brevity, we set $F\coloneqq\op{Hom}_\ZZ(X,-)$ and $F'\coloneqq\op{Hom}_\ZZ(X',-)$.

	We begin with the induction upwards for the construction of $x$. Suppose $x$ always exists for $s=i$, and we show that it exists for $s=i+1$, so fix a natural transformation
	\[\Phi_\bullet^{(i+1)}\colon\widehat H^{i+1}(G,F-)\Rightarrow\widehat H^{r+i+1}(G,F'-),\]
	which we would like to know arises as $(-\cup x)$ for some $x\in\widehat H^r(G,\op{Hom}_\ZZ(X',X))$. In order to apply the inductive hypothesis, we need to construct a natural transformation $\Phi_\bullet^{(i)}$, so we do so. Fix some $G$-module $A$, and we use \Cref{cor:cupup} to set $c\in\widehat H^1(G,I_G)$ given by $g\mapsto(g-1)$ to give the isomorphisms
	\[\arraycolsep=1.4pt\begin{array}{ccccccccc}
		(-\cup c)_d\colon& \widehat H^i(G,FA) &\to& \widehat H^{i+1}(G,F(A\otimes_\ZZ I_G)) \\
		(-\cup c)'_d\colon& \widehat H^{r+i}(G,F'A) &\to& \widehat H^{r+i+1}(G,F'(A\otimes_\ZZ I_G))
	\end{array}\]
	fitting into the diagram
	\[\begin{tikzcd}
		{\widehat H^i(G,FA)} & {\widehat H^{i+1}(G,F(I_G\otimes_\ZZ A))} \\
		{\widehat H^{r+i}(G,F'A)} & {\widehat H^{r+i+1}(G,F'(I_G\otimes_\ZZ A))}
		\arrow["{(-\cup c)_d}", from=1-1, to=1-2]
		\arrow["{\Phi^{(i+1)}_{I_G\otimes_\ZZ A}}", from=1-2, to=2-2]
		\arrow["{(-\cup c)'_d}", from=2-1, to=2-2]
		\arrow[dashed, from=1-1, to=2-1]
	\end{tikzcd}\]
	where the horizontal arrows are isomorphisms. Thus, we induce a morphism
	\[\Phi_A^{(i)}\coloneqq((-\cup c)'_d)^{-1}\circ\Phi^{(i+1)}_{I_G\otimes_\ZZ A}\circ(-\cup c)_d,\]
	which we see assembles into a natural transformation $\Phi_\bullet^{(i)}$. The inductive hypothesis now tells us that $\Phi_\bullet^{(i)}=(-\cup x)$ for some $x\in\widehat H^p(G,\op{Hom}_\ZZ(X',X))$.
	
	We now need to use what we know about $\Phi_\bullet^{(i+1)}$ to talk about $\Phi_\bullet^{(i)}$, so we need to shift back in the other direction. Fix some $G$-module $A$, and we use \Cref{cor:cupdown} to give isomorphisms
	\[\arraycolsep=1.4pt\begin{array}{ccccccccc}
		(-\cup c)_u\colon&\widehat H^i(G,F(\op{Hom}_\ZZ(I_G,A)))&\to&\widehat H^{i+1}(G,FA) \\
		(-\cup c)_u'\colon&\widehat H^{r+i}(G,F(\op{Hom}_\ZZ(I_G,A)))&\to&\widehat H^{r+i+1}(G,FA)
	\end{array}\]
	fitting into the diagram
	\begin{equation}
		\begin{tikzcd}
			{\widehat H^i(G,F(\op{Hom}_\ZZ(I_G,A)))} & {\widehat H^{i+1}(G,FA)} \\
			{\widehat H^{r+i}(G,F'(\op{Hom}_\ZZ(I_G,A)))} & {\widehat H^{r+i+1}(G,F'A)}
			\arrow["{(-\cup c)_u}", from=1-1, to=1-2]
			\arrow["{(-\cup c)_u'}", from=2-1, to=2-2]
			\arrow["{(-\cup x)}"', from=1-1, to=2-1]
			\arrow[dashed, from=1-2, to=2-2]
		\end{tikzcd} \label{eq:shiftingcupup}
	\end{equation}
	which we would like to commute by filling in the dashed arrow. Indeed, the horizontal arrows are isomorphisms, so there is a unique dashed arrow making the diagram commute. On one hand, we see $\left(-\cup(-1)^ix\right)$ can fill the right arrow because any $a\in\widehat H^i(G,F(\op{Hom}_\ZZ(I_G,A)))$ has
	\[(a\cup x)\cup c=a\cup(x\cup c)=(-1)^ia\cup(c\cup x)=(a\cup c)\cup(-1)^ix.\]
	On the other hand, we claim that $\Phi_A^{(i+1)}$ can fill in the right arrow, which will finish by uniqueness of this arrow. For this, we draw the following very large diagram.
	\[\begin{tikzcd}
		{\widehat H^i(G,F(\op{Hom}_\ZZ(I_G,A)))} && {\widehat H^{i+1}(G,FA)} \\
		& {\widehat H^{i+1}(G,F(\op{Hom}_\ZZ(I_G,A)\otimes_\ZZ I_G))} \\
		\\
		{\widehat H^{r+i}(G,F'(\op{Hom}_\ZZ(I_G,A)))} && {\widehat H^{r+i+1}(G,F'A)} \\
		& {\widehat H^{r+i+1}(G,F'(\op{Hom}_\ZZ(I_G,A)\otimes_\ZZ I_G))}
		\arrow["{(-\cup c)_u}"{description, pos=0.2}, from=1-1, to=1-3]
		\arrow["{(-\cup c)_d}"{description}, from=1-1, to=2-2]
		\arrow["f"{description}, from=2-2, to=1-3]
		\arrow["{(-\cup x)}"{description, pos=0.3}, from=1-1, to=4-1]
		\arrow["{\Phi^{(i+1)}_{\op{Hom}_\ZZ(I_G,A)\otimes_\ZZ I_G}}"{description, pos=0.3}, from=2-2, to=5-2]
		\arrow["{\Phi^{(i+1)}_A}"{description, pos=0.3}, from=1-3, to=4-3]
		\arrow["{(-\cup c)_u'}"{description, pos=0.2}, from=4-1, to=4-3]
		\arrow["{(-\cup c)_d'}"{description}, from=4-1, to=5-2]
		\arrow["f"{description}, from=5-2, to=4-3]
	\end{tikzcd}\]
	Here, the $f$ maps are induced by the canonical evaluation map. We want the outer rectangle to commute, for which it suffices to show that each parallelogram and the small top and bottom triangles to commute.
	\begin{itemize}
		\item The left parallelogram commutes by definition of $\Phi_\bullet^{(i)}=(-\cup x)$.
		\item The right parallelogram commutes by naturality of $\Phi^{(i+1)}_\bullet$.
		\item The triangles commute because the two paths are both cup products $(-\cup c)$ followed by some evaluation maps, and we can compute that the evaluation maps are the same.
	\end{itemize}
	The above commutativity checks finishes the construction of $x$.
	
	It remains to induct downwards for the uniqueness of $x$. Well, suppose $x$ is unique for $s=i+1$, and we show uniqueness for $s=i$. Suppose we have two $x,x'\in\widehat H^r(G,\op{Hom}_\ZZ(X',X))$ such that $\Phi_\bullet^{(i)}=(-\cup x)=(-\cup x')$. However, using \eqref{eq:shiftingcupup}, we see that
	\[\Phi_\bullet^{(i+1)}=\big(-\cup(-1)^ix\big)=\big(-\cup(-1)^ix'\big),\]
	so the uniqueness for $i+1$ enforces $x=x'$.
\end{proof}

\section{Encoding Pairs}

In this section, we establish the basic theory of encoding modules and how to build them. We will show the equivalence of (a)--(d) in \Cref{thm:main} as well as \Cref{thm:reason-for-encoding}.

\subsection{Encoding Elements}
In the theory of periodic cohomology (e.g., see \cite[Section~XII.11]{cartan-eilenberg}), it is helpful to phrase the theory in terms of having some elements $x\in\widehat H^r(G,\ZZ)$ and $y\in\widehat H^{-r}(G,\ZZ)$ such that
\[x\cup y=[1]\in\widehat H^0(G,\ZZ).\]
The generalization to $r$-encoding pairs $(X,X')$ is to turn $\widehat H^r(G,\ZZ)$ into $\widehat H^r(G,\op{Hom}_\ZZ(X',X))$ and use the elements provided by \Cref{prop:allnaturaltransarecups}. This idea gives the following result.
\begin{proposition} \label{prop:better-encoding-pair}
    Let $G$ be a finite group, and fix $G$-modules $X$ and $X'$ and $r\in\ZZ$. The following are equivalent.
    \begin{enumerate}
        \item[(a)] $(X,X')$ is an $r$-encoding pair.
        \item[(b)] There exists $x\in\widehat H^r(G,\op{Hom}_\ZZ(X',X))$ such that
        \[(-\cup x)\colon\widehat H^i(G,\op{Hom}_\ZZ(X,-))\to\widehat H^{i+r}(G,\op{Hom}_\ZZ(X',-))\]
        is a natural isomorphism for all $i\in\ZZ$.
        \item[(c)] There exist $x\in\widehat H^r(G,\op{Hom}_\ZZ(X',X))$ and $x'\in\widehat H^{-r}(G,\op{Hom}_\ZZ(X,X'))$ such that $x'\cup x=[{\id_{X'}}]$ and $x\cup x'=[{\id_{X}}]$.
    \end{enumerate}
\end{proposition}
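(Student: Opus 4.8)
The implications to establish are (a)$\Rightarrow$(b)$\Rightarrow$(c)$\Rightarrow$(a), and I would run them in that cyclic order. The content of (a)$\Rightarrow$(b) is precisely an application of \Cref{prop:allnaturaltransarecups}: an $r$-encoding pair supplies, by definition, a natural isomorphism $\widehat H^i(G,\op{Hom}_\ZZ(X,-))\Rightarrow\widehat H^{i+r}(G,\op{Hom}_\ZZ(X',-))$ for \emph{some} $i\in\ZZ$. By \Cref{prop:allnaturaltransarecups} this natural transformation equals $(-\cup x)$ for a unique $x\in\widehat H^r(G,\op{Hom}_\ZZ(X',X))$; the hypothesis that it is an isomorphism at that one index $i$ then propagates to all indices by \Cref{prop:dimshiftcupisos}, applied to the shifting pair of \Cref{ex:shiftingpair}. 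So (b) holds with that same $x$.

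**Constructing the inverse class.** For (b)$\Rightarrow$(c), I already have $x\in\widehat H^r(G,\op{Hom}_\ZZ(X',X))$ with $(-\cup x)$ an isomorphism in every degree. To produce $x'$, apply $(-\cup x)^{-1}$ at the appropriate spot: consider the inverse natural isomorphism $\widehat H^{i}(G,\op{Hom}_\ZZ(X',-))\Rightarrow\widehat H^{i-r}(G,\op{Hom}_\ZZ(X,-))$, which is again a natural transformation of cohomology functors of the shape handled by \Cref{prop:allnaturaltransarecups} (with the roles of $X$ and $X'$ swapped and shift $-r$). Hence it equals $(-\cup x')$ for a unique $x'\in\widehat H^{-r}(G,\op{Hom}_\ZZ(X,X'))$. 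Now the two composites $(-\cup x)\circ(-\cup x')$ and $(-\cup x')\circ(-\cup x)$ are the identity natural transformations; by associativity of cup product these composites are $(-\cup(x'\cup x))$ and $(-\cup(x\cup x'))$ respectively, evaluated in the functors $\op{Hom}_\ZZ(X',-)$ and $\op{Hom}_\ZZ(X,-)$. Evaluating the first at $A=X'$ on the class $[\id_{X'}]$ gives $[\id_{X'}]\cup(x'\cup x)=[\id_{X'}]$, i.e.\ $x'\cup x=[\id_{X'}]$ (using that $[\id_{X'}]$ is the unit for this cup-product pairing); similarly $x\cup x'=[\id_X]$. This is exactly (c). One mild care point: I must check that $(-\cup x')\circ(-\cup x)=(-\cup(x\cup x'))$ as maps on $\widehat H^i(G,\op{Hom}_\ZZ(X,-))$, which is the standard associativity/functoriality compatibility of cup products with the composition natural transformation $\eta$; it should be folded in as a short lemma or cited.

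**Closing the loop.** The implication (c)$\Rightarrow$(b) is the reverse of the above computation: given $x,x'$ with $x'\cup x=[\id_{X'}]$ and $x\cup x'=[\id_X]$, the maps $(-\cup x)$ and $(-\cup x')$ are mutually inverse natural transformations by the same associativity identity, so $(-\cup x)$ is a natural isomorphism in every degree. And (b)$\Rightarrow$(a) is immediate from the definition of an $r$-encoding pair, since a natural isomorphism in all degrees is in particular one in some degree. Assembling these gives the full equivalence.

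**Expected main obstacle.** The genuinely non-formal steps are already black-boxed in \Cref{prop:allnaturaltransarecups} and \Cref{prop:dimshiftcupisos}, so the remaining difficulty is purely bookkeeping: keeping straight which functor ($\op{Hom}_\ZZ(X,-)$ vs.\ $\op{Hom}_\ZZ(X',-)$) and which shifting pair each natural transformation lives over, and verifying that the composition of two $\eta$-induced cup-product maps is the cup-product map induced by the composite $\eta$'s — equivalently, that $[\id_{X'}]$ acts as a two-sided unit and cup product is associative across these three Hom-functors. I would state that compatibility cleanly once (it is the associativity diagram for cup products together with the obvious identity $\id_X\circ\varphi=\varphi=\varphi\circ\id_{X'}$) and then the rest is mechanical.
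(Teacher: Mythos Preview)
Your proposal is correct and follows essentially the same route as the paper: (a)$\Rightarrow$(b) via \Cref{prop:allnaturaltransarecups} plus \Cref{prop:dimshiftcupisos}, (b)$\Rightarrow$(c) by applying \Cref{prop:allnaturaltransarecups} to the inverse natural transformation and reading off the identities, and (c)$\Rightarrow$(a) by exhibiting $(-\cup x)$ and $(-\cup x')$ as mutual inverses. The only cosmetic differences are that the paper extracts $x'\cup x=[\id_{X'}]$ from the \emph{uniqueness} clause of \Cref{prop:allnaturaltransarecups} rather than evaluating at $[\id_{X'}]$, and it goes (c)$\Rightarrow$(a) directly at index $0$ instead of routing through (b); your bookkeeping remarks about associativity and units are exactly what is needed and are implicit in the paper's argument.
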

\begin{proof}
    We show three implications.
    \begin{itemize}
        \item We show (a) implies (b). Note we are granted a natural isomorphism
        \[\Phi_\bullet\colon\widehat H^i(G,\op{Hom}_\ZZ(X,-))\to\widehat H^{i+r}(G,\op{Hom}_\ZZ(X',-))\]
        for some $i\in\ZZ$. By \Cref{prop:allnaturaltransarecups}, we can find $x\in\widehat H^r(G,\op{Hom}_\ZZ(X',X))$ such that $\Phi_\bullet=(-\cup x)$. However, this means $(-\cup x)$ is a natural isomorphism, so \Cref{prop:dimshiftcupisos} extends our isomorphism to all indices.
        \item We show (b) implies (c). To conjure $x'$, we note
        \[(-\cup x)\colon\widehat H^{-r}(G,\op{Hom}_\ZZ(X,-))\Rightarrow\widehat H^0(G,\op{Hom}_\ZZ(X',-))\]
        is a natural isomorphism and so has an inverse natural transformation. By \Cref{prop:allnaturaltransarecups}, we find $x'\in\widehat H^{-r}(G,\op{Hom}_\ZZ(X,X'))$ so that $(-\cup x')$ is the inverse.
        
        It remains to check the relations between $x$ and $x'$. The main point is that $(-\cup x)$ and $(-\cup x')$ are inverse natural isomorphisms by construction of $x'$. For example, the composite
        \[(-\cup x)\circ(-\cup x')=(-\cup x')\cup x=-\cup(x'\cup x)\]
        must be the identity natural transformation $\widehat H^0(G,\op{Hom}_\ZZ(X',-))\Rightarrow\widehat H^0(G,\op{Hom}_\ZZ(X',-))$. However, the identity is given by $-\cup[{\id_{X'}}]$, so we must have $x'\cup x=[{\id_{X'}}]$ by the uniqueness of \Cref{prop:allnaturaltransarecups}. The other check is similar.
        \item We show (c) implies (a). Working at index $0$, we claim that
    	\[\arraycolsep=1.4pt\begin{array}{rccc}
    		(-\cup x)\colon& \widehat H^0(G,\op{Hom}_\ZZ(X,-)) &\Rightarrow& \widehat H^{r}(G,\op{Hom}_\ZZ(X',-)) \\
    		(-\cup x')\colon& \widehat H^{r}(G,\op{Hom}_\ZZ(X',-)) &\Rightarrow& \widehat H^0(G,\op{Hom}_\ZZ(X,-))
    	\end{array}\]
    	are inverse natural transformations, which will finish. In one direction, we compute
    	\[(-\cup x)\circ(-\cup x')=(-\cup x')\cup x=-\cup(x'\cup x)=-\cup[{\id_{X'}}],\]
    	which is the identity. The other inverse check is similar.
    	\qedhere
    \end{itemize}
\end{proof}
\begin{example} \label{ex:ig-no-sign-error}
    It might appear odd that \Cref{prop:better-encoding-pair} has managed to assert that both $x\cup x'$ and $x'\cup x$ are identities even though $\cup$ might be anticommutative. Approximately speaking, this is possible because the maps
    \begin{equation}
        \arraycolsep=1.2pt\begin{array}{cl}
            \op{Hom}_\ZZ(X',X)\otimes_\ZZ\op{Hom}_\ZZ(X,X') &\to \op{Hom}_\ZZ(X,X) \\
            \op{Hom}_\ZZ(X,X')\otimes_\ZZ\op{Hom}_\ZZ(X',X) &\to \op{Hom}_\ZZ(X',X')
        \end{array} \label{eq:composition-map-for-cup}
    \end{equation}
    go in different orders, even in the case where $X=X'$. An example will be illuminating; continue from \Cref{ex:ig-shifting-up}, where we found some $[c]\in\widehat H^1(G,I_G)$. Now, define $f\colon I_G\to\ZZ$ by $f(g-1)=1$ for each $g\in G$ not equal to the identity $e\in G$. We check that $[f]\cup[c]=[1]$ and $[c]\cup[f]=[{\id_{I_G}}]$ for the suitably defined cup-product maps.
    \begin{itemize}
        \item We check $[f]\cup[c]=[1]$. Using \cite[Lemma~5.6]{bonn-lectures}, we can compute
        \[f\cup c=-\sum_{g\in G}(gf)(c(g))=-\sum_{g\in G}f\left(g^{-1}-1\right)=-\left(\left|G\right|-1\right),\]
        which is $[1]\in\widehat H^0(G,\ZZ)$ because Tate cohomology is $\left|G\right|$-torsion.
        \item We check $[c]\cup[f]=[{\id_{I_G}}]$. Again using \cite[Lemma~5.6]{bonn-lectures}, a computation shows that $(c\cup f)\in\op{Hom}_{\ZZ[G]}(I_G,I_G)$ is given by
        \[(c\cup f)(h-1)=\sum_{g\in G}c(g)\cdot(gf)(h-1)=-(h-1)\sum_{g\ne e}g,\]
        where $e\in G$ is the identity. Now, one can compute that the maps $I_G\to I_G$ which are right multiplication by an element of $I_G$ are in the image of the norm map on $\op{Hom}_{\ZZ}(I_G,I_G)$. As such, it follows that $[c]\cup[f]$ is the same class as $-\left(\left|G\right|-1\right)[{\id_{I_g}}]$, which is the same class as $[{\id_{I_G}}]$ because Tate cohomology is $\left|G\right|$-torsion.
    \end{itemize}
\end{example}
\begin{remark}
    It is perhaps notable that the case where $X=X'=\ZZ$ does make the maps in \eqref{eq:composition-map-for-cup} commute, so here we can in fact compare $x\cup x'$ with $x'\cup x$. This looks concerning when $\left|G\right|>2$, but it is actually still okay: what we see is that $x$ and $x'$ must live in even degrees for the cup product to commute. This is simply the statement that the cup product can only induce periodic cohomology with even periods; see \cite[Exercise~IV.9.1]{brown-cohomology}.
\end{remark}
The proof shows us that the elements $x$ and $x'$ store our ``encoding'' information, so they deserve names.
\begin{definition}
    Let $G$ be a finite group, and let $(X,X')$ be an $r$-encoding pair. Then elements $x\in\widehat H^r(G,\op{Hom}_\ZZ(X',X))$ and $x'\in\widehat H^{-r}(G,\op{Hom}_\ZZ(X,X'))$ satisfying $x'\cup x=[{\id_{X'}}]$ and $x\cup x'=[{\id_X}]$ are called an \textit{encoding element} and \textit{decoding element}, respectively.
\end{definition}
Given an $r$-encoding pair $(X,X')$, an encoding element $x$ is not uniquely determined from this data because the natural isomorphism is not given. However, the decoding element $x'$ is uniquely determined from $x$ because $x'$ is chosen so that $(-\cup x')$ is the inverse natural transformation of $(-\cup x)$. Thus, we are interested in investigating how unique an encoding element is.

To state this result correctly, we need to set up some structure. Let $X$ and $X'$ be $G$-modules. Note that $R\coloneqq\widehat H^0(G,\op{Hom}_\ZZ(X,X))$ is actually a ring with operation given by the cup product, which is the same as composition. Additionally, $R$ acts on $\widehat H^r(G,\op{Hom}_\ZZ(X',X))$ on the left by the cup product. Now, here is our result: encoding elements are unique up to an element in $R^\times$.
\begin{proposition} \label{prop:uniq-encoding}
    Let $G$ be a finite group, and let $(X,X')$ be an $r$-encoding pair. For brevity, set $R\coloneqq\widehat H^0(G,\op{Hom}_\ZZ(X,X))$ and $M\coloneqq\widehat H^r(G,\op{Hom}_\ZZ(X',X))$.
    \begin{enumerate}
        \item[(a)] Given two encoding elements $x,y\in M$, there is a unique $[\varphi]\in R^\times$ such that $y=[\varphi]\cup x$.
        \item[(b)] Given an encoding element $x\in M$ and some $[\varphi]\in R^\times$, then $y=[\varphi]\cup x$ is an encoding element.
    \end{enumerate}
\end{proposition}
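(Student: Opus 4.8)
The plan is to establish both parts by purely formal manipulation of cup products, using nothing beyond associativity of the cup product (together with associativity of composition), the unital behaviour of the classes $[{\id_X}]$ and $[{\id_{X'}}]$, and the defining relations $x\cup x'=[{\id_X}]$, $x'\cup x=[{\id_{X'}}]$ for an encoding element $x$ and its decoding element $x'$. In particular, no dimension-shifting or appeal to \Cref{prop:dimshiftcupisos} or \Cref{prop:better-encoding-pair} is required; we argue directly from the definition of encoding and decoding elements. One preliminary observation: alongside the left action of $R$ on $M$ by postcomposition, there is an evident right action of $R$ on $\widehat H^{-r}(G,\op{Hom}_\ZZ(X,X'))$ induced by the composition map $\op{Hom}_\ZZ(X,X')\otimes_\ZZ\op{Hom}_\ZZ(X,X)\to\op{Hom}_\ZZ(X,X')$, $g\otimes h\mapsto g\circ h$; I write $a\cup[\varphi]$ for it, and it is compatible with the ring structure of $R$ and with the cup products below by the usual associativities.

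For part (b), let $x$ be an encoding element with decoding element $x'$, fix $[\varphi]\in R^\times$, and set $y\coloneqq[\varphi]\cup x$. I would take $y'\coloneqq x'\cup[\varphi]^{-1}$ as a candidate decoding element for $y$ and then verify the two required relations by reassociating and absorbing the unit:
\[y\cup y'=([\varphi]\cup x)\cup(x'\cup[\varphi]^{-1})=[\varphi]\cup(x\cup x')\cup[\varphi]^{-1}=[\varphi]\cup[{\id_X}]\cup[\varphi]^{-1}=[{\id_X}],\]
and symmetrically $y'\cup y=x'\cup\bigl([\varphi]^{-1}\cup[\varphi]\bigr)\cup x=x'\cup[{\id_X}]\cup x=x'\cup x=[{\id_{X'}}]$. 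Hence $y$ is an encoding element, with decoding element $y'$.

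For part (a), let $x$ and $y$ be encoding elements with decoding elements $x'$ and $y'$, and set $[\varphi]\coloneqq y\cup x'\in R$. Reassociating gives $[\varphi]\cup x=y\cup(x'\cup x)=y\cup[{\id_{X'}}]=y$, which produces an element of $R$ representing $y$; to upgrade it to $R^\times$ I would exhibit $[\psi]\coloneqq x\cup y'$ as a two-sided inverse of $[\varphi]$, since $[\varphi]\cup[\psi]=y\cup(x'\cup x)\cup y'=y\cup y'=[{\id_X}]$ and likewise $[\psi]\cup[\varphi]=x\cup(y'\cup y)\cup x'=x\cup x'=[{\id_X}]$. For uniqueness, suppose $[\varphi_1]\cup x=[\varphi_2]\cup x$ with $[\varphi_1],[\varphi_2]\in R$ (a fortiori if they lie in $R^\times$); cupping on the right with $x'$ and using $x\cup x'=[{\id_X}]$ gives $[\varphi_1]=[\varphi_2]$.

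I do not expect a genuine obstacle; the only thing requiring care is bookkeeping — keeping track of which of the four functors $\op{Hom}_\ZZ(X,X)$, $\op{Hom}_\ZZ(X,X')$, $\op{Hom}_\ZZ(X',X)$, $\op{Hom}_\ZZ(X',X')$ each class lives in, confirming that each cup product lands in the group it is claimed to, and checking that no Koszul sign intervenes. The sign issue is automatic: at every step above we only reassociate a product or multiply by a degree-$0$ class, and never transpose two positive-degree factors. (One could also note that $[\varphi]=y\cup x'$ in part (a) is independent of the choice of $x'$ since the decoding element of $x$ is unique, but this is moot in view of the uniqueness just proved.)
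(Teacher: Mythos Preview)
Your proof is correct and takes essentially the same approach as the paper: both exploit associativity of the composition cup products and the defining relations of decoding elements, with the paper packaging the argument as ``$(-\cup x)\colon R\to M$ is an $R$-module isomorphism'' while you unwind this explicitly into the formula $[\varphi]=y\cup x'$ with inverse $[\psi]=x\cup y'$. Your part (b) is identical to the paper's, just with the verifications written out in full.
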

\begin{proof}
    The idea here is that an encoding element $x$ will induce an isomorphism $(-\cup x)\colon R\to M$ of $R$-modules.
    \begin{enumerate}
        \item[(a)] We have two $R$-module isomorphisms $(-\cup x),(-\cup y)\colon R\to M$, so composing gives an isomorphism
        \[R\stackrel{(-\cup y)}\to M\stackrel{(-\cup x)}\leftarrow R\]
        of $R$-modules. Passing the identity $[{\id_X}]\in R$ through gives some $[\varphi]\in R$ such that $y=[\varphi]\cup x$, and this construction is unique. Lastly, note the above composite is $(-\cup[\varphi])\colon R\to R$, which is an $R$-module isomorphism, so it follows $[\varphi]\in R^\times$.
        \item[(b)] Letting $x'$ be the decoding element for $x$, we see $x'\cup[\varphi]^{-1}$ is the decoding element for $y$.
        \qedhere
    \end{enumerate}
\end{proof}
\begin{remark} \label{rem:encoding-module-ring}
    In some circumstances, we can explicitly compute the units of $R=\widehat H^0(G,\op{Hom}_\ZZ(X,X))$. For example, when $X'=\ZZ$, then $R\cong\ZZ/|G|\ZZ$ generated by $[{\id_X}]$. This will fall out of theory we establish later: by \Cref{cor:compute-cohom-pairs}, encoding elements $x$ give an isomorphism
    \[\widehat H^0(G,\op{Hom}_\ZZ(X,X))\stackrel{(-\cup x)}\to\widehat H^r(G,X)\stackrel{(x\cup-)}\leftarrow\widehat H^0(G,\ZZ)=\ZZ/|G|\ZZ\]
    of groups sending $[{\id_X}]$ to $1$, and it is quick to check that this is an isomorphism of rings.
\end{remark}

\subsection{Building Encoding Pairs} \label{subsec:make-pairs}
With \Cref{prop:better-encoding-pair} in hand, we can now make lots of encoding pairs. We begin with some basic module operations.
\begin{lemma} \label{lem:encoding-pair-operations}
	Let $G$ be a finite group, and fix an $r$-encoding pair $(X,X')$ and an $s$-encoding pair $(Y,Y')$.
	\begin{enumerate}
	    \item[(a)] $(X',X)$ is a $(-r)$-encoding pair.
	    \item[(b)] If $r=s$, then $(X\oplus Y,X'\oplus Y')$ is an $r$-encoding pair.
	    \item[(c)] $(X\otimes_\ZZ Y,X'\otimes_\ZZ Y')$ is an $(r+s)$-encoding pair.
	    \item[(d)] If $X'=Y$, then $(X,Y')$ is an $(r+s)$-encoding pair.
	\end{enumerate}
\end{lemma}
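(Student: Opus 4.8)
The plan is to use the characterization of encoding pairs from \Cref{prop:better-encoding-pair}, specifically the equivalence between being an $r$-encoding pair and possessing an encoding element $x \in \widehat H^r(G,\op{Hom}_\ZZ(X',X))$ together with a decoding element $x' \in \widehat H^{-r}(G,\op{Hom}_\ZZ(X,X'))$ satisfying $x \cup x' = [\id_X]$ and $x' \cup x = [\id_{X'}]$. In each part, I would produce the relevant element directly by combining the encoding/decoding elements supplied by the hypotheses via cup products and the appropriate natural composition/tensor maps, and then verify the two identity relations. The routine but slightly delicate work is bookkeeping the functoriality of cup products and the identifications like $\op{Hom}_\ZZ(X\otimes_\ZZ Y, X\otimes_\ZZ Y)\cong\op{Hom}_\ZZ(X,X)\otimes_\ZZ\op{Hom}_\ZZ(Y,Y)$ composed appropriately.

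For part (a), the statement is immediate: if $x,x'$ are the encoding and decoding elements for $(X,X')$, then $x'$ is an encoding element and $x$ a decoding element for $(X',X)$, since the two required identities are exactly the same two identities with the roles swapped. For part (d), suppose $X'=Y$, with encoding/decoding pair $(x,x')$ for $(X,X')=(X,Y)$ in degree $r$ and $(y,y')$ for $(Y,Y')$ in degree $s$. I would form $x\cup y\in\widehat H^{r+s}(G,\op{Hom}_\ZZ(Y',X))$ using the composition pairing $\op{Hom}_\ZZ(Y,X)\otimes_\ZZ\op{Hom}_\ZZ(Y',Y)\to\op{Hom}_\ZZ(Y',X)$, and $y'\cup x'\in\widehat H^{-(r+s)}(G,\op{Hom}_\ZZ(X,Y'))$ using $\op{Hom}_\ZZ(X,Y)\otimes_\ZZ\op{Hom}_\ZZ(Y,Y')\to\op{Hom}_\ZZ(X,Y')$. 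The verification that $(x\cup y)\cup(y'\cup x')=[\id_X]$ reduces, by associativity of the cup product and compatibility of these composition pairings, to $x\cup(y\cup y')\cup x'=x\cup[\id_Y]\cup x'=x\cup x'=[\id_X]$; the other identity $(y'\cup x')\cup(x\cup y)=[\id_{Y'}]$ is symmetric. (Alternatively one can prove (d) purely at the level of natural transformations of cohomology functors by composing the two natural isomorphisms, which avoids any explicit cup-product manipulation.)

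For parts (b) and (c) I would likewise build the needed elements. In (b), with $r=s$, take $x\oplus y$ living in $\widehat H^r(G,\op{Hom}_\ZZ(X'\oplus Y',X\oplus Y))$ via the block-diagonal inclusion $\op{Hom}_\ZZ(X',X)\oplus\op{Hom}_\ZZ(Y',Y)\into\op{Hom}_\ZZ(X'\oplus Y',X\oplus Y)$ — here one uses that $\op{Hom}_\ZZ$ and $\widehat H^\bullet$ both commute with finite direct sums — and similarly $x'\oplus y'$; the identity relations then hold componentwise since $\id_{X\oplus Y}=\id_X\oplus\id_Y$ under this identification and there are no cross terms. In (c), take the tensor product: $x\otimes y\in\widehat H^{r+s}(G,\op{Hom}_\ZZ(X'\otimes_\ZZ Y',X\otimes_\ZZ Y))$, formed by combining the external cup product $\widehat H^r\otimes\widehat H^s\to\widehat H^{r+s}$ with the canonical map $\op{Hom}_\ZZ(X',X)\otimes_\ZZ\op{Hom}_\ZZ(Y',Y)\to\op{Hom}_\ZZ(X'\otimes_\ZZ Y',X\otimes_\ZZ Y)$, and similarly $x'\otimes y'$; the identities follow from $\id_{X\otimes Y}=\id_X\otimes\id_Y$ and the compatibility of this map with composition, modulo a sign $(-1)^{rs}$ from the graded-commutativity of the cup product which is absorbed because cohomology is $|G|$-torsion does not directly help — rather, one simply tracks the sign and observes $x\otimes y$ and $(-1)^{rs}(x'\otimes y')$ work, or chooses orderings so that the signs cancel, exactly as handled in \Cref{ex:ig-no-sign-error}.

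The main obstacle I anticipate is not conceptual but notational: making precise the compatibility between the cup product on Tate cohomology and these module-level pairing maps ($\op{Hom}$-composition, direct sum, tensor product), i.e. verifying that diagrams of the shape ``external cup product followed by pairing'' compose associatively and respect identities. Since all four pairings are instances of natural maps of bifunctors that are associative and unital at the level of $G$-modules, and the cup product is functorial and associative, these diagrams commute; but writing this cleanly, and in particular pinning down the signs in (c), is where care is required. Once that compatibility is granted, each of (a)--(d) is a one-line check against the criterion of \Cref{prop:better-encoding-pair}(c).
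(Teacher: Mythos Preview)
Your proposal is correct, but the paper takes a cleaner route that sidesteps the sign bookkeeping you anticipate. Rather than working with encoding/decoding elements via \Cref{prop:better-encoding-pair}(c), the paper stays at the level of natural isomorphisms of functors: part (a) is immediate from the definition; (b) follows by summing the two natural isomorphisms at a single index (legal since \Cref{prop:better-encoding-pair} says one index suffices); (d) is literal composition of the two natural isomorphisms; and (c) is handled by the tensor--hom adjunction, writing
\[
\widehat H^0(G,\op{Hom}_\ZZ(X\otimes_\ZZ Y,-))\simeq\widehat H^0(G,\op{Hom}_\ZZ(X,\op{Hom}_\ZZ(Y,-)))
\]
and then applying the $(X,X')$ isomorphism with $\op{Hom}_\ZZ(Y,-)$ in the slot, swapping to $\op{Hom}_\ZZ(Y,\op{Hom}_\ZZ(X',-))$, applying the $(Y,Y')$ isomorphism, and adjointing back. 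You even mention this alternative for (d); the paper uses it throughout.

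The payoff of the paper's approach is that no cup products are manipulated directly, so the graded-commutativity sign in (c) simply never appears. Your approach has the virtue of producing explicit encoding and decoding elements (e.g.\ $x\cup y$ for (d), $x\otimes y$ for (c)), which could be useful downstream, but at the cost of verifying the associativity/compatibility diagrams you flag and absorbing the $(-1)^{rs}$ into the decoding element in (c). Both are valid; the paper's is shorter.
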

\begin{proof}
	Note (a) is direct from the definitions. Continuing, (b) follows by summing the granted natural isomorphisms at (say) index $0$, which is legal by \Cref{prop:better-encoding-pair}.
	
	For (c) and (d), note \Cref{prop:better-encoding-pair} gives natural isomorphisms
	\[\widehat H^0(G,\op{Hom}_\ZZ(X,-))\simeq\widehat H^r(G,\op{Hom}_\ZZ(X',-))\qquad\text{and}\qquad\widehat H^r(G,\op{Hom}_\ZZ(Y,-))\simeq\widehat H^{r+s}(G,\op{Hom}_\ZZ(Y',-)).\]
    Thus, to get (c), we combine and use the tensor--hom adjunction, writing
    \begin{align*}
        \widehat H^0(G,\op{Hom}_\ZZ(X\otimes_\ZZ Y,-)) &\simeq \widehat H^0(G,\op{Hom}_\ZZ(X,\op{Hom}_\ZZ(Y,-))) \\
        &\simeq \widehat H^r(G,\op{Hom}_\ZZ(X',\op{Hom}_\ZZ(Y,-))) \\
        &\simeq \widehat H^r(G,\op{Hom}_\ZZ(Y,\op{Hom}_\ZZ(X',-))) \\
        &\simeq \widehat H^{r+s}(G,\op{Hom}_\ZZ(Y',\op{Hom}_\ZZ(X',-))) \\
        &\simeq \widehat H^{r+s}(G,\op{Hom}_\ZZ(X'\otimes_\ZZ Y',-)).
    \end{align*}
	Lastly, for (d), we compose the natural isomorphisms by writing
    \[\widehat H^0(G,\op{Hom}_\ZZ(X,-))\simeq\widehat H^r(G,\op{Hom}_\ZZ(X',-))=\widehat H^r(G,\op{Hom}_\ZZ(Y,-))\simeq\widehat H^{r+s}(G,\op{Hom}_\ZZ(Y',-)),\]
    as desired.
\end{proof}
\begin{example} \label{ex:ig-encodes}
    For any $G$-module $A$, we have the dimension-shifting short exact sequence
    \[0\to\op{Hom}_\ZZ(\ZZ,A)\to\op{Hom}_\ZZ(\ZZ[G],A)\to\op{Hom}_\ZZ(I_G,A)\to0,\]
    so it follows $(I_G,\ZZ)$ is a $1$-encoding pair. (This will also follow from \Cref{prop:encodingses}.) In fact, for any $r\ge0$, we see $(I_G^{\otimes r},\ZZ)$ is an $r$-encoding pair.
    
    As an aside, \Cref{cor:cupdown} tells us that the $1$-cocycle $x\in Z^1(G,I_G)$ given by $x(g)=g-1$ represents an encoding element of the pair $(I_G,\ZZ)$. For the decoding element, fix any $g\in G\setminus\{e\}$, and one can compute that the homomorphism $x'\in\op{Hom}_\ZZ(I_G,\ZZ)$ given by $x'(h-1)\coloneqq-1_{g=h}$ has $x'\cup x=[1]$ and thus represents the decoding element.
\end{example}
Additionally, we note we can take duals.
\begin{proposition} \label{prop:dualmodule}
	Let $G$ be a finite group, and let $M$ be a $G$-module, and define the contravariant functor $(-)^\lor\coloneqq\op{Hom}_\ZZ(-,M)$. If $(X,Y)$ be an $r$-encoding pair, then $(Y^\lor,X^\lor)$ is an $r$-encoding pair.
\end{proposition}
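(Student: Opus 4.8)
The plan is to transport an encoding element through the contravariant functor $(-)^\lor$, keeping careful track of how this interacts with cup products. First I would record how $(-)^\lor$ acts on morphisms: for $G$-modules $A$ and $B$, it sends $\varphi\colon A\to B$ to $\varphi^\lor\colon B^\lor\to A^\lor$, $f\mapsto f\circ\varphi$. Regarding $\op{Hom}_\ZZ(A,B)$ with its conjugation $G$-action, a short check of the actions shows that the resulting map $\op{Hom}_\ZZ(A,B)\to\op{Hom}_\ZZ(B^\lor,A^\lor)$ is $\ZZ[G]$-linear, hence induces a map $c\mapsto c^\lor$ on Tate cohomology; it is contravariantly functorial, with $(\psi\circ\varphi)^\lor=\varphi^\lor\circ\psi^\lor$ and $\id^\lor=\id$.

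Next I would establish the key compatibility with cup products. For $G$-modules $A,B,C$, write $\,\cdot\,$ for the cup product induced by the composition pairing $\op{Hom}_\ZZ(B,C)\otimes_\ZZ\op{Hom}_\ZZ(A,B)\to\op{Hom}_\ZZ(A,C)$, and likewise on the dual side for $\op{Hom}_\ZZ(B^\lor,A^\lor)\otimes_\ZZ\op{Hom}_\ZZ(C^\lor,B^\lor)\to\op{Hom}_\ZZ(C^\lor,A^\lor)$. The claim is that for $\gamma\in\widehat H^p(G,\op{Hom}_\ZZ(B,C))$ and $\beta\in\widehat H^q(G,\op{Hom}_\ZZ(A,B))$ one has $(\gamma\cdot\beta)^\lor=(-1)^{pq}\,\beta^\lor\cdot\gamma^\lor$. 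This drops out of naturality of the cup product in its coefficient modules together with graded-commutativity: the square relating the two composition pairings commutes only after precomposing with the swap of the two tensor factors, and that swap contributes the Koszul sign $(-1)^{pq}$ on cohomology.

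Then I would take an encoding element $x\in\widehat H^r(G,\op{Hom}_\ZZ(Y,X))$ and its decoding element $x'\in\widehat H^{-r}(G,\op{Hom}_\ZZ(X,Y))$ for $(X,Y)$, which exist by \Cref{prop:better-encoding-pair}, so that $x\cup x'=[\id_X]$ and $x'\cup x=[\id_Y]$. Applying the compatibility of the previous paragraph to these two identities, and using $\id^\lor=\id$, gives $(x')^\lor\cup x^\lor=(-1)^{r^2}[\id_{X^\lor}]$ and $x^\lor\cup(x')^\lor=(-1)^{r^2}[\id_{Y^\lor}]$, where these are the composition cup products landing in $\widehat H^0(G,\op{Hom}_\ZZ(X^\lor,X^\lor))$ and $\widehat H^0(G,\op{Hom}_\ZZ(Y^\lor,Y^\lor))$. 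Setting $\xi\coloneqq(-1)^r x^\lor\in\widehat H^r(G,\op{Hom}_\ZZ(X^\lor,Y^\lor))$ and $\xi'\coloneqq(x')^\lor\in\widehat H^{-r}(G,\op{Hom}_\ZZ(Y^\lor,X^\lor))$, the fact that $r(r+1)$ is even yields $\xi'\cup\xi=[\id_{X^\lor}]$ and $\xi\cup\xi'=[\id_{Y^\lor}]$, so $(Y^\lor,X^\lor)$ is an $r$-encoding pair by \Cref{prop:better-encoding-pair}(c). Equivalently, one could observe that $(-\cup x^\lor)$ and $(-\cup(x')^\lor)$ are, by associativity of the cup product, mutually inverse natural transformations up to the scalar $(-1)^{r^2}\in(\ZZ/|G|\ZZ)^\times$, hence natural isomorphisms.

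The main obstacle is the bookkeeping in the second step: correctly handling the reversal of composition order under $(-)^\lor$ and correctly placing the Koszul sign, and then confirming the resulting sign is harmless once absorbed into the twist $(-1)^r$ (using that $r(r+1)$ is even). Everything else is formal.
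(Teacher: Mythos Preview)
Your proof is correct and follows essentially the same approach as the paper's: pass the encoding and decoding elements through $(-)^\lor$ and verify the composition cup-product identities of \Cref{prop:better-encoding-pair}(c). You are more careful than the paper about the Koszul sign $(-1)^{r^2}$ arising from the order reversal---the paper simply writes $x^\lor\cup y^\lor=(y\cup x)^\lor$ from the commutative square \eqref{eq:dual-functoriality} without tracking the swap---but the sign is harmless for the conclusion either way.
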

\begin{proof}
	For any $G$-modules $A$ and $B$, note that functoriality of $(-)^\lor$ promises a morphism $\op{Hom}_\ZZ(A,B)\to\op{Hom}_\ZZ(B^\lor,A^\lor)$, explicitly by sending $f\colon A\to B$ to the map $f^\lor\colon g\mapsto(g\circ f)$. Further, functoriality tells us that
    \begin{equation}
        \begin{tikzcd}
        	{\op{Hom}_\ZZ(B,C)\otimes_\ZZ\op{Hom}_\ZZ(A,B)} & {\op{Hom}_\ZZ(A,C)} & {g\otimes f} & {g\circ f} \\
        	{\op{Hom}_\ZZ(C^\lor,B^\lor)\otimes_\ZZ\op{Hom}_\ZZ(B^\lor,A^\lor)} & {\op{Hom}_\ZZ(C^\lor,A^\lor)} & {g^\lor\otimes f^\lor} & {f^\lor\circ g^\lor}
        	\arrow[from=1-1, to=1-2]
        	\arrow[from=1-2, to=2-2]
        	\arrow[from=2-1, to=2-2]
        	\arrow[from=1-1, to=2-1]
        	\arrow[maps to, from=1-3, to=2-3]
        	\arrow[maps to, from=2-3, to=2-4]
        	\arrow[maps to, from=1-3, to=1-4]
        	\arrow[maps to, from=1-4, to=2-4]
        \end{tikzcd} \label{eq:dual-functoriality}
    \end{equation}
    commutes.
    
    We now proceed with the proof. We use \Cref{prop:better-encoding-pair} and dualize the elements in (c). Indeed, we have $x\in\widehat H^r(G,\op{Hom}_\ZZ(Y,X))$ and $y\in\widehat H^{-r}(G,\op{Hom}_\ZZ(X,Y))$ such that $x\cup y=[{\id_X}]$ and $y\cup x=[{\id_Y}]$. Passing $x$ through $(-)^\lor\colon\op{Hom}_\ZZ(Y,X)\to\op{Hom}_\ZZ(X^\lor,Y^\lor)$, we get some $x^\lor\in\widehat H^r(G,\op{Hom}_\ZZ(X^\lor,Y^\lor))$, and we construct $y^\lor$ similarly. Now, \eqref{eq:dual-functoriality} lets us compute
    \[x^\lor\cup y^\lor=(y\cup x)^\lor=[{\id_Y}]^\lor=[{\id_{Y^\lor}}].\]
    Checking $y^\lor\cup x^\lor=[{\id_{X^\lor}}]$ is similar and completes the proof.
\end{proof}
\begin{example} \label{ex:allencoderesexist}
	\Cref{ex:ig-encodes} established that $(I_G^{\otimes r},\ZZ)$ is an $r$-encoding pair for $r\ge0$. By \Cref{prop:dualmodule}, $\op{Hom}_\ZZ\left(I_G^{\otimes r},\ZZ\right)$ is a $(-r)$-encoding pair for $-r\le0$. Thus, we have established existence for $r$-encoding modules for all $r\in\ZZ$.
\end{example}
\begin{remark} \label{rem:encodingsubgroups} 
    More generally, any functor which commutes with cup products will also produce encoding pairs. For example, given a subgroup $H\subseteq G$, an $r$-encoding pair $(X,X')$ of $G$-modules yields an $r$-encoding pair of restricted $H$-modules $\left(\op{Res}X,\op{Res}X'\right)$. Indeed, using the encoding and decoding elements $x$ and $x'$, we can compute
    \[(\op{Res}x)\cup(\op{Res}x')=\op{Res}(x\cup x')=\op{Res}([{\id_X}])=[{\id_{\op{Res}X}}]\]
    and similar on the other side.
\end{remark}
\begin{example}
	It is not true that, if $X$ is an $r$-encoding $G_p$-module for all Sylow $p$-subgroups $G_p\subseteq G$, then $X$ is an $r$-encoding $G$-module. Indeed, take $X=\ZZ$ and $G=S_3$: all Sylow $p$-subgroups of $S_3$ are cyclic, so $\ZZ$ is a $2$-encoding module for all these subgroups. However, $S_3$ is not cyclic, so
	\[\widehat H^{-2}(G,\op{Hom}_\ZZ(X,\ZZ))\simeq\widehat H^{-2}(G,\ZZ)\simeq S_3/[S_3,S_3]\not\cong\ZZ/6\ZZ=\widehat H^0(G,\ZZ).\]
\end{example}
Taking duals might not look very impressive, but they essentially say that we do not have to look at torsion while studying encoding pairs because $\op{Hom}_\ZZ(\op{Hom}_\ZZ(X,\ZZ),\ZZ)$ has no torsion. Indeed, \Cref{prop:dualmodule} gives the following.
\begin{corollary} \label{cor:dual-twice}
    Let $G$ be a finite group, and define the contravariant functor $(-)^\lor\coloneqq\op{Hom}_\ZZ(-,\ZZ)$. If $(X,Y)$ is an $r$-encoding pair, then $(X^{\lor\lor},Y^{\lor\lor})$ is an $r$-encoding pair.
\end{corollary}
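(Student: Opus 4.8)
The plan is simply to apply \Cref{prop:dualmodule} twice, each time with the choice of auxiliary module $M=\ZZ$, so that the contravariant functor $(-)^\lor=\op{Hom}_\ZZ(-,\ZZ)$ in that proposition is exactly the one appearing here. The key observation is that $(-)^\lor$ reverses the order of an encoding pair, so applying it twice restores the original order; this is why the statement pairs $X^{\lor\lor}$ with $Y^{\lor\lor}$ rather than the reverse.

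Concretely: since $(X,Y)$ is an $r$-encoding pair, \Cref{prop:dualmodule} (with $M=\ZZ$) gives that $(Y^\lor,X^\lor)$ is an $r$-encoding pair. Applying \Cref{prop:dualmodule} once more to the $r$-encoding pair $(Y^\lor,X^\lor)$ — that is, taking ``$X$'' $=Y^\lor$ and ``$Y$'' $=X^\lor$ in the statement of that proposition — yields that $\big((X^\lor)^\lor,(Y^\lor)^\lor\big)=(X^{\lor\lor},Y^{\lor\lor})$ is an $r$-encoding pair, which is the claim.

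There is essentially no obstacle here; the only point requiring care is bookkeeping the order-reversal so that one ends up with $(X^{\lor\lor},Y^{\lor\lor})$ and not $(Y^{\lor\lor},X^{\lor\lor})$. (One could alternatively inline the argument of \Cref{prop:dualmodule}, dualizing the encoding and decoding elements $x$ and $y$ twice via the functoriality diagram \eqref{eq:dual-functoriality}, but invoking the proposition twice is cleaner and avoids repeating that computation.)
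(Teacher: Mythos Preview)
Your proposal is correct and matches the paper's intended argument: the paper gives no explicit proof, merely stating that \Cref{prop:dualmodule} ``gives the following,'' which is precisely the double application you describe. Your bookkeeping of the order-reversal is accurate.
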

\begin{remark} \label{rem:dual-twice-fg}
    We continue in the context of \Cref{cor:dual-twice}. Given a $\ZZ[G]$-module $A$, we let $A_t$ denote the $\ZZ$-torsion subgroup. In the case where $A$ is finitely generated over $\ZZ$, the canonical morphism $A/A_t\to A^{\lor\lor}$ is an isomorphism. Applying this to our theory, we see that a finitely generated $r$-encoding modules $X$ naturally produces a $\ZZ$-free $r$-encoding module $X/X_t$.
\end{remark}

\subsection{Using Free Resolutions} \label{subsec:free-res}
Encoding pairs also behave in short exact sequences.
\begin{proposition} \label{prop:encodingses}
	Let $G$ be a finite group, and let
	\[0\to X\to M\to X'\to 0\]
	be a $\ZZ$-split short exact sequence such that $M$ is an induced $G$-module. Then $(X,X')$ is a $1$-encoding pair.
\end{proposition}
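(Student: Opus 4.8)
The plan is to apply the contravariant functor $\op{Hom}_\ZZ(-,A)$ to the given short exact sequence and then run the long exact sequence in Tate cohomology, using that the middle term becomes induced and hence cohomologically trivial. Concretely, fix a $G$-module $A$. Since $0\to X\to M\to X'\to0$ is $\ZZ$-split, applying $\op{Hom}_\ZZ(-,A)$ yields a short exact sequence of $G$-modules
\[0\to\op{Hom}_\ZZ(X',A)\to\op{Hom}_\ZZ(M,A)\to\op{Hom}_\ZZ(X,A)\to0,\]
which is again $\ZZ$-split (a $\ZZ$-linear splitting of the original sequence dualizes to a $\ZZ$-linear splitting of this one). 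In particular this is an honest short exact sequence of $G$-modules, so it induces a long exact sequence in Tate cohomology with connecting maps $\delta\colon\widehat H^i(G,\op{Hom}_\ZZ(X,A))\to\widehat H^{i+1}(G,\op{Hom}_\ZZ(X',A))$.

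Next I would observe that $\op{Hom}_\ZZ(M,A)$ is induced: writing $M\cong\ZZ[G]\otimes_\ZZ B$ for some abelian group $B$, we have $\op{Hom}_\ZZ(M,A)\cong\op{Hom}_\ZZ(\ZZ[G]\otimes_\ZZ B,A)\cong\op{Hom}_\ZZ(\ZZ[G],\op{Hom}_\ZZ(B,A))$, exactly as in the computation showing $\op{Hom}_\ZZ(X,-)$ is shiftable. Induced modules have vanishing Tate cohomology in every degree, so $\widehat H^i(G,\op{Hom}_\ZZ(M,A))=0$ for all $i\in\ZZ$. Feeding this into the long exact sequence shows the connecting homomorphisms $\delta$ are isomorphisms for every $i$.

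Finally, I would note that the short exact sequence above is natural in $A$, and connecting homomorphisms in the long exact sequence of Tate cohomology are natural with respect to morphisms of short exact sequences of coefficient modules. Hence the isomorphisms $\delta$ assemble into a natural isomorphism of cohomological functors
\[\widehat H^i(G,\op{Hom}_\ZZ(X,-))\;\Rightarrow\;\widehat H^{i+1}(G,\op{Hom}_\ZZ(X',-)),\]
which is precisely the assertion that $(X,X')$ is a $1$-encoding pair.

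I do not expect a serious obstacle here; the only points requiring care are verifying that $\op{Hom}_\ZZ(M,A)$ is induced (routine, via the tensor--hom manipulation above) and that the whole construction is natural in $A$ so that the degreewise isomorphisms glue into a natural isomorphism of functors (this is the standard naturality of the Tate long exact sequence). One could alternatively deduce the statement from \Cref{cor:cupdown} or \Cref{ex:ig-encodes} together with \Cref{lem:encoding-pair-operations}, but the direct dimension-shifting argument above is cleanest.
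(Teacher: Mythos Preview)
Your proposal is correct and follows essentially the same argument as the paper: apply $\op{Hom}_\ZZ(-,A)$ to the $\ZZ$-split sequence, observe the middle term is induced and hence Tate-acyclic, and conclude that the connecting homomorphism is a natural isomorphism. In fact your write-up is slightly more careful than the paper's, since you record the dualized sequence in the correct contravariant order $0\to\op{Hom}_\ZZ(X',A)\to\op{Hom}_\ZZ(M,A)\to\op{Hom}_\ZZ(X,A)\to0$, which gives the boundary map in the direction $\widehat H^i(G,\op{Hom}_\ZZ(X,-))\to\widehat H^{i+1}(G,\op{Hom}_\ZZ(X',-))$ actually required by the definition of a $1$-encoding pair.
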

\begin{proof}
	Because the short exact sequence is $\ZZ$-split, we have the short exact sequence
	\[0\to\op{Hom}_\ZZ(X,A)\to\op{Hom}_\ZZ(M,A)\to\op{Hom}_\ZZ(X',A)\to0\]
	for any $G$-module $A$, which gives the isomorphism
	\[\delta_A\colon\widehat H^0(G,\op{Hom}_\ZZ(X',A))\to\widehat H^1(G,\op{Hom}_\ZZ(X,A))\]
	because $\op{Hom}_\ZZ(M,A)$ is induced. The $\delta_A$ assemble into a natural isomorphism $\delta_\bullet\colon\widehat H^0(G,\op{Hom}_\ZZ(X',-))\Rightarrow\widehat H^1(G,\op{Hom}_\ZZ(X,-))$, which is what we wanted.
\end{proof}
\begin{example}
	Fix a finite group $G$ generated by $S\coloneqq\langle\sigma_1,\ldots,\sigma_n\rangle$, and let $M\coloneqq\ZZ[G]^{\#S}$ have basis $\{e_i\}_{i=1}^m$. Then there is a projection $\pi\colon\ZZ[G]^{|G|}\onto I_G$ by sending $e_i\mapsto(\sigma_i-1)$, giving the short exact sequence
	\[0\to\ker\pi\to\ZZ[G]^{\#S}\to I_G\to0.\]
	This short exact sequence is $\ZZ$-split because $I_G$ is $\ZZ$-free. Because $\ZZ[G]^{\#S}\cong\ZZ[G]\otimes_\ZZ\ZZ^{\#S}$ is induced, we see $(\ker\pi,I_G)$ is a $1$-encoding pair by \Cref{prop:encodingses}. But $(I_G,\ZZ)$ is a $1$-encoding pair, so $(\ker\pi,\ZZ)$ is a $2$-encoding pair by \Cref{lem:encoding-pair-operations}.
\end{example}
We are now ready to prove \Cref{thm:reason-for-encoding}.
\begin{proof}[Proof of \Cref{thm:reason-for-encoding}]
    If such an exact sequence exists, then $(X,X')$ is an $r$-encoding pair by inductively applying \Cref{prop:encodingses}.
    
    It remains to show the converse. Note that $X$ and $X'$ are both $\ZZ$-free, so \cite[Proposition~III.2.2]{brown-cohomology} promises natural isomorphisms
    \[\op{Ext}^i_{\ZZ[G]}(X,-)\simeq H^i(G,\op{Hom}_\ZZ(X,-))\qquad\text{and}\qquad\op{Ext}^i_{\ZZ[G]}(X',-)\simeq H^i(G,\op{Hom}_\ZZ(X',-))\]
    for $i\in\NN$. If we let $x\in H^r(G,\op{Hom}_\ZZ(X,X'))$ denote an encoding element, then the fact that $(X,X')$ is an $r$-encoding pair implies that the induced cup-product maps
    \[(-\cup x)\colon\op{Ext}_{\ZZ[G]}^i(X,-)\Rightarrow\op{Ext}_{\ZZ[G]}^{i+r}(X',-)\]
    is an isomorphism for $i>0$ and epic (on $G$-modules) at $i=0$. The existence of the needed exact sequence now follows from \cite[Theorem~1.2]{wall-almost-conjecture}.
\end{proof}
We close this section with an example exhibiting the ``encoding'' mentioned in the introduction.
\begin{example}
    Let $G$ be a finite abelian group, where $G=\bigoplus_{i=1}^mG_i$, where $G_i=\langle\sigma_i\rangle$, where $\sigma_i\in G$ is an element of order $n_i$. Let $(W_i,d_i)$ be the free resolution of $\ZZ[G_i]$-modules given by
    \[W_i\colon\cdots\xrightarrow{\sigma_i-1}\ZZ[G_i]\xrightarrow{N_i}\ZZ[G_i]\xrightarrow{\sigma_i-1}\ZZ[G_i]\to\ZZ\to0,\]
    where $N_i=\sum_{k=0}^{n_i-1}\sigma_i^k$. Let $(W,d)$ be the tensor product complex of the $(W_i,d_i)$, which gives a $\ZZ[G]$-free resolution of $\ZZ$. \Cref{thm:reason-for-encoding} now tells us that $\ker d_r$ is an $r$-encoding module. Tracking this through at degree $2$ is able to recover abelian crossed products, in the sense of \cite{abelian-crossed}; see \cite[pp.~421--423]{cohom-abelian-crossed} for details.
\end{example}

\subsection{Cohomological Equivalence} \label{subsec:cohom-equiv}
It might be the case that many different $G$-modules $X$ are doing essentially the same encoding $\widehat H^0(G,\op{Hom}_\ZZ(X,-))\Rightarrow\widehat H^r(G,-)$, so it will be helpful to have language for this phenomenon.
\begin{definition}
	Let $G$ be a finite group. We say that two $G$-modules $X$ and $X'$ are \textit{cohomologically equivalent} if and only if $(X,X')$ is a $0$-encoding pair.
\end{definition}
Note that cohomological equivalence is an equivalence relation by \Cref{lem:encoding-pair-operations}. It also respects the module operations discussed in \Cref{subsec:make-pairs}.
\begin{remark}
    Fixing a $G$-module $X'$ and $r\in\ZZ$, we note that the set of $G$-modules $X$ such that $(X,X')$ is an $r$-encoding pair, if nonempty, forms exactly one equivalence class up to cohomological equivalence. Indeed, if $(A,X')$ is an $r$-encoding pair, then $(B,X')$ is an $r$-encoding pair if and only if $(A,B)$ is a $0$-encoding pair.
\end{remark}
We take a moment to give a more concrete definition of cohomological equivalence.
\begin{proposition} \label{prop:cohomologicaldef}
	Let $G$ be a finite group, and let $X$ and $X'$ be $G$-modules. Then $X$ and $X'$ are cohomologically equivalent if and only if there exist $[\varphi]\in\widehat H^0(G,\op{Hom}_\ZZ(X',X))$ and $[\varphi']\in\widehat H^0(G,\op{Hom}_\ZZ(X,X'))$ such that
	\[[\varphi\circ\varphi']=[{\id_X}]\in\widehat H^0(G,\op{Hom}_\ZZ(X,X))\qquad\text{and}\qquad[\varphi'\circ\varphi]=[{\id_{X'}}]\in\widehat H^0(G,\op{Hom}_\ZZ(X',X')).\]
\end{proposition}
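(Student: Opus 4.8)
The plan is to deduce this essentially directly from \Cref{prop:better-encoding-pair} applied with $r=0$, together with the elementary observation that degree-zero cup products induced by a composition pairing are literally composition of representatives.

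First I would unwind the definition: by \Cref{subsec:cohom-equiv}, $X$ and $X'$ are cohomologically equivalent if and only if $(X,X')$ is a $0$-encoding pair. Applying the equivalence of (a) and (c) in \Cref{prop:better-encoding-pair} with $r=0$, this holds if and only if there exist classes $x\in\widehat H^0(G,\op{Hom}_\ZZ(X',X))$ and $x'\in\widehat H^0(G,\op{Hom}_\ZZ(X,X'))$ with $x'\cup x=[{\id_{X'}}]$ and $x\cup x'=[{\id_X}]$, where the cup products are those induced by the composition shifting pair of \Cref{ex:shiftingpair}. Since $\widehat H^0(G,M)=M^G/N_G M$, any such $x$ and $x'$ are represented by honest $G$-module homomorphisms $\varphi\colon X'\to X$ and $\varphi'\colon X\to X'$, so it remains only to match the cup products $x'\cup x$ and $x\cup x'$ with $[\varphi'\circ\varphi]$ and $[\varphi\circ\varphi']$.

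The one point that is not already packaged in \Cref{prop:better-encoding-pair} is therefore this identification: for $G$-module homomorphisms $\psi\colon A\to B$ and $\chi\colon C\to A$, the cup-product map
\[\widehat H^0(G,\op{Hom}_\ZZ(A,B))\otimes_\ZZ\widehat H^0(G,\op{Hom}_\ZZ(C,A))\to\widehat H^0(G,\op{Hom}_\ZZ(C,B))\]
induced by $\eta$ sends $[\psi]\otimes[\chi]$ to $[\psi\circ\chi]$. This is because the degree-$(0,0)$ cup product $\widehat H^0(G,P)\otimes_\ZZ\widehat H^0(G,Q)\to\widehat H^0(G,P\otimes_\ZZ Q)$ is the evident pairing descending from $P^G\otimes_\ZZ Q^G\to(P\otimes_\ZZ Q)^G$, and post-composing with $\eta$ (which is composition of homomorphisms) yields $\psi\circ\chi$. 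I would justify this either by citing the standard description of the degree-zero cup product (e.g.\ in \cite{brown-cohomology}) or by specializing the explicit cocycle formula of \cite[Lemma~5.6]{bonn-lectures} to degree $0$. This is the only computation in the proof, and it is routine.

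Putting these together: taking $[\varphi]:=x$ and $[\varphi']:=x'$ gives $[\varphi'\circ\varphi]=x'\cup x=[{\id_{X'}}]$ and $[\varphi\circ\varphi']=x\cup x'=[{\id_X}]$ for one direction; conversely, given $[\varphi]$ and $[\varphi']$ with the stated composition identities, setting $x:=[\varphi]$ and $x':=[\varphi']$ supplies exactly the data of \Cref{prop:better-encoding-pair}(c) with $r=0$, so $(X,X')$ is a $0$-encoding pair. I do not anticipate any real obstacle; the ``hard'' step is just the bookkeeping identification of the degree-zero cup product with composition, which is standard.
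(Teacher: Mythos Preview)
Your proposal is correct and follows exactly the paper's approach: apply \Cref{prop:better-encoding-pair} with $r=0$ and identify the degree-zero cup product (induced by the composition pairing of \Cref{ex:shiftingpair}) with composition of representatives. The paper compresses this into a single sentence, but your unpacking of the ``only'' real step---that $[\varphi]\cup[\varphi']=[\varphi\circ\varphi']$ in degree zero---is precisely what the paper's one-liner asserts.
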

\begin{proof}
    This follows from \Cref{prop:better-encoding-pair} and noting that $[\varphi\circ\varphi']=[\varphi]\cup[\varphi']$ and $[\varphi'\circ\varphi]=[\varphi']\cup[\varphi]$.
\end{proof}
Even though \Cref{prop:cohomologicaldef} suggests modules should be approximately isomorphic to be cohomologically equivalent, it can still be relatively difficult to tell if two modules are cohomologically equivalent. However, when the relevant $G$-modules are $\ZZ$-free, we can handle this somewhat. We begin by studying when a $G$-module is cohomologically equivalent to $0$.
\begin{proposition} \label{prop:bettercohomequivzero}
	Let $G$ be a finite group, and let $X$ be a $G$-module. The following are equivalent.
	\begin{enumerate}
	    \item[(a)] $X$ is cohomologically equivalent to $0$.
	    \item[(b)] $\widehat H^0(G,\op{Hom}_\ZZ(X,X))=0$.
	    \item[(c)] $X$ is induced.
	\end{enumerate}
\end{proposition}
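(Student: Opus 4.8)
The plan is to establish the cycle of implications (c) $\Rightarrow$ (a) $\Rightarrow$ (b) $\Rightarrow$ (c), with all the real content in the last step. For (c) $\Rightarrow$ (a): if $X\cong\ZZ[G]\otimes_\ZZ A$ is induced, then for any $G$-module $M$ the tensor--hom adjunction gives $\op{Hom}_\ZZ(X,M)\cong\op{Hom}_\ZZ(\ZZ[G],\op{Hom}_\ZZ(A,M))$, which is again induced, so $\widehat H^\bullet(G,\op{Hom}_\ZZ(X,M))=0$. Hence the functor $\widehat H^0(G,\op{Hom}_\ZZ(X,-))$ is identically zero and is therefore (trivially) naturally isomorphic to $\widehat H^0(G,\op{Hom}_\ZZ(0,-))=0$, so $(X,0)$ is a $0$-encoding pair.

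For (a) $\Rightarrow$ (b) I would apply \Cref{prop:better-encoding-pair}(c) to the $0$-encoding pair $(X,0)$: it produces classes $x\in\widehat H^0(G,\op{Hom}_\ZZ(0,X))$ and $x'\in\widehat H^0(G,\op{Hom}_\ZZ(X,0))$ with $x\cup x'=[{\id_X}]$. Both of these groups are zero, so $[{\id_X}]=0$ in the ring $R:=\widehat H^0(G,\op{Hom}_\ZZ(X,X))$; since $[{\id_X}]$ is the unit of $R$, this forces $R=0$, which is (b). (Equivalently: if $\id_X=N_G(u)$ for a $\ZZ$-linear $u$, then every $G$-endomorphism $\varphi$ of $X$ satisfies $\varphi=\varphi\circ N_G(u)=N_G(\varphi\circ u)$ by $G$-linearity of $\varphi$, so $\widehat H^0$ vanishes.)

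The substance is (b) $\Rightarrow$ (c). Unwinding Tate cohomology, $\widehat H^0(G,\op{Hom}_\ZZ(X,X))=\op{Hom}_{\ZZ[G]}(X,X)/N_G\op{Hom}_\ZZ(X,X)$, so (b) supplies a $\ZZ$-linear $u\colon X\to X$ with $\id_X=N_G(u)=\sum_{g\in G}g\circ u\circ g^{-1}$. Let $X_0$ denote the underlying abelian group of $X$ with trivial $G$-action, so that $\ZZ[G]\otimes_\ZZ X_0$ is induced (with $G$ acting on the left tensor factor). There is a $G$-linear surjection $\pi\colon\ZZ[G]\otimes_\ZZ X_0\to X$, $g\otimes x\mapsto gx$, and I would check that $s\colon X\to\ZZ[G]\otimes_\ZZ X_0$ defined by $s(x):=\sum_{g\in G}g\otimes u(g^{-1}x)$ is a well-defined $G$-map (a one-line reindexing $g\mapsto hg$) satisfying $\pi\circ s=N_G(u)=\id_X$. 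Thus $X$ is a $G$-direct summand of the induced module $\ZZ[G]\otimes_\ZZ X_0$.

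The step I expect to be the main obstacle is finishing (b) $\Rightarrow$ (c): upgrading ``$X$ is a $G$-direct summand of an induced module'' to ``$X$ is induced.'' The norm identity only manufactures a splitting, and a priori a direct summand of an induced module need not itself be of the form $\ZZ[G]\otimes_\ZZ A$. To close the gap I would try to read off a suitable abelian group $A$ directly from $u$ — or absorb the complementary summand into a large free module by a swindle-type argument — and it is precisely here that I expect the genuine difficulty (or the need for an extra hypothesis on $X$, such as being $\ZZ$-free or finitely generated) to arise.
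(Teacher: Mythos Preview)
Your cycle (c) $\Rightarrow$ (a) $\Rightarrow$ (b) is correct and in the same spirit as the paper's treatment of (a) $\Leftrightarrow$ (b). The paper's (a) $\Rightarrow$ (b) is marginally quicker --- it just evaluates the natural isomorphism $\widehat H^0(G,\op{Hom}_\ZZ(X,-))\simeq\widehat H^0(G,\op{Hom}_\ZZ(0,-))$ at $X$ --- but your route through the encoding and decoding elements of \Cref{prop:better-encoding-pair} is equally valid and arguably more in keeping with the paper's framework.

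For (b) $\Rightarrow$ (c) you actually do more than the paper: the paper proves nothing here and simply cites \cite[Proposition~2.2]{rim-group-cohomology} for the full equivalence (b) $\Leftrightarrow$ (c). Your construction of the $G$-splitting $s(x)=\sum_{g}g\otimes u(g^{-1}x)$ from the norm identity $\id_X=N_G(u)$ is the standard opening move toward Rim's result, and you have correctly isolated the remaining content: passing from ``$G$-direct summand of an induced module'' to ``induced'' (in the paper's sense of being isomorphic to some $\ZZ[G]\otimes_\ZZ A$) is exactly what Rim's proposition supplies, and it is genuinely not a formality. Your suspicion that extra hypotheses such as $\ZZ$-freeness or finite generation might be required is unfounded, however --- Rim's result holds for arbitrary $G$-modules over a finite $G$ --- so the obstacle you flagged is real but does not call for narrowing the statement.
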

\begin{proof}
    The equivalence of (b) and (c) follows from \cite[Proposition~2.2]{rim-group-cohomology}, so we show (a) and (b) are equivalent. Suppose $\widehat H^0(G,\op{Hom}_\ZZ(X,X))=0$. We use \Cref{prop:cohomologicaldef}: set $\varphi\colon 0\to X$ and $\varphi'\colon X\to0$ equal to the zero maps. Then note that $\op{Hom}_\ZZ(X,X)$ is induced, and $\op{Hom}_\ZZ(0,0)=0$, so
	\[\widehat H^0(G,\op{Hom}_\ZZ(X,X))=\widehat H^0(G,\op{Hom}_\ZZ(0,0))=0,\]
	making the checks on $\varphi$ and $\varphi'$ both trivial.
	
	Conversely, if $X$ is cohomologically equivalent to $0$, then there is a natural isomorphism
	\[\widehat H^0(G,\op{Hom}_\ZZ(X,-))\simeq\widehat H^0(G,\op{Hom}_\ZZ(0,-)),\]
	so it follows $\widehat H^0(G,\op{Hom}_\ZZ(X,X))=0$ by plugging in $X$.
\end{proof}
\begin{example} \label{ex:ziexample}
	It is not in general true that two $G$-modules $X$ and $X'$ are cohomologically equivalent if and only if $\widehat H^0(G,\op{Hom}_\ZZ(X,X))\cong\widehat H^0(G,\op{Hom}_\ZZ(X',X'))$. For example, we can let $G=\langle\sigma\rangle\cong\ZZ/2\ZZ$ act on $X=\ZZ$ trivially and on $X'=\ZZ i$ by $\sigma\colon i\mapsto-i$.
\end{example}
\begin{corollary} \label{cor:cohom-zero-to-zg-proj}
	Let $G$ be a finite group, and let $X$ be a $\ZZ$-free $G$-module. Then $X$ cohomologically equivalent to $0$ if and only if $X$ is $\ZZ[G]$-projective.
\end{corollary}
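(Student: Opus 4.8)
The plan is to reduce the statement to the single vanishing condition $\widehat H^0(G,\op{Hom}_\ZZ(X,X))=0$ and then bring in $\ZZ$-freeness. First I would invoke \Cref{prop:bettercohomequivzero}: $X$ is cohomologically equivalent to $0$ if and only if $\widehat H^0(G,\op{Hom}_\ZZ(X,X))=0$. Unwinding the definition of Tate cohomology in degree $0$, this vanishing holds exactly when $\id_X$ lies in the image of the norm map on $\op{Hom}_\ZZ(X,X)$, i.e.\ when there is a $\ZZ$-linear map $\varphi\colon X\to X$ with $\id_X=\sum_{g\in G}g\varphi g^{-1}$. So it suffices to show that, for $\ZZ$-free $X$, this last condition is equivalent to $\ZZ[G]$-projectivity.

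For the forward direction I would use $\varphi$ to write down the map $s\colon X\to\ZZ[G]\otimes_\ZZ X$, $x\mapsto\sum_{g\in G}g\otimes\varphi(g^{-1}x)$; a short check shows $s$ is $\ZZ[G]$-linear and splits the multiplication map $\mu\colon\ZZ[G]\otimes_\ZZ X\to X$, so $X$ is a direct summand of $\ZZ[G]\otimes_\ZZ X$. Here is where $\ZZ$-freeness is essential: picking a $\ZZ$-basis identifies $X\cong\bigoplus_S\ZZ$ as abelian groups, hence $\ZZ[G]\otimes_\ZZ X\cong\bigoplus_S\ZZ[G]$ is a \emph{free} $\ZZ[G]$-module, and a direct summand of a free $\ZZ[G]$-module is $\ZZ[G]$-projective.

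For the converse, suppose $X$ is $\ZZ[G]$-projective and fix $\ZZ[G]$-linear maps $\iota\colon X\to F$, $\pi\colon F\to X$ with $\pi\iota=\id_X$ for some free module $F=\bigoplus_S\ZZ[G]$. I would then exhibit $\id_X$ as a norm directly: let $e\colon F\to F$ be the $\ZZ$-linear map that on each copy of $\ZZ[G]$ picks off the coefficient of the identity element of $G$ (a projection onto $\ZZ\cdot 1$); a direct computation gives $\sum_{g\in G}geg^{-1}=\id_F$. Using the $\ZZ[G]$-linearity of $\iota$ and $\pi$ to slide the group elements past them, $\sum_{g\in G}g(\pi e\iota)g^{-1}=\pi\bigl(\sum_{g\in G}geg^{-1}\bigr)\iota=\pi\,\id_F\,\iota=\id_X$, so $\id_X$ is in the image of the norm and \Cref{prop:bettercohomequivzero} finishes. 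One could instead note that $\op{Hom}_\ZZ(X,X)$ is a direct summand of the coinduced, hence cohomologically trivial, module $\op{Hom}_\ZZ(F,X)$.

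There is not much of an obstacle here; the one point worth emphasizing is that $\ZZ$-freeness is doing real work. Without it the vanishing $\widehat H^0(G,\op{Hom}_\ZZ(X,X))=0$ only shows $X$ is a direct summand of \emph{some} induced module $\ZZ[G]\otimes_\ZZ A$ (i.e.\ relatively projective), which need not be $\ZZ[G]$-projective — for instance $\QQ$ is a relatively projective $\ZZ$-module that is not projective. The remaining work is just keeping the norm-and-conjugation bookkeeping straight in the two directions.
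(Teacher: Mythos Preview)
Your proof is correct, but it takes a different route from the paper's. Both proofs start from \Cref{prop:bettercohomequivzero}, but diverge from there. For the forward direction, the paper argues functorially: cohomological equivalence to $0$ gives $\widehat H^i(G,\op{Hom}_\ZZ(X,-))=0$ for \emph{all} $i$, so in particular $H^1(G,\op{Hom}_\ZZ(X,A))=0$ for every $A$; combined with $\ZZ$-freeness of $X$ (to get exactness of $\op{Hom}_\ZZ(X,-)$ on arbitrary short exact sequences), this shows $\op{Hom}_{\ZZ[G]}(X,-)$ is exact, hence $X$ is projective. You instead work entirely at degree $0$, translating $\widehat H^0(G,\op{Hom}_\ZZ(X,X))=0$ into a norm decomposition of $\id_X$ and building an explicit $\ZZ[G]$-linear section of $\ZZ[G]\otimes_\ZZ X\to X$. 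For the backward direction the paper simply invokes \Cref{prop:bettercohomequivzero} (projective $\Rightarrow$ summand of free $\Rightarrow$ induced), while you give the explicit norm presentation of $\id_X$.

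Your approach is essentially the classical Gasch\"utz--Higman relative projectivity criterion made explicit; it is more constructive and shows directly that the single vanishing $\widehat H^0(G,\op{Hom}_\ZZ(X,X))=0$ already suffices, without appealing to the other degrees. The paper's argument is shorter and more in the spirit of the surrounding theory, exploiting the natural-isomorphism formulation of cohomological equivalence rather than unpacking the norm map.
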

\begin{proof}
	This is \cite[Theorem~4.11]{rim-group-cohomology}, though we are providing a different proof. The backward direction follows from \Cref{prop:bettercohomequivzero}, so we focus on the forward direction. It suffices to show that $\op{Hom}_{\ZZ[G]}(X,-)$ is exact. Well, given a short exact sequence
	\[0\to A\to B\to C\to 0\]
	of $G$-modules, the fact that $X$ is $\ZZ$-free implies
	\[0\to\op{Hom}_\ZZ(X,A)\to\op{Hom}_\ZZ(X,B)\to\op{Hom}_\ZZ(X,C)\to0\]
	is also exact. Applying $(-)^G$ finishes because $H^1(G,\op{Hom}_\ZZ(X,A))=0$.
\end{proof}
Lastly, here is our more general $\ZZ$-free result.
\begin{proposition}
    Let $G$ be a finite group, and let $X$ and $X'$ be $\ZZ$-free $G$-modules. Then $X$ and $X'$ are cohomologically equivalent if and only if there exist $\ZZ[G]$-projective modules $P$ and $P'$ such that $X\oplus P\cong X'\oplus P'$.
\end{proposition}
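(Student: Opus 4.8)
The plan is to prove the two implications separately; throughout I write $\sim$ for cohomological equivalence, which by \Cref{lem:encoding-pair-operations} is an equivalence relation compatible with direct sums. The easy direction is the ``if'': given $X\oplus P\cong X'\oplus P'$ with $P,P'$ projective over $\ZZ[G]$, note that every $\ZZ[G]$-projective module is $\ZZ$-free, so \Cref{cor:cohom-zero-to-zg-proj} gives $P\sim0$ and $P'\sim0$; hence $X\sim X\oplus P\cong X'\oplus P'\sim X'$, and transitivity finishes.

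For the converse, suppose $X\sim X'$. First I would invoke \Cref{prop:cohomologicaldef} to get $G$-homomorphisms $\varphi\colon X'\to X$ and $\varphi'\colon X\to X'$ with $\id_X-\varphi\varphi'$ and $\id_{X'}-\varphi'\varphi$ both in the image of the relevant norm maps. Spelling these norms out, $\id_X-\varphi\varphi'$ factors as $X\xrightarrow{i_1}P_1\xrightarrow{j_1}X$ and $\id_{X'}-\varphi'\varphi$ factors as $X'\xrightarrow{i_2}P_2\xrightarrow{j_2}X'$, where $P_1:=\ZZ[G]\otimes_\ZZ X$ and $P_2:=\ZZ[G]\otimes_\ZZ X'$; these are $\ZZ[G]$-free precisely because $X$ and $X'$ are $\ZZ$-free, and this is the main place where $\ZZ$-freeness is used. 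Next I would form $\tilde f:=\binom{\varphi'}{i_1}\colon X\to X'\oplus P_1$ and $\tilde g:=\begin{pmatrix}\varphi&j_1\end{pmatrix}\colon X'\oplus P_1\to X$, for which $\tilde g\tilde f=\varphi\varphi'+j_1i_1=\id_X$. Thus $\tilde f$ is a split monomorphism, so with $K:=\ker\tilde g$ we get an honest isomorphism $X'\oplus P_1\cong X\oplus K$, and $K$ is $\ZZ$-free as a submodule of $X'\oplus P_1$.

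The hard part is to show $K$ is $\ZZ[G]$-projective; granting that, $P:=K$ and $P':=P_1$ complete the argument. Let $\iota_{X'},\iota_{P_1}$ be the inclusions of the two summands of $X'\oplus P_1$, let $\iota_K\colon K\hookrightarrow X'\oplus P_1$ be the inclusion, and let $\pi_K\colon X'\oplus P_1\to K$ be the projection with kernel $\im\tilde f$, so that $\iota_K\pi_K=\id-\tilde f\tilde g$ and $\pi_K\iota_K=\id_K$. The key observation is the elementary matrix computation
\[ (\id-\tilde f\tilde g)\circ\iota_{X'}=\bigl(\id_{X'}-\varphi'\varphi,\ -i_1\varphi\bigr)\colon X'\to X'\oplus P_1. \]
Here the first coordinate lies in the image of the norm on $\op{Hom}_\ZZ(X',X')$ \emph{by the second defining relation} $\id_{X'}-\varphi'\varphi\in\im N_G$ — this is exactly where the full strength of $X\sim X'$ (as opposed to merely ``$X$ is a stable retract of $X'$'') is used — and the second coordinate lies in the image of the norm on $\op{Hom}_\ZZ(X',P_1)$ because $\widehat H^0(G,\op{Hom}_\ZZ(X',P_1))=0$, $P_1$ being induced. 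Since the norm acts coordinatewise on the direct sum, $\iota_K\pi_K\iota_{X'}$ is norm-trivial in $\op{Hom}_\ZZ(X',X'\oplus P_1)$; postcomposing with $\pi_K$ and using that the norm commutes with $G$-maps together with $\pi_K\iota_K=\id_K$, I conclude that $\pi_K\iota_{X'}$ is norm-trivial in $\op{Hom}_\ZZ(X',K)$, hence factors through the projective $\ZZ[G]\otimes_\ZZ K$. As $\pi_K\iota_{P_1}\colon P_1\to K$ trivially factors through the projective $P_1$, the full projection $\pi_K$ factors through a projective module, so $\id_K=\pi_K\iota_K$ does too; equivalently $[\id_K]=0$ in $\widehat H^0(G,\op{Hom}_\ZZ(K,K))$, and since $K$ is $\ZZ$-free, \Cref{cor:cohom-zero-to-zg-proj} shows $K$ is $\ZZ[G]$-projective. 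The main obstacle is thus the projectivity of $K$, and the crucial point making it work is recognizing the $X'$-component of the idempotent $\id-\tilde f\tilde g$ as $\id_{X'}-\varphi'\varphi$, which the second relation makes norm-trivial.
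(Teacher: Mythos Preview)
Your proof is correct and takes a genuinely different route from the paper's.

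For the forward direction, the paper sets $Y:=\op{Hom}_\ZZ(I_G,X)$, observes that both $(X,Y)$ and $(X',Y)$ are $1$-encoding pairs (the latter via $X\sim X'$), invokes \Cref{thm:reason-for-encoding} to produce short exact sequences $0\to X\to P'\to Y\to0$ and $0\to X'\to P\to Y\to0$ with $P,P'$ projective, and then applies a Schanuel-type result of Kaplansky to conclude $X\oplus P\cong X'\oplus P'$. The paper does not spell out the easy direction.

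Your approach is more hands-on and self-contained: you extract the concrete norm-triviality relations from \Cref{prop:cohomologicaldef}, build the explicit splitting $X'\oplus P_1\cong X\oplus K$, and then prove $K$ is projective by analyzing the idempotent $\id-\tilde f\tilde g$ column by column. The crucial use of the \emph{second} relation $\id_{X'}-\varphi'\varphi\in\im N_G$ is exactly what is needed and is well identified. The payoff is that you avoid the appeal to \Cref{thm:reason-for-encoding} (and its dependence on Wall's theorem) as well as the external Schanuel lemma; the cost is a longer computation. One small simplification: once you have that $\id_K$ factors through a projective $Q$, you may conclude directly that $K$ is a retract of $Q$ and hence projective, without the detour through $[\id_K]=0$ and \Cref{cor:cohom-zero-to-zg-proj}.
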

\begin{proof}
    Consider the exact sequence
    \[0\to X\to\op{Hom}_\ZZ(\ZZ[G],X)\to\op{Hom}_\ZZ(I_G,X)\to0.\]
    Setting $Y\coloneqq\op{Hom}_\ZZ(I_G,X)$, we see that $(X,Y)$ is a $1$-encoding pair by \Cref{prop:encodingses}, as is $(X',Y)$. But then \Cref{thm:reason-for-encoding} implies there exist $\ZZ[G]$-projective modules $P$ and $P'$ with exact sequences
    \[\arraycolsep=1.4pt\begin{array}{ccccccccc}
        0 &\to& X &\to& P' &\to& Y &\to& 0, \\
        0 &\to& X' &\to& P &\to& Y &\to& 0.
    \end{array}\]
    It follows $X\oplus P\cong X'\oplus P'$ from \cite[Theorem~1]{kaplansky-group-cohomology}.
\end{proof}

\section{More on Encoding}

In this section, we extend the theory of encoding modules in various informative ways. For example, we will complete the proof of \Cref{thm:main}.

\subsection{Two Perfect Pairings}
A pretty way to state the theory we've built is via perfect pairings. For example, one can restate \Cref{prop:allnaturaltransarecups} as follows.
\begin{proposition}
    Let $G$ be a finite group, and let $X$ and $X'$ be $G$-modules. Then, given $r,s\in\ZZ$, the cup-product pairing induces an isomorphism of abelian groups
    \[\widehat H^r(G,\op{Hom}_\ZZ(X',X))\to\op{Mor}_2\left(\widehat H^s(G,\op{Hom}_\ZZ(X,-)),\widehat H^{r+s}(G,\op{Hom}_\ZZ(X',-))\right).\]
    Explicitly, the isomorphism is by $x\mapsto(-\cup x)$.
\end{proposition}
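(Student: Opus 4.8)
The plan is to recognize this proposition as a repackaging of \Cref{prop:allnaturaltransarecups}, with one extra observation: the assignment $x\mapsto(-\cup x)$ is not merely a bijection of sets but an isomorphism of abelian groups. So I would proceed in three steps: (i) check the map is well-defined, i.e.\ that it lands in $\op{Mor}_2$; (ii) check it is a homomorphism of abelian groups; (iii) invoke \Cref{prop:allnaturaltransarecups} for bijectivity.

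For step (i), recall that $\op{Mor}_2\big(\widehat H^s(G,\op{Hom}_\ZZ(X,-)),\widehat H^{r+s}(G,\op{Hom}_\ZZ(X',-))\big)$ denotes the collection of natural transformations between these two cohomological functors; since both functors are abelian-group valued, $\op{Mor}_2$ is itself an abelian group under pointwise addition of natural transformations. Applying \Cref{lem:cuppingisnatural} to the shifting pair of \Cref{ex:shiftingpair} shows that, for each $x\in\widehat H^r(G,\op{Hom}_\ZZ(X',X))$, the family $(-\cup x)$ is a natural transformation $\widehat H^s(G,\op{Hom}_\ZZ(X,-))\Rightarrow\widehat H^{r+s}(G,\op{Hom}_\ZZ(X',-))$. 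Hence $x\mapsto(-\cup x)$ does land in $\op{Mor}_2$.

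For step (ii), I would verify additivity componentwise. Fixing a $G$-module $A$ and $a\in\widehat H^s(G,\op{Hom}_\ZZ(X,A))$, the cup-product map of \eqref{eq:cup-prod-induced-by-eta} is the composite of the $\ZZ$-bilinear Tate cup-product pairing $\widehat H^r(G,\op{Hom}_\ZZ(X',X))\otimes_\ZZ\widehat H^s(G,\op{Hom}_\ZZ(X,A))\to\widehat H^{r+s}(G,\op{Hom}_\ZZ(X',X)\otimes_\ZZ\op{Hom}_\ZZ(X,A))$ with the additive map induced by $\eta$; bilinearity in the first slot therefore gives $a\cup(x_1+x_2)=a\cup x_1+a\cup x_2$, and likewise $a\cup 0=0$. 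Letting $A$ and $a$ vary, this says $(-\cup(x_1+x_2))=(-\cup x_1)+(-\cup x_2)$ as natural transformations, so $x\mapsto(-\cup x)$ is a homomorphism of abelian groups.

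For step (iii), surjectivity and injectivity are exactly the content of \Cref{prop:allnaturaltransarecups}: every natural transformation $\widehat H^s(G,\op{Hom}_\ZZ(X,-))\Rightarrow\widehat H^{r+s}(G,\op{Hom}_\ZZ(X',-))$ equals $(-\cup x)$ for a \emph{unique} $x\in\widehat H^r(G,\op{Hom}_\ZZ(X',X))$. Combining the three steps, $x\mapsto(-\cup x)$ is a bijective group homomorphism, hence an isomorphism of abelian groups. I do not expect any genuine obstacle here, since all the substantive work is already carried out in \Cref{prop:allnaturaltransarecups}; the only point requiring (minor) care is bookkeeping the bilinearity of the Tate cup product so that additivity in the $x$-variable is literally true and not just true up to identification.
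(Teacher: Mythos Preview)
Your proposal is correct and matches the paper's approach exactly: the paper presents this proposition as a direct restatement of \Cref{prop:allnaturaltransarecups} and gives no further proof, so the substantive content is precisely the bijectivity established there. Your additional care in verifying that $x\mapsto(-\cup x)$ lands in $\op{Mor}_2$ (via \Cref{lem:cuppingisnatural}) and is additive (via bilinearity of the cup product) simply makes explicit what the paper leaves implicit.
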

In general, one cannot hope to upgrade this perfect pairing from between functors to between $G$-modules because we can plug in $0$ to the functors. However, if we rearrange the items of our pairing, this becomes possible. 
\begin{lemma}
    Let $G$ be a finite group, and let $X$ be a $G$-module, and let $R\coloneqq\widehat H^0(G,\op{Hom}_\ZZ(X,X))$ be a ring with multiplication given by the cup product. For any $i\in\ZZ$, the functor $F\coloneqq\widehat H^i(G,\op{Hom}_\ZZ(X,-))$ is actually a functor from $G$-modules to right $R$-modules, where the $R$-action is given by the cup product.
\end{lemma}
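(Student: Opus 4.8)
The plan is to set up the relevant structure at the level of coefficient modules and then transport it to cohomology via the cup product. At the coefficient level, composition of homomorphisms gives $G$-equivariant maps
\[\mu_A\colon\op{Hom}_\ZZ(X,A)\otimes_\ZZ\op{Hom}_\ZZ(X,X)\to\op{Hom}_\ZZ(X,A),\qquad f\otimes\varphi\mapsto f\circ\varphi,\]
for every $G$-module $A$. Taking $A=X$, the map $\mu_X$ makes $S\coloneqq\op{Hom}_\ZZ(X,X)$ into an associative unital ring object in $\op{Mod}_G$, with unit the $G$-map $\ZZ\to S$ sending $1\mapsto\id_X$; each $\mu_A$ then makes $\op{Hom}_\ZZ(X,A)$ a right $S$-module object; and for any $G$-map $\theta\colon A\to B$ the induced map $\op{Hom}_\ZZ(X,\theta)$ is right-$S$-linear, since $\theta\circ(f\circ\varphi)=(\theta\circ f)\circ\varphi$. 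All of this is immediate from associativity and unitality of composition. Note that $\mu_A$ is precisely the map $\eta_A$ of \Cref{ex:shiftingpair} in the case $X'=X$.

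Next I would apply Tate cohomology and cup products, using the cup product attached to $\eta$ as in \Cref{ex:shiftingpair}. Cupping a class of $\widehat H^i(G,\op{Hom}_\ZZ(X,A))$ against a class of $R=\widehat H^0(G,S)$ and then applying $\mu_A$ yields a map $F(A)\otimes_\ZZ R\to F(A)$, and the analogous construction with $\mu_X$ gives $R$ a ring structure with unit $[\id_X]$. The ring axioms for $R$ and the right-module axioms for $F(A)$---bilinearity, associativity $(a\cup[\varphi])\cup[\psi]=a\cup([\varphi]\cup[\psi])$, and unitality $a\cup[\id_X]=a$---then follow from the corresponding coefficient-level identities for $\mu_X$ and $\mu_A$ together with the standard bilinearity, associativity, naturality, and unitality of the cup product \cite[Chapter~V]{brown-cohomology}; for the unit axiom one also uses naturality of the cup product along the $G$-map $\ZZ\to S$, $1\mapsto\id_X$, and the fact that $[1]\in\widehat H^0(G,\ZZ)$ is a cup-product identity. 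No sign corrections intervene, since one only ever cups against classes of degree $0$ here.

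It remains to check functoriality: for a $G$-map $\theta\colon A\to B$, the induced map $F(\theta)\colon F(A)\to F(B)$ should be $R$-linear. This is exactly the assertion that $(-\cup[\varphi])$ is a natural transformation from $\widehat H^i(G,\op{Hom}_\ZZ(X,-))$ to itself for each fixed $[\varphi]\in R$, which is \Cref{lem:cuppingisnatural} applied to the shifting pair of \Cref{ex:shiftingpair}; equivalently, it follows from naturality of the cup product together with the coefficient-level $S$-linearity of $\op{Hom}_\ZZ(X,\theta)$. Thus $F$ lifts to a functor from $\op{Mod}_G$ to right $R$-modules.

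I expect the only genuine obstacle to be the bookkeeping that the cup product is compatible with the coefficient pairings $\mu_X$ and $\mu_A$---essentially the associativity of the cup product in its ``module over a ring object'' form. I would handle this by invoking the standard formal properties of the cup product rather than recomputing on the complete resolution; after that, every verification reduces to the elementary algebra of composition recorded in the first step.
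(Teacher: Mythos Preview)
Your proposal is correct and complete; in fact it proves more than the paper bothers to write down, since you explicitly verify the ring axioms for $R$ and the module axioms for $F(A)$, whereas the paper takes these as understood and only addresses functoriality.

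The one point of genuine difference is in how functoriality is checked. You argue via \Cref{lem:cuppingisnatural}: for fixed $[\varphi]\in R$, the map $(-\cup[\varphi])$ is a natural transformation $F\Rightarrow F$, hence commutes with $F(\theta)$ for any $\theta\colon A\to B$. The paper instead observes that $F(\theta)$ is itself a cup product, namely $[\theta]\cup-$ with $[\theta]\in\widehat H^0(G,\op{Hom}_\ZZ(A,B))$, so $R$-linearity of $F(\theta)$ becomes the single associativity identity $[\theta]\cup(a\cup[\varphi])=([\theta]\cup a)\cup[\varphi]$. Your route is more systematic and makes the coefficient-level bookkeeping explicit; the paper's is a one-line shortcut that exploits the fact that the functorial maps in Tate cohomology are already cup products on the left, so left and right actions commute by associativity alone. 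Both arguments are ultimately the same associativity statement viewed from different ends.
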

\begin{proof}
    Given a $G$-module homomorphism $\varphi\colon A\to B$, we need to show that the induced morphism $F\varphi$ is a morphism of right $R$-modules. Well, we can compute that $F\varphi$ is
    \[([\varphi]\cup-)\colon\widehat H^i(G,\op{Hom}_\ZZ(X,A))\to\widehat H^i(G,\op{Hom}_\ZZ(X,B)),\]
    which does commute with the $R$-action on the right.
\end{proof}
\begin{lemma} \label{lem:upgrade-natural-trans}
    Let $G$ be a finite group, and let $(X,X')$ be an $r$-encoding pair. Set $R\coloneqq\widehat H^0(G,\op{Hom}_\ZZ(X,X))$ and $F_i\coloneqq\widehat H^i(G,\op{Hom}_\ZZ(X,-))$, where $i\in\ZZ$ is arbitrary, and define $R'$ and $F'_i$ similarly. Given an encoding element $x$, the natural isomorphisms $(-\cup x)\colon F_i\simeq F'_{i+r}$ induce an isomorphism
    \[\op{Hom}_R\left(F_iA,F_jB\right)\cong\op{Hom}_{R'}\left(F'_{i+r}A,F'_{j+r}B\right)\]
    for any $G$-modules $A,B$ and $i,j\in\ZZ$.
\end{lemma}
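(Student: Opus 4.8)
The plan is to realize the isomorphism via restriction of scalars along an explicit ring isomorphism $c\colon R\xrightarrow{\sim}R'$. Let $x'\in\widehat H^{-r}(G,\op{Hom}_\ZZ(X,X'))$ be the decoding element attached to $x$, so that $x\cup x'=[{\id_X}]$ and $x'\cup x=[{\id_{X'}}]$ by \Cref{prop:better-encoding-pair}. Using the composition cup products $\widehat H^p(G,\op{Hom}_\ZZ(Y,Z))\otimes_\ZZ\widehat H^q(G,\op{Hom}_\ZZ(W,Y))\to\widehat H^{p+q}(G,\op{Hom}_\ZZ(W,Z))$, define $c\colon R\to R'$ by $c(\rho)\coloneqq x'\cup\rho\cup x$; unwinding which ``$\op{Hom}$'' sits in the middle of each pairing shows $c(\rho)$ indeed lands in $R'=\widehat H^0(G,\op{Hom}_\ZZ(X',X'))$. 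The first step is to check that $c$ is a ring isomorphism: it is additive, it sends $[{\id_X}]$ to $x'\cup x=[{\id_{X'}}]$, and, collapsing the middle factor $x\cup x'=[{\id_X}]$ by associativity, $c(\rho_1)\cup c(\rho_2)=(x'\cup\rho_1\cup x)\cup(x'\cup\rho_2\cup x)=x'\cup\rho_1\cup\rho_2\cup x=c(\rho_1\cup\rho_2)$, recalling that multiplication in $R$ and $R'$ is the cup product. Its two-sided inverse is $\sigma\mapsto x\cup\sigma\cup x'$, again by collapsing $x\cup x'$ and $x'\cup x$. No sign issues arise because both $x\cup x'$ and $x'\cup x$ are honest identities (cf.\ \Cref{ex:ig-no-sign-error}).

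The second step is to observe that $(-\cup x)$ is \emph{semilinear} over $c$. For a $G$-module $A$ and $a\in F_iA=\widehat H^i(G,\op{Hom}_\ZZ(X,A))$, the right $R$-action is $a\cdot\rho=a\cup\rho$ and the right $R'$-action on $F'_{i+r}A$ is $b\cdot\sigma=b\cup\sigma$ (these are cup products with the composition pairings, by the preceding lemma). Then, collapsing $x\cup x'=[{\id_X}]$,
\[(a\cdot\rho)\cup x=(a\cup\rho)\cup x=a\cup\rho\cup x=(a\cup x)\cup(x'\cup\rho\cup x)=(a\cup x)\cdot c(\rho).\]
Hence $(-\cup x)\colon F_iA\to F'_{i+r}A$ is a bijection of abelian groups that is $c$-semilinear, i.e.\ an isomorphism $F_iA\cong c^\ast F'_{i+r}A$ of right $R$-modules, where $c^\ast$ denotes restriction of scalars along $c$; the same holds with $(A,i)$ replaced by $(B,j)$. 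Moreover $(-\cup x)^{-1}=(-\cup x')$, since $(a\cup x)\cup x'=a\cup(x\cup x')=a$ and $(b\cup x')\cup x=b\cup(x'\cup x)=b$.

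Finally, since $c$ is a ring isomorphism, $c^\ast$ is an equivalence between right $R'$-modules and right $R$-modules, hence fully faithful, so
\[\op{Hom}_{R'}\left(F'_{i+r}A,F'_{j+r}B\right)=\op{Hom}_R\left(c^\ast F'_{i+r}A,c^\ast F'_{j+r}B\right)\cong\op{Hom}_R\left(F_iA,F_jB\right),\]
where the last isomorphism is conjugation by $(-\cup x)$, sending $\psi$ to $(-\cup x')\circ\psi\circ(-\cup x)$. I expect the only real work to be the bookkeeping in the first step: confirming that the composition cup products compose in the right order (tracking which $\op{Hom}$ is the ``middle'' object) and verifying that $c$ is multiplicative with the stated inverse. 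Once $c$ is established, the semilinearity computation and the restriction-of-scalars conclusion are formal.
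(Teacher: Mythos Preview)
Your proof is correct and follows essentially the same approach as the paper: both construct the isomorphism by conjugation with the cup-product isomorphisms $(-\cup x)$ and $(-\cup x')$. Your framing via the explicit ring isomorphism $c(\rho)=x'\cup\rho\cup x$ and $c$-semilinearity simply unpacks the step the paper records as ``one can check that $R$-module morphisms go to $R'$-module morphisms.''
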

\begin{proof}
    Let $x'$ be a decoding element. The main point is to draw the diagram
    \[\begin{tikzcd}
    	{\widehat H^i(G,\op{Hom}_\ZZ(X,A))} & {\widehat H^j(G,\op{Hom}_\ZZ(X,B))} \\
    	{\widehat H^{i+r}(G,\op{Hom}_\ZZ(X',A))} & {\widehat H^{j+r}(G,\op{Hom}_\ZZ(X',B))}
    	\arrow[from=1-1, to=1-2]
    	\arrow[from=2-1, to=2-2]
    	\arrow["{(-\cup x)}"', shift right=2, from=1-1, to=2-1]
    	\arrow["{(-\cup x')}"', shift right=2, from=2-1, to=1-1]
    	\arrow["{(-\cup x)}"', shift right=2, from=1-2, to=2-2]
    	\arrow["{(-\cup x')}"', shift right=2, from=2-2, to=1-2]
    \end{tikzcd}\]
    which explains how to turn morphisms $F_iA\to F'_jB$ to morphisms $F_{i+r}A\to F'_{j+r}B$ (and vice versa). One can check that $R$-module morphisms go to $R'$-module morphisms (and vice versa), which finishes.
\end{proof}
\begin{theorem} \label{thm:abstractperfectpairing}
	Let $G$ be a finite group, and let $(X,X')$ be an $r$-encoding pair. Set $R'\coloneqq\widehat H^0(G,\op{Hom}_\ZZ(X',X'))$. For any $G$-module $A$ and $i\in\ZZ$, the cup-product pairing induces an isomorphism
	\[\widehat H^i(G,\op{Hom}_\ZZ(X,A))\to\op{Hom}_{R'}\left(\widehat H^{r}(G,\op{Hom}_\ZZ(X',X)),\widehat H^{i+r}(G,\op{Hom}_\ZZ(X',A))\right)\]
	of abelian groups.
\end{theorem}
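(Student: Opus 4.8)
The plan is to write the claimed map down explicitly, reduce its bijectivity to the fact that $\widehat H^r(G,\op{Hom}_\ZZ(X',X))$ is free of rank one over $R'$, and then prove that freeness using an encoding/decoding pair. Throughout, recall from the lemmas preceding this statement that $F_j'\coloneqq\widehat H^j(G,\op{Hom}_\ZZ(X',-))$ is a functor to right $R'$-modules, with $R'$ acting by cupping (i.e.\ composing) on the right; in particular $\widehat H^r(G,\op{Hom}_\ZZ(X',X))=F_r'(X)$ and $\widehat H^{i+r}(G,\op{Hom}_\ZZ(X',A))=F_{i+r}'(A)$ are right $R'$-modules.

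First I would set up the map. Given $c\in\widehat H^i(G,\op{Hom}_\ZZ(X,A))$, the composition pairing $\op{Hom}_\ZZ(X',X)\otimes_\ZZ\op{Hom}_\ZZ(X,A)\to\op{Hom}_\ZZ(X',A)$ induces $\Psi(c)\colon x\mapsto x\cup c$, a map $F_r'(X)\to F_{i+r}'(A)$. Bilinearity of the cup product makes $c\mapsto\Psi(c)$ additive, and $\Psi(c)$ is a morphism of right $R'$-modules because the $R'$-action on source and target is itself given by composition cup products, so that $\Psi(c)(x\cup g)=(x\cup g)\cup c$ and $\Psi(c)(x)\cup g=(x\cup c)\cup g$ agree by associativity of the cup product — the degree-$0$ element $g\in R'$ can be moved past $c$ at no Koszul sign. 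Thus $\Psi$ is a homomorphism $\widehat H^i(G,\op{Hom}_\ZZ(X,A))\to\op{Hom}_{R'}(F_r'(X),F_{i+r}'(A))$.

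Next I would show $F_r'(X)$ is a free right $R'$-module of rank one on an encoding element $x\in\widehat H^r(G,\op{Hom}_\ZZ(X',X))$, with decoding element $x'\in\widehat H^{-r}(G,\op{Hom}_\ZZ(X,X'))$ (these exist since $(X,X')$ is an $r$-encoding pair, by \Cref{prop:better-encoding-pair}). The map $g\mapsto x\cup g$ is onto, since any $\xi\in F_r'(X)$ has $x'\cup\xi\in R'$ and $x\cup(x'\cup\xi)=(x\cup x')\cup\xi=[{\id_X}]\cup\xi=\xi$ by associativity and $x\cup x'=[{\id_X}]$ (cupping with a class represented by an identity map being the identity). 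It is injective since $x'\cup(x\cup g)=(x'\cup x)\cup g=[{\id_{X'}}]\cup g=g$. Hence $g\mapsto x\cup g$ is an isomorphism $R'\xrightarrow{\ \sim\ }F_r'(X)$ of right $R'$-modules, so evaluation at $x$ is an isomorphism $\op{ev}_x\colon\op{Hom}_{R'}(F_r'(X),N)\xrightarrow{\ \sim\ }N$ of abelian groups for every right $R'$-module $N$; I take $N=F_{i+r}'(A)$.

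Finally, $\op{ev}_x\circ\Psi$ carries $c$ to $x\cup c$, and this map $\widehat H^i(G,\op{Hom}_\ZZ(X,A))\to\widehat H^{i+r}(G,\op{Hom}_\ZZ(X',A))$ is an isomorphism: its two-sided inverse is $c'\mapsto x'\cup c'$, again by associativity together with $x\cup x'=[{\id_X}]$ and $x'\cup x=[{\id_{X'}}]$. (Up to a sign from graded-commutativity this is the component at $A$ of the natural isomorphism $(-\cup x)$ of \Cref{prop:better-encoding-pair}, which gives a second reason it is an isomorphism.) Since $\op{ev}_x$ and $\op{ev}_x\circ\Psi$ are both isomorphisms, so is $\Psi$. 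The one place I expect friction is the sign bookkeeping: because the composition pairings are not symmetric, one must check that the Koszul signs cancel both in the $R'$-linearity of $\Psi(c)$ and in the inverse computation of the last step — everything else is formal cup-product manipulation via associativity, the identity-class property, and the encoding/decoding relations. One should also note the map $\Psi$ itself is canonical, independent of the auxiliary encoding element chosen in the proof.
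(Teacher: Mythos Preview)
Your proposal is correct and follows essentially the same approach as the paper. The paper's proof invokes \Cref{lem:upgrade-natural-trans} to transport the $R'$-$\op{Hom}$ to an $R$-$\op{Hom}$ (with $R=\widehat H^0(G,\op{Hom}_\ZZ(X,X))$) whose source becomes $R$ itself, reducing to the tautology $\op{Hom}_R(R,M)\cong M$; you instead verify directly that $\widehat H^r(G,\op{Hom}_\ZZ(X',X))$ is free of rank one over $R'$ on an encoding element and then evaluate, which is precisely the content of that lemma specialized to the relevant indices. One small remark: the identity $(x\cup g)\cup c=(x\cup c)\cup g$ is not literally ``associativity'' --- the two triple products route through different intermediate modules --- but it does hold because cupping with the degree-$0$ class $g$ (represented by a $G$-equivariant endomorphism of $X'$) is the functorial map induced by precomposition, and cup products are natural in each variable; your caveat about sign bookkeeping already flags this, and the conclusion is unaffected.
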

\begin{proof}
    Set $R\coloneqq\widehat H^0(G,\op{Hom}_\ZZ(X,X))$. Tracking through \Cref{lem:upgrade-natural-trans}, it suffices to show the cup-product pairing induces an isomorphism
    \[\widehat H^i(G,\op{Hom}_\ZZ(X,A)) \to \op{Hom}_R\left(\widehat H^0(G,\op{Hom}_\ZZ(X,X)),\widehat H^i(G,\op{Hom}_\ZZ(X,A))\right).\]
	This is a consequence of how the $R$-action is defined.
\end{proof}
\begin{remark}
    In the case of $r$-encoding modules $X$ so that $(\ZZ,X)$ is a $(-r)$-encoding pair, the fact that $\widehat H^0(G,\op{Hom}_\ZZ(X,X))$ is isomorphic to the ring $\ZZ/|G|\ZZ$ (discussed in \Cref{rem:encoding-module-ring}) makes \Cref{thm:abstractperfectpairing} read
    \[\widehat H^i(G,A)\cong\op{Hom}_\ZZ\left(\widehat H^{-r}(G,\op{Hom}_\ZZ(X,\ZZ)),\widehat H^{i-r}(G,\op{Hom}_\ZZ(X,A))\right).\]
\end{remark}

\subsection{Encoding by Tensoring}
It turns out that we can also encode ``on the other side,'' in the following sense.
\begin{proposition} \label{prop:encode-to-tensor}
	Let $G$ be a finite group, and let $(X,X')$ be an $r$-encoding pair with encoding element $x\in\widehat H^r(G,\op{Hom}_\ZZ(X',X))$. Then the cup products
	\[(x\cup-)\colon\widehat H^i(G,X'\otimes_\ZZ-)\Rightarrow\widehat H^{i+r}(G,X\otimes_\ZZ-)\]
	assemble into a natural isomorphism for any $i\in\ZZ$. As usual, the cup product is induced by evaluation.
\end{proposition}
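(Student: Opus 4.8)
The plan is to verify naturality of $(x\cup-)$ and then invertibility of each component, the latter by producing an explicit inverse from the decoding element rather than by reducing to a single degree.

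\emph{Naturality.} Fix a $G$-module $A$. The component at $A$ is the composite of the left cup-product map $\widehat H^i(G,X'\otimes_\ZZ A)\to\widehat H^{i+r}\big(G,\op{Hom}_\ZZ(X',X)\otimes_\ZZ X'\otimes_\ZZ A\big)$, $y\mapsto x\cup y$, with the map on coefficients induced by the evaluation $\op{Hom}_\ZZ(X',X)\otimes_\ZZ X'\to X$ on the first two tensor factors. The cup product is natural in its coefficient variables and evaluation is a natural transformation, so this composite is natural in $A$; this is the left-handed analogue of \Cref{lem:cuppingisnatural}, proved in exactly the same way (equivalently, one runs the argument of \Cref{lem:cuppingisnatural} on the evident ``left shifting pair'' $(X'\otimes_\ZZ-,\,X\otimes_\ZZ-,\,\op{Hom}_\ZZ(X',X),\,\op{ev})$). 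The same argument applies to $(x'\cup-)$ below, so a two-sided inverse built from $x'$ will automatically be natural.

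\emph{Invertibility.} Let $x'\in\widehat H^{-r}(G,\op{Hom}_\ZZ(X,X'))$ be the decoding element, so that under the composition pairings of \eqref{eq:composition-map-for-cup} we have $x'\cup x=[{\id_{X'}}]$ and $x\cup x'=[{\id_X}]$. I claim $(x'\cup-)$ is a two-sided inverse of $(x\cup-)$. Applying $(x'\cup-)\circ(x\cup-)$ to a class $y\in\widehat H^i(G,X'\otimes_\ZZ A)$, and using naturality of the cup product to pull the two evaluation maps to the end together with strict associativity $x'\cup(x\cup y)=(x'\cup x)\cup y$, one sees the result is obtained from $(x'\cup x)\cup y\in\widehat H^i\big(G,\op{Hom}_\ZZ(X,X')\otimes_\ZZ\op{Hom}_\ZZ(X',X)\otimes_\ZZ X'\otimes_\ZZ A\big)$ by the coefficient map ``evaluate $\op{Hom}_\ZZ(X',X)\otimes X'\to X$, then evaluate $\op{Hom}_\ZZ(X,X')\otimes X\to X'$.'' The crucial point is that this coefficient map is identical to ``first compose $\op{Hom}_\ZZ(X,X')\otimes\op{Hom}_\ZZ(X',X)\to\op{Hom}_\ZZ(X',X')$, then evaluate $\op{Hom}_\ZZ(X',X')\otimes X'\to X'$,'' because $(f\circ g)(z)=f(g(z))$. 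This licenses replacing $x'\cup x$ by $[{\id_{X'}}]$, after which the result equals $\op{ev}_*\big([{\id_{X'}}]\cup y\big)$ for $\op{ev}\colon\op{Hom}_\ZZ(X',X')\otimes_\ZZ X'\to X'$; since $[{\id_{X'}}]$ is the image of $[1]\in\widehat H^0(G,\ZZ)$ under $\ZZ\to\op{Hom}_\ZZ(X',X')$ and $[1]$ acts as the identity for the cup product, this is just $y$. Hence $(x'\cup-)\circ(x\cup-)=\id$, and the symmetric computation with $x\cup x'=[{\id_X}]$ gives $(x\cup-)\circ(x'\cup-)=\id$.

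I expect the only non-formal step to be the verification that the two coefficient maps in the previous paragraph coincide, i.e.\ carefully tracking which tensor factors the evaluation and composition maps are applied to; once that is pinned down the rest is bookkeeping, and no sign intervenes since the cup product is strictly associative (one should still confirm that the natural identifications used do not silently insert a symmetry isomorphism with a sign). A reduction to one degree via \Cref{prop:dimshiftcupisos}, valid for the functors $X'\otimes_\ZZ-$ and $X\otimes_\ZZ-$ by the remark after \Cref{cor:cupdown}, is also available, but it still requires checking the isomorphism at that one degree, so the explicit-inverse route above is the shorter path.
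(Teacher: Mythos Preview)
Your proof is correct and follows essentially the same approach as the paper: pick up the decoding element $x'$ and show that $(x'\cup-)$ is a two-sided inverse of $(x\cup-)$. The paper's version is terser, simply asserting ``as in the proof of \Cref{prop:better-encoding-pair}'' that the two cup-product maps are mutually inverse, whereas you have carefully unpacked the compatibility between the composition pairing \eqref{eq:composition-map-for-cup} and the evaluation map that makes the computation go through.
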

\begin{proof}
    Pick up a decoding element $x'\in\widehat H^{i+r}(G,\op{Hom}_\ZZ(X,X'))$ so that
	\[x\cup x'=[\id_{X}]\qquad\text{and}\qquad x'\cup x=[{\id_{X'}}].\]
	Thus, as in the proof of \Cref{prop:better-encoding-pair}, we can compute that
	\[\arraycolsep=1.4pt\begin{array}{rccc}
		(x\cup-)\colon& \widehat H^i(G,X'\otimes_\ZZ-) &\to& \widehat H^{i+r}(G,X\otimes_\ZZ-) \\
		(x'\cup-)\colon& \widehat H^{i+r}(G,X\otimes_\ZZ-) &\to& \widehat H^i(G,X'\otimes_\ZZ-)
	\end{array}\]
	are inverse natural transformations, which finishes the proof.
\end{proof}
Amusingly, we can plug in $\ZZ$ into \Cref{prop:encode-to-tensor} to compute some cohomology groups.
\begin{corollary} \label{cor:compute-cohom-pairs}
	Let $G$ be a finite group, and let $(X,X')$ be an $r$-encoding pair with encoding element $x\in\widehat H^r(G,\op{Hom}_\ZZ(X',X))$. Then the cup products
	\[(x\cup-)\colon\widehat H^i(G,X')\to\widehat H^{i+r}(G,X)\]
	are isomorphisms for all $i\in\ZZ$.
\end{corollary}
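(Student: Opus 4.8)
The plan is to obtain this as the $A=\ZZ$ case of \Cref{prop:encode-to-tensor}. Recall that for any $G$-module $M$ there is a canonical isomorphism of $G$-modules $M\otimes_\ZZ\ZZ\xrightarrow{\ \sim\ }M$, $m\otimes n\mapsto nm$, and that this isomorphism is natural in $M$. Specializing the natural isomorphism
\[(x\cup-)\colon\widehat H^i(G,X'\otimes_\ZZ-)\Rightarrow\widehat H^{i+r}(G,X\otimes_\ZZ-)\]
provided by \Cref{prop:encode-to-tensor} to the $G$-module $A=\ZZ$, and transporting along the canonical isomorphisms $X'\otimes_\ZZ\ZZ\cong X'$ and $X\otimes_\ZZ\ZZ\cong X$, yields a homomorphism $\widehat H^i(G,X')\to\widehat H^{i+r}(G,X)$. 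Since a component of a natural isomorphism is an isomorphism, this map is an isomorphism for every $i\in\ZZ$.

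It then remains to check that the map so obtained is the cup-product map $(x\cup-)$ named in the statement, that is, the one induced by the evaluation pairing $\op{Hom}_\ZZ(X',X)\otimes_\ZZ X'\to X$. Both maps are cup products against the same class $x\in\widehat H^r(G,\op{Hom}_\ZZ(X',X))$ and differ only in the coefficient pairing used: \Cref{prop:encode-to-tensor} with $A=\ZZ$ uses the pairing $\op{Hom}_\ZZ(X',X)\otimes_\ZZ(X'\otimes_\ZZ\ZZ)\to X\otimes_\ZZ\ZZ$, which under the canonical isomorphisms $X'\otimes_\ZZ\ZZ\cong X'$ and $X\otimes_\ZZ\ZZ\cong X$ is carried precisely to evaluation. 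By naturality of the cup product with respect to morphisms of coefficient pairings, the two cup-product maps therefore coincide, and the corollary follows.

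I do not expect a genuine obstacle here; the only point requiring a moment's care is the last compatibility statement, which is purely formal — one writes the square expressing naturality of $\cup$ along the morphism of pairings and observes that it commutes by functoriality of $\widehat H^\bullet(G,-)$ and of $-\otimes_\ZZ-$. Degrees and orders agree on both sides, so no sign discrepancy arises.
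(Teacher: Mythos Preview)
Your proposal is correct and is exactly the paper's approach: the paper's entire proof is the one-line remark that one plugs $A=\ZZ$ into \Cref{prop:encode-to-tensor}. Your extra paragraph verifying that the transported map agrees with the cup product induced by evaluation is more detail than the paper gives, but it is correct and harmless.
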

With some care, we can even build a converse for \Cref{prop:encode-to-tensor}.
\begin{proposition} \label{prop:tensor-to-encode}
    Let $G$ be a finite group, and let $X$ and $X'$ be finitely generated $G$-modules equipped with a natural isomorphism
    \[\widehat H^i(G,X'\otimes_\ZZ-)\simeq\widehat H^{i+r}(G,X\otimes_\ZZ-)\]
    for some $i,r\in\ZZ$. Then $(X,X')$ is an $r$-encoding pair.
\end{proposition}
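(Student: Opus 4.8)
The plan is to turn the given tensor-style natural isomorphism into a Hom-style one and then recognize $(X,X')$ as an $r$-encoding pair using \Cref{prop:dualmodule}. The elementary mechanism is that for a finitely generated \emph{$\ZZ$-free} $G$-module $Z$ there is a $G$-equivariant isomorphism $Z\otimes_\ZZ A\cong\op{Hom}_\ZZ(Z^\lor,A)$, natural in $A$, where $Z^\lor\coloneqq\op{Hom}_\ZZ(Z,\ZZ)$: indeed $Z^\lor$ is again finitely generated $\ZZ$-free, so $\op{Hom}_\ZZ(Z^\lor,A)\cong\op{Hom}_\ZZ(Z^\lor,\ZZ)\otimes_\ZZ A=Z^{\lor\lor}\otimes_\ZZ A\cong Z\otimes_\ZZ A$.

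First I would treat the case in which $X$ and $X'$ are themselves $\ZZ$-free. Then the mechanism above rewrites the hypothesized natural isomorphism as
\[\widehat H^i(G,\op{Hom}_\ZZ(X'^\lor,-))\simeq\widehat H^{i+r}(G,\op{Hom}_\ZZ(X^\lor,-)),\]
which is exactly the assertion that $(X'^\lor,X^\lor)$ is an $r$-encoding pair. Applying \Cref{prop:dualmodule} with $M=\ZZ$ to this pair produces that $(X^{\lor\lor},X'^{\lor\lor})$ is an $r$-encoding pair, and since $X$ and $X'$ are finitely generated and $\ZZ$-free, \Cref{rem:dual-twice-fg} identifies $X^{\lor\lor}\cong X$ and $X'^{\lor\lor}\cong X'$. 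Hence $(X,X')$ is an $r$-encoding pair.

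It remains to reduce the general finitely generated case to the $\ZZ$-free one, and I expect this to be the crux. One cannot simply pass to the torsion-free quotients $X/X_t$ and $X'/X'_t$: the torsion submodules carry cohomology in general (for instance $(\ZZ\oplus\ZZ/2\ZZ,\ZZ\oplus\ZZ/2\ZZ)$ is a $0$-encoding pair over $G=\ZZ/2\ZZ$), and $X$ need not be cohomologically equivalent to $X/X_t$. The approach I would take is to resolve $X$ by a short exact sequence $0\to K\to\ZZ[G]^n\to X\to0$ with $K$ finitely generated and $\ZZ$-free, and to use the $\ZZ$-split sequence $0\to X_t\to X\to X/X_t\to0$; since $\ZZ[G]^n$ is induced, tensoring the first sequence with $A$ and homming the second into $A$ produce, after passing to Tate cohomology, long exact sequences relating $\widehat H^\bullet(G,X\otimes_\ZZ A)$ and $\widehat H^\bullet(G,\op{Hom}_\ZZ(X,A))$ to the cohomology of $\ZZ$-free modules, with ``error terms'' $\widehat H^\bullet(G,\op{Tor}^\ZZ_1(X_t,A))$ and $\widehat H^\bullet(G,\op{Hom}_\ZZ(X_t,A))$ respectively. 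The key point is that these error terms agree: for a finite abelian group $T$ there is an isomorphism $\op{Tor}^\ZZ_1(T,A)\cong\op{Hom}_\ZZ(T,A)$ natural in $A$, as one sees from a free presentation of $T$ (equivalently, from its Smith normal form). Feeding the hypothesis through these diagrams — and carefully tracking the degree shifts coming from $\ZZ[G]^n$ and the behavior of the classifying element provided by \Cref{prop:allnaturaltransarecups} — should yield an isomorphism of Hom-cohomology functors for $(X,X')$, i.e.\ that $(X,X')$ is an $r$-encoding pair. The bookkeeping in this diagram chase is the technical heart of the argument; an alternative that sidesteps some of it is to construct the encoding element $x\in\widehat H^r(G,\op{Hom}_\ZZ(X',X))$ together with a decoding partner directly and then verify condition~(c) of \Cref{prop:better-encoding-pair}, using \Cref{prop:encode-to-tensor} as a consistency check.
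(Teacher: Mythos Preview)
Your treatment of the $\ZZ$-free case is correct and gives a pleasant alternative to the paper's argument there: you use $\ZZ$-duality together with \Cref{prop:dualmodule} to turn the tensor isomorphism into a $\op{Hom}$ isomorphism, whereas the paper uses $\QQ/\ZZ$-duality uniformly.

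The reduction of the general case, however, has a genuine gap. Your key claim --- that for a finite abelian group $T$ one has $\op{Tor}^\ZZ_1(T,A)\cong\op{Hom}_\ZZ(T,A)$ naturally in $A$ --- is true as a statement about abelian groups but is \emph{not} $G$-equivariant, and $G$-equivariance is precisely what you need to compare the error terms in Tate cohomology. The functor $\op{Tor}^\ZZ_1(-,A)$ is covariant in the first variable while $\op{Hom}_\ZZ(-,A)$ is contravariant, so no isomorphism natural in $T$ can exist; the correct $G$-equivariant statement is $\op{Tor}^\ZZ_1(T,A)\cong\op{Hom}_\ZZ(T^*,A)$ with $T^*=\op{Hom}_\ZZ(T,\QQ/\ZZ)$ the Pontryagin dual. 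Concretely, if $T=\ZZ/n\ZZ$ with $g\in G$ acting by multiplication by $k$, then on $\op{Tor}^\ZZ_1(T,A)\cong A[n]$ the element $g$ acts (along with its action on $A$) through multiplication by $k$, while on $\op{Hom}_\ZZ(T,A)\cong A[n]$ it acts through $k^{-1}$; these differ once $k^2\not\equiv1\pmod n$. So your two error terms $\widehat H^\bullet(G,\op{Tor}^\ZZ_1(X_t,A))$ and $\widehat H^\bullet(G,\op{Hom}_\ZZ(X_t,A))$ need not agree, and the proposed diagram chase cannot close. The paper sidesteps all of this by using Pontryagin duality from the outset: for finitely generated $A$ one has $A\cong(A^*)^*$, and the duality theorem \cite[Corollary~XII.6.5]{cartan-eilenberg} yields directly
\[\widehat H^{-i-1}(G,\op{Hom}_\ZZ(X,A))\cong\op{Hom}_\ZZ\bigl(\widehat H^i(G,X\otimes_\ZZ A^*),\QQ/\ZZ\bigr),\]
converting tensor-cohomology into $\op{Hom}$-cohomology in one step with no torsion bookkeeping. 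The only residual issue --- that this produces a natural isomorphism only on $\mathrm{FgMod}_G$ --- is handled by observing that \Cref{prop:allnaturaltransarecups} and \Cref{prop:better-encoding-pair} already go through for functors on $\mathrm{FgMod}_G$ (since $X,X'$ are finitely generated), yielding encoding and decoding elements that then witness the $r$-encoding pair on all of $\mathrm{Mod}_G$.
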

\begin{proof}
    The idea is to work with finitely generated modules and use duality. Indeed, for a finitely generated $\ZZ[G]$-module $A$, we set $A^*\coloneqq\op{Hom}_\ZZ(A,\QQ/\ZZ)$ and compute
    \begin{align*}
        \widehat H^{-i-1}(G,\op{Hom}_\ZZ(X,A)) &\cong \widehat H^{-i-1}(G,\op{Hom}_\ZZ(X,\op{Hom}_\ZZ(A^*,\QQ/\ZZ))) \\
        &\cong \widehat H^{-i-1}(G,\op{Hom}_\ZZ(X\otimes_\ZZ A^*,\QQ/\ZZ)) \\
        &\cong \op{Hom}_\ZZ\left(\widehat H^i(G,X\otimes_\ZZ A^*),\widehat H^{-1}(G,\QQ/\ZZ)\right).
    \end{align*}
    where in the last isomorphism we have used the duality theorem \cite[Corollary~XII.6.5]{cartan-eilenberg}. We can now apply the hypothesized natural isomorphism and work backwards to assemble a natural isomorphism
    \[\widehat H^{-i-1}(G,\op{Hom}_\ZZ(X,-))\simeq\widehat H^{-i-1+r}(G,\op{Hom}_\ZZ(X',-)),\]
    where the functors are $\mathrm{FgMod}_G\to\mathrm{Ab}$. (Here, $\mathrm{FgMod}_G$ is the category of finitely generated $G$-modules.) Now, we would be done if we could upgrade this natural isomorphism to $\mathrm{Mod}_G\to\mathrm{Ab}$.
    
    Well, because $X$ and $X'$ are finitely generated, the arguments of \Cref{prop:allnaturaltransarecups} and \Cref{prop:better-encoding-pair} generalize from functors $\mathrm{Mod}_G\to\mathrm{Ab}$ to functors $\mathrm{FgMod}_G\to\mathrm{Ab}$. This gives $x\in\widehat H^r(G,\op{Hom}_\ZZ(X',X))$ and $x'\in\widehat H^{-r}(G,\op{Hom}_\ZZ(X,X'))$ such that $x'\cup x=[{\id}_{X'}]$ and $x\cup x'=[{\id_X}]$, so $(X,X')$ is an $r$-encoding pair.
\end{proof}
\begin{remark}
    It is conceivable that one can drop the finitely generated hypothesis in \Cref{prop:tensor-to-encode}. This seems to be difficult because the analogous version of \Cref{prop:allnaturaltransarecups} is nontrivial to prove. Namely, if we hypothesize that our natural transformation and its inverse arise from cup products, then the converse holds.
\end{remark}

\subsection{Torsion-Free Encoding}
For this subsection, we will focus on encoding modules instead of encoding pairs, for technical reasons. In the theory of periodic cohomology, one can show that it is enough to check the single cohomology group
\[\widehat H^r(G,\ZZ)\cong\ZZ/|G|\ZZ\]
for some $r\in\ZZ$ to get $r$-periodic cohomology. We might hope that something similar is true for our $r$-encoding modules. A main ingredient in the proof of this statement for periodic cohomology is an integral duality theorem, so here is the needed analogue.
\begin{proposition}[{\cite[Exercise~VI.7.3]{brown-cohomology}}] \label{prop:abstractintegralduality}
	Let $G$ be a finite group, and let $X$ be a $\ZZ$-free $G$-module. Then the cup-product pairing induces an isomorphism
	\[\widehat H^i(G,\op{Hom}_\ZZ(X,\ZZ))\to\op{Hom}_\ZZ\left(\widehat H^{-i}(G,X),\widehat H^0(G,\ZZ)\right)\]
	by $x\mapsto(x\cup-)$, for all $i\in\ZZ$. Indeed, this is a duality upon identifying $\widehat H^0(G,\ZZ)$ with $\frac1{|G|}\ZZ/\ZZ\subseteq\QQ/\ZZ$.
\end{proposition}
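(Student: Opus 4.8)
The plan is to deduce the integral statement from the usual Tate duality with $\QQ/\ZZ$-coefficients, namely \cite[Corollary~XII.6.5]{cartan-eilenberg}, which was already invoked in the proof of \Cref{prop:tensor-to-encode}. First I would record the short exact sequence of trivial $G$-modules $0\to\ZZ\to\QQ\to\QQ/\ZZ\to0$. Since $X$ is $\ZZ$-free, applying $\op{Hom}_\ZZ(X,-)$ keeps this sequence exact, giving $0\to\op{Hom}_\ZZ(X,\ZZ)\to\op{Hom}_\ZZ(X,\QQ)\to\op{Hom}_\ZZ(X,\QQ/\ZZ)\to0$. The middle term is a $\QQ$-vector space, hence uniquely divisible, hence cohomologically trivial (its Tate cohomology vanishes in every degree because it is both divisible and torsion-free, so multiplication by $|G|$ is an isomorphism while Tate cohomology is $|G|$-torsion). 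Therefore the connecting map gives a natural isomorphism $\widehat H^i(G,\op{Hom}_\ZZ(X,\QQ/\ZZ))\cong\widehat H^{i+1}(G,\op{Hom}_\ZZ(X,\ZZ))$ for all $i\in\ZZ$. An identical argument with $X$ replaced by $\ZZ$ gives $\widehat H^{i}(G,\QQ/\ZZ)\cong\widehat H^{i+1}(G,\ZZ)$; in particular $\widehat H^{-1}(G,\QQ/\ZZ)\cong\widehat H^0(G,\ZZ)\cong\frac1{|G|}\ZZ/\ZZ$.

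Next I would invoke the $\QQ/\ZZ$-duality: since $\op{Hom}_\ZZ(X,\QQ/\ZZ)\cong X^*$ in the notation of \Cref{prop:tensor-to-encode} (using that $X$ is $\ZZ$-free, so $\op{Hom}_\ZZ(X,\op{Hom}_\ZZ(\QQ/\ZZ,\QQ/\ZZ))$ unwinds appropriately — more directly, $\op{Hom}_\ZZ(X,\QQ/\ZZ)$ is exactly the Pontryagin dual used in Cartan–Eilenberg), the duality theorem yields a perfect pairing $\widehat H^{j}(G,\op{Hom}_\ZZ(X,\QQ/\ZZ))\times\widehat H^{-j-1}(G,X)\to\widehat H^{-1}(G,\QQ/\ZZ)$, i.e. an isomorphism $\widehat H^{j}(G,\op{Hom}_\ZZ(X,\QQ/\ZZ))\cong\op{Hom}_\ZZ(\widehat H^{-j-1}(G,X),\widehat H^{-1}(G,\QQ/\ZZ))$, induced by the cup product. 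Setting $j=i-1$ and composing with the two shifting isomorphisms from the previous paragraph converts the left side into $\widehat H^i(G,\op{Hom}_\ZZ(X,\ZZ))$ and the coefficient group $\widehat H^{-1}(G,\QQ/\ZZ)$ into $\widehat H^0(G,\ZZ)$, producing the claimed isomorphism $\widehat H^i(G,\op{Hom}_\ZZ(X,\ZZ))\cong\op{Hom}_\ZZ(\widehat H^{-i}(G,X),\widehat H^0(G,\ZZ))$.

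The one genuinely non-formal point — and the main obstacle — is checking that, after transporting along the connecting isomorphisms, the resulting map is still literally the cup-product map $x\mapsto(x\cup-)$, rather than that map composed with some sign or automorphism of $\widehat H^0(G,\ZZ)$. This is the standard compatibility of cup products with the connecting homomorphisms of $0\to\ZZ\to\QQ\to\QQ/\ZZ\to0$ (see \cite[p.~138]{brown-cohomology}): the $\delta$ on $\op{Hom}_\ZZ(X,-)$ is itself $\delta$ applied after cupping, so the square relating $(\,\cdot\,\cup\,\cdot\,)$ valued in $\QQ/\ZZ$ to $(\,\cdot\,\cup\,\cdot\,)$ valued in $\ZZ$ commutes up to the sign $(-1)$ coming from moving $\delta$ past the degree-shift, which is absorbed into the identification $\widehat H^{-1}(G,\QQ/\ZZ)\xrightarrow{\sim}\widehat H^0(G,\ZZ)$. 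I would spell this out just carefully enough to pin down that the identification of $\widehat H^0(G,\ZZ)$ with $\frac1{|G|}\ZZ/\ZZ\subseteq\QQ/\ZZ$ is the one making the pairing perfect, and then the ``indeed, this is a duality'' clause follows since a perfect $\ZZ$-bilinear pairing of finite abelian groups into $\QQ/\ZZ$ identifies each side with the Pontryagin dual of the other.
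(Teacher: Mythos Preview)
Your proposal is correct and follows essentially the same route as the paper: both apply $\op{Hom}_\ZZ(X,-)$ to $0\to\ZZ\to\QQ\to\QQ/\ZZ\to0$, use acyclicity of $\op{Hom}_\ZZ(X,\QQ)$ to identify $\widehat H^i(G,\op{Hom}_\ZZ(X,\ZZ))\cong\widehat H^{i-1}(G,\op{Hom}_\ZZ(X,\QQ/\ZZ))$, invoke \cite[Corollary~XII.6.5]{cartan-eilenberg}, and then shift $\widehat H^{-1}(G,\QQ/\ZZ)$ back to $\widehat H^0(G,\ZZ)$. If anything you are more explicit than the paper about the final compatibility check, which the paper dispatches with the single sentence ``Tracking these isomorphisms through reveals that the composite is induced by the cup-product pairing.''
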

\begin{proof}
	This proof is analogous to \cite[Theorem~XII.6.6]{cartan-eilenberg}. The key to the proof is the short exact sequence
	\[0\to\ZZ\to\QQ\to\QQ/\ZZ\to0.\]
	Because $X$ is $\ZZ$-free,
	\[0\to\op{Hom}_\ZZ(X,\ZZ)\to\op{Hom}_\ZZ(X,\QQ)\to\op{Hom}_\ZZ(X,\QQ/\ZZ)\to0\]
	is also short exact. Quickly, note that $\QQ$ and $\op{Hom}_\ZZ(X,\QQ)$ are both divisible abelian groups and therefore acyclic, so the induced boundary morphisms will be isomorphisms. As such, we compute
	\begin{align*}
	    \widehat H^i(G,\op{Hom}_\ZZ(X,\ZZ)) &\cong \widehat H^{i-1}(G,\op{Hom}_\ZZ(X,\QQ/\ZZ)) \\
	    &\cong \op{Hom}_\ZZ\left(\widehat H^{-i}(G,X),\widehat H^{-1}(G,\QQ/\ZZ)\right) \\
	    &\cong \op{Hom}_\ZZ\left(\widehat H^{-i}(G,X),\widehat H^0(G,\ZZ)\right),
	\end{align*}
	where we have used the duality theorem \cite[Corollary~XII.6.5]{cartan-eilenberg} at the second isomorphism. Tracking these isomorphisms through reveals that the composite is induced by the cup-product pairing.
\end{proof}
And here is our result.
\begin{proposition} \label{prop:finitecohomcheck}
	Let $G$ be a finite group, and let $X$ be a $\ZZ$-free $G$-module. The following are equivalent.
	\begin{enumerate}
		\item[(a)] $X$ is an $r$-encoding module.
		\item[(b)] $\widehat H^r(G,X)\cong\ZZ/|G|\ZZ$ and $\widehat H^0(G,\op{Hom}_\ZZ(X,X))\cong\ZZ/|G|\ZZ$.
		\item[(c)] $\widehat H^r(G,X)\cong\ZZ/|G|\ZZ$ and $\widehat H^0(G,\op{Hom}_\ZZ(X,X))$ is cyclic.
	\end{enumerate}
\end{proposition}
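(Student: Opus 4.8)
The plan is to prove the cycle (a) $\Rightarrow$ (b) $\Rightarrow$ (c) $\Rightarrow$ (a); only the last implication requires real work. For (a) $\Rightarrow$ (b), if $X$ is an $r$-encoding module then $(X,\ZZ)$ is an $r$-encoding pair, so fixing an encoding element $x$, the proof of \Cref{prop:uniq-encoding} exhibits an isomorphism $(-\cup x)\colon\widehat H^0(G,\op{Hom}_\ZZ(X,X))\to\widehat H^r(G,\op{Hom}_\ZZ(\ZZ,X))=\widehat H^r(G,X)$, while \Cref{cor:compute-cohom-pairs} identifies $\widehat H^r(G,X)$ with $\widehat H^0(G,\ZZ)=\ZZ/|G|\ZZ$; hence both groups named in (b) are $\ZZ/|G|\ZZ$. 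The implication (b) $\Rightarrow$ (c) is immediate, since $\ZZ/|G|\ZZ$ is cyclic.

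For (c) $\Rightarrow$ (a) the idea is to hand-build an encoding element and a decoding element for $(X,\ZZ)$ and invoke \Cref{prop:better-encoding-pair}. Set $R\coloneqq\widehat H^0(G,\op{Hom}_\ZZ(X,X))$, a ring under cup product with unit $[{\id_X}]$; it is finite cyclic (cyclic by hypothesis, killed by $|G|$), say $R\cong\ZZ/k\ZZ$ with $k\mid|G|$, and writing $[{\id_X}]=mg$ for a generator $g$ and using $[{\id_X}]^2=[{\id_X}]$ together with $g^2=\ell g$ forces $m\ell\equiv1\pmod k$, so $[{\id_X}]$ itself generates $R$. Now pick a generator $x\in\widehat H^r(G,X)\cong\ZZ/|G|\ZZ$. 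I then produce a decoding element via duality: \Cref{prop:abstractintegralduality} at $i=-r$ realizes the cup-product pairing as an isomorphism $\widehat H^{-r}(G,\op{Hom}_\ZZ(X,\ZZ))\cong\op{Hom}_\ZZ\!\left(\widehat H^r(G,X),\widehat H^0(G,\ZZ)\right)$ via $x'\mapsto(x'\cup-)$, and unwinding the canonical identification $\op{Hom}_\ZZ(\ZZ,X)\cong X$ shows that this cup product coincides with the composition pairing $\widehat H^{-r}(G,\op{Hom}_\ZZ(X,\ZZ))\otimes\widehat H^r(G,\op{Hom}_\ZZ(\ZZ,X))\to\widehat H^0(G,\op{Hom}_\ZZ(\ZZ,\ZZ))=\widehat H^0(G,\ZZ)$. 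Since $\widehat H^r(G,X)$ and $\widehat H^0(G,\ZZ)$ are both cyclic of order $|G|$ and $x$ generates the former, the homomorphism sending $x\mapsto[{\id_\ZZ}]$ is pulled back to some $x'\in\widehat H^{-r}(G,\op{Hom}_\ZZ(X,\ZZ))$ with $x'\cup x=[{\id_\ZZ}]$.

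Finally I check the other relation $x\cup x'=[{\id_X}]$ in $R$, which then places $x,x'$ into \Cref{prop:better-encoding-pair}(c) for $(X,\ZZ)$ and finishes. Put $a\coloneqq x\cup x'\in R$. The group homomorphism $(-\cup x)\colon R\to\widehat H^r(G,X)$ carries the generator $[{\id_X}]$ of $R$ to $[{\id_X}]\cup x=x$, a generator of the target, so it is a surjection of cyclic groups; as $|R|\mid|G|$ this forces $|R|=|G|$ and $(-\cup x)$ to be an isomorphism, in particular injective. Associativity of the cup product together with $x'\cup x=[{\id_\ZZ}]$ then gives $a\cup x=x\cup(x'\cup x)=x\cup[{\id_\ZZ}]=x=[{\id_X}]\cup x$, so injectivity of $(-\cup x)$ yields $a=[{\id_X}]$. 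The part I expect to be delicate is extracting leverage from the weak hypothesis in (c) that $R$ is merely cyclic: the ring-theoretic fact that $[{\id_X}]$ generates $R$ is exactly what lets the single homomorphism $(-\cup x)\colon R\to\widehat H^r(G,X)$ certify $|R|=|G|$, after which the duality theorem supplies $x'$ and the relation $x\cup x'=[{\id_X}]$ drops out formally.
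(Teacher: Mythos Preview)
Your approach matches the paper's almost exactly: both prove (c)$\Rightarrow$(a) by choosing a generator $x\in\widehat H^r(G,X)$, producing $x'$ with $x'\cup x=[1]$ via the integral duality of \Cref{prop:abstractintegralduality}, then arguing that $(-\cup x)\colon R\to\widehat H^r(G,X)$ is an isomorphism because $[{\id_X}]$ generates the cyclic group $R$ and is sent to the generator $x$, and finally reading off $x\cup x'=[{\id_X}]$ from $(x\cup x')\cup x=[{\id_X}]\cup x$.

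One small slip in your ring argument: the identity $[{\id_X}]^2=[{\id_X}]$ only yields $m^2\ell\equiv m\pmod k$, which does not by itself force $m\ell\equiv1$. What you want is $[{\id_X}]\cdot g=g$, i.e.\ $m\ell g=g$, which does give $m\ell\equiv1\pmod k$ and hence that $[{\id_X}]$ generates $R$. (The paper reaches the same conclusion by a slightly different route: if $k[{\id_X}]=0$ then $kx=k([{\id_X}]\cup x)=0$, so $|G|\mid k$, and since $R$ is cyclic and $|G|$-torsion it is cyclic of order $|G|$ generated by $[{\id_X}]$.)
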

\begin{proof}
	For brevity, set $n\coloneqq|G|$. That (a) implies (b) is not hard: \Cref{cor:compute-cohom-pairs} tells us that $\widehat H^r(G,X)\cong\ZZ/n\ZZ$, and then being an $r$-encoding module promises an isomorphism
	\[\widehat H^0(G,\op{Hom}_\ZZ(X,X))\cong\widehat H^r(G,X)\cong\ZZ/n\ZZ.\]
	Continuing, we see that (b) implies (c) easily, so we have left to show that (c) implies (a).
	
	For this, we use \Cref{prop:abstractintegralduality,prop:better-encoding-pair}. We are given $x\in\widehat H^r(G,X)$ of order $n$, so we note that there is a morphism
	\[\widehat H^r(G,X)\cong\ZZ/n\ZZ=\widehat H^0(G,\ZZ)\]
	sending $x$ to $[1]$. Thus, \Cref{prop:abstractintegralduality} grants $x'\in\widehat H^{-r}(G,\op{Hom}_\ZZ(X,\ZZ))$ such that $x'\cup x=[1]$.
	It remains to check $x\cup x'=[{\id_X}]\in\widehat H^0(G,\op{Hom}_\ZZ(X,X))$. This is more difficult. We claim
	\[(-\cup x)\colon\widehat H^0(G,\op{Hom}_\ZZ(X,X))\to\widehat H^r(G,X)\]
	is injective. This will be enough because
	\[(x\cup x')\cup x=x\cup(x'\cup x)=x\cup[1]=x=[{\id_X}]\cup x.\]
	To see the claim, note $[{\id_X}]\in\widehat H^0(G,\op{Hom}_\ZZ(X,X))$ has order divisible by $n$: if $k[{\id_X}]=0$, then $0=k(x\cup[{\id_X}])=kx$, so $n\mid k$. But $\widehat H^0(G,\op{Hom}_\ZZ(X,X))$ is cyclic and $n$-torsion, so $\widehat H^0(G,\op{Hom}_\ZZ(X,X))$ is cyclic of order $n$ generated by $[{\id_X}]$. Thus, we note that there is a unique morphism
	\[\widehat H^0(G,\op{Hom}_\ZZ(X,X))\cong\ZZ/n\ZZ\cong\widehat H^r(G,X)\]
	sending $[{\id_X}]$ to $1$ to $x$, and this map is an isomorphism. However, $(-\cup x)$ sends $[{\id_X}]$ to $x$ as well, so $(-\cup x)$ is an isomorphism and thus injective.
\end{proof}
    
\begin{example}
	The $\ZZ$-free condition is necessary. Let $G\coloneqq\ZZ/2\ZZ$ act on $X\coloneqq\ZZ/2\ZZ$ trivially so that
	\[\widehat H^0(G,\op{Hom}_\ZZ(X,X))\cong\widehat H^0(G,X)\cong\widehat H^{-1}(G,X)=\ZZ/2\ZZ.\]
	However, $X$ is not a $(-1)$-encoding module because
	\[\widehat H^1(G,\op{Hom}_\ZZ(X,\ZZ))\cong\widehat H^1(G,0)=0\not\cong\ZZ/2\ZZ=\widehat H^0(G,\ZZ).\]
\end{example}
With such a simple condition for $r$-encoding, we can begin to imagine stronger classification results. For example, here is a version of indecomposability.
\begin{corollary} \label{cor:indecomposable}
	Let $G$ be a finite $p$-group. If $A\oplus B$ is a finitely generated $\ZZ$-free $r$-encoding module, then one of $A$ or $B$ is an $r$-encoding module, and the other is cohomologically equivalent to $0$.
\end{corollary}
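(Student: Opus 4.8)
The plan is to extract everything from the single ring $R\coloneqq\widehat H^0(G,\op{Hom}_\ZZ(A\oplus B,A\oplus B))$ together with the fact that a cyclic group of prime-power order is indecomposable. First I would invoke \Cref{prop:finitecohomcheck} (in the direction (a)$\Rightarrow$(b)) for the $\ZZ$-free $r$-encoding module $A\oplus B$ to get $R\cong\ZZ/|G|\ZZ$. Then, using the $G$-module decomposition $\op{Hom}_\ZZ(A\oplus B,A\oplus B)\cong\op{Hom}_\ZZ(A,A)\oplus\op{Hom}_\ZZ(A,B)\oplus\op{Hom}_\ZZ(B,A)\oplus\op{Hom}_\ZZ(B,B)$ together with the fact that $\widehat H^0(G,-)$ commutes with finite direct sums, I would write $R$ as the direct sum of the four corresponding Tate cohomology groups.

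The heart of the argument is that $G$ is a $p$-group, so $R\cong\ZZ/|G|\ZZ$ is indecomposable as an abelian group; hence at most one of those four summands can be nonzero. In particular one of $\widehat H^0(G,\op{Hom}_\ZZ(A,A))$ and $\widehat H^0(G,\op{Hom}_\ZZ(B,B))$ vanishes, and since the statement is symmetric in $A$ and $B$ I may assume $\widehat H^0(G,\op{Hom}_\ZZ(B,B))=0$. By \Cref{prop:bettercohomequivzero} (in the direction (b)$\Rightarrow$(a)) this says exactly that $B$ is cohomologically equivalent to $0$ (equivalently, by \Cref{cor:cohom-zero-to-zg-proj}, that $B$ is $\ZZ[G]$-projective).

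It then remains to see that $A$ itself is an $r$-encoding module. Since $(A,A)$ is a $0$-encoding pair (cohomological equivalence is reflexive) and $(B,0)$ is a $0$-encoding pair, \Cref{lem:encoding-pair-operations}(b) shows that $(A\oplus B,A)$ is a $0$-encoding pair; that is, $A\oplus B$ is cohomologically equivalent to $A$. Combining this $0$-encoding pair with the $r$-encoding pair $(A\oplus B,\ZZ)$ via \Cref{lem:encoding-pair-operations}(d) then yields that $(A,\ZZ)$ is an $r$-encoding pair, i.e.\ $A$ is an $r$-encoding module, which is the desired conclusion.

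I do not expect a genuine obstacle here; the only care needed is bookkeeping of which of $A$, $B$ plays which role, and noticing that the hypothesis that $G$ is a $p$-group is used precisely (and essentially) to make $\ZZ/|G|\ZZ$ indecomposable — for a general $G$ one could, for instance, split $\ZZ/|G|\ZZ$ nontrivially across the two diagonal blocks $\widehat H^0(G,\op{Hom}_\ZZ(A,A))$ and $\widehat H^0(G,\op{Hom}_\ZZ(B,B))$ with neither summand vanishing. The finite-generation hypothesis plays no serious role in this argument beyond keeping $A$ and $B$ inside the relevant class of modules.
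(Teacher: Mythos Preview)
Your argument is correct and close in spirit to the paper's, but the two proofs run in opposite orders and apply the indecomposability of $\ZZ/|G|\ZZ$ to different groups. The paper splits $\widehat H^r(G,A\oplus B)\cong\ZZ/|G|\ZZ$ as $\widehat H^r(G,A)\oplus\widehat H^r(G,B)$, uses the $p$-group hypothesis there to force one summand (say $\widehat H^r(G,A)$) to be all of $\ZZ/|G|\ZZ$, and then feeds this together with the cyclicity of $\widehat H^0(G,\op{Hom}_\ZZ(A,A))$ back into \Cref{prop:finitecohomcheck}(c) to conclude $A$ is $r$-encoding; only afterward does it use the diagonal embedding into $R$ to squeeze $\widehat H^0(G,\op{Hom}_\ZZ(B,B))$ down to $0$. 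You instead split the full ring $R$ into four blocks and use indecomposability there, which immediately kills one diagonal block and hence identifies the trivial summand first; you then recover that the other summand is $r$-encoding purely via cohomological equivalence and \Cref{lem:encoding-pair-operations}, bypassing a second appeal to \Cref{prop:finitecohomcheck}. Your route is arguably tidier in that it never touches $\widehat H^r(G,A)$ or $\widehat H^r(G,B)$ individually; the paper's route is a bit more hands-on and makes explicit which of $A,B$ carries the ``large'' $\widehat H^r$. One small bookkeeping remark: to literally invoke \Cref{lem:encoding-pair-operations}(d) at the end you should first pass from the $0$-encoding pair $(A\oplus B,A)$ to $(A,A\oplus B)$ via part~(a), so that the middle terms match.
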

\begin{proof}
	We use \Cref{prop:finitecohomcheck} repeatedly. On one hand, we have an embedding
	\begin{equation}
	    \widehat H^0(G,\op{Hom}_\ZZ(A,A))\oplus\widehat H^0(G,\op{Hom}_\ZZ(B,B))\into\widehat H^0(G,\op{Hom}_\ZZ(A\oplus B,A\oplus B)), \label{eq:cohom-summands}
	\end{equation}
	so both $\widehat H^0(G,\op{Hom}_\ZZ(A,A))$ and $\widehat H^0(G,\op{Hom}_\ZZ(B,B))$ are cyclic because $\widehat H^0(G,\op{Hom}_\ZZ(A\oplus B,A\oplus B))$ is. On the other hand, we note
	\[\widehat H^r(G,A)\oplus\widehat H^r(G,B)\cong\widehat H^r(G,A\oplus B)\cong\ZZ/|G|\ZZ\]
	forces $\widehat H^r(G,A)\cong\ZZ/|G|\ZZ$ or $\widehat H^r(G,B)\cong\ZZ/|G|\ZZ$ because $G$ is a finite $p$-group.
	
	Thus, one of $A$ or $B$ is an $r$-encoding module. Without loss of generality, say that $A$ is. It remains to show that $B$ is cohomologically equivalent to $0$. Well, $\widehat H^0(G,\op{Hom}_\ZZ(A,A))\cong\ZZ/|G|\ZZ$, so the embedding \eqref{eq:cohom-summands} forces $\widehat H^0(G,\op{Hom}_\ZZ(B,B))=0$, which finishes by \Cref{prop:bettercohomequivzero}.
\end{proof}

\section{Acknowledgements}
This research was conducted at the University of Michigan REU during the summer of 2022. It was supported by NSF DMS-1840234. The author would especially like to thank his advisors Alexander Bertoloni Meli, Patrick Daniels, and Peter Dillery for their eternal patience and guidance, in addition to a number of helpful comments on earlier drafts of this article. Without their advice, this project would have been impossible. The author would also like to thank Maxwell Ye for a number of helpful conversations and consistent companionship. Without him, the author would have been left floating adrift and soulless. Additionally, the author would like to thank the anonymous referee for a careful reading and many detailed comments on an earlier draft.

\bibliographystyle{plain}

\end{document}